\begin{document}

\newtheorem{theorem}{Theorem}
\newtheorem{lemma}[theorem]{Lemma}
\newtheorem{claim}[theorem]{Claim}
\newtheorem{cor}[theorem]{Corollary}
\newtheorem{prop}[theorem]{Proposition}
\newtheorem{example}[theorem]{Example}
\newtheorem{definition}{Definition}
\newtheorem{quest}[theorem]{Question}
\newcommand{\hh}{{{\mathrm h}}}


\def\sssum{\mathop{\sum\!\sum\!\sum}}
\def\ssum{\mathop{\sum\ldots \sum}}
\def\iint{\mathop{\int\ldots \int}}

\def\squareforqed{\hbox{\rlap{$\sqcap$}$\sqcup$}}
\def\qed{\ifmmode\squareforqed\else{\unskip\nobreak\hfil
\penalty50\hskip1em\null\nobreak\hfil\squareforqed
\parfillskip=0pt\finalhyphendemerits=0\endgraf}\fi}

\newfont{\teneufm}{eufm10}
\newfont{\seveneufm}{eufm7}
\newfont{\fiveeufm}{eufm5}
%
%
\newfam\eufmfam
     \textfont\eufmfam=\teneufm
\scriptfont\eufmfam=\seveneufm
     \scriptscriptfont\eufmfam=\fiveeufm
%
%
\def\frak#1{{\fam\eufmfam\relax#1}}

\numberwithin{equation}{section}
\numberwithin{theorem}{section}
\numberwithin{table}{section}

\newcommand{\bflambda}{{\boldsymbol{\lambda}}}
\newcommand{\bfmu}{{\boldsymbol{\mu}}}
\newcommand{\bfxi}{{\boldsymbol{\xi}}}
\newcommand{\bfrho}{{\boldsymbol{\rho}}}

\def\fK{\mathfrak K}
\def\fT{\mathfrak{T}}

\def\fA{{\mathfrak A}}
\def\fB{{\mathfrak B}}
\def\fC{{\mathfrak C}}

\def\eqref#1{(\ref{#1})}

\def\vec#1{\mathbf{#1}}

\def\squareforqed{\hbox{\rlap{$\sqcap$}$\sqcup$}}
\def\qed{\ifmmode\squareforqed\else{\unskip\nobreak\hfil
\penalty50\hskip1em\null\nobreak\hfil\squareforqed
\parfillskip=0pt\finalhyphendemerits=0\endgraf}\fi}

\def\cA{{\mathcal A}}
\def\cB{{\mathcal B}}
\def\cC{{\mathcal C}}
\def\cD{{\mathcal D}}
\def\cE{{\mathcal E}}
\def\cF{{\mathcal F}}
\def\cG{{\mathcal G}}
\def\cH{{\mathcal H}}
\def\cI{{\mathcal I}}
\def\cJ{{\mathcal J}}
\def\cK{{\mathcal K}}
\def\cL{{\mathcal L}}
\def\cM{{\mathcal M}}
\def\cN{{\mathcal N}}
\def\cO{{\mathcal O}}
\def\cP{{\mathcal P}}
\def\cQ{{\mathcal Q}}
\def\cR{{\mathcal R}}
\def\cS{{\mathcal S}}
\def\cT{{\mathcal T}}
\def\cU{{\mathcal U}}
\def\cV{{\mathcal V}}
\def\cW{{\mathcal W}}
\def\cX{{\mathcal X}}
\def\cY{{\mathcal Y}}
\def\cZ{{\mathcal Z}}
\newcommand{\rmod}[1]{\: \mbox{mod} \: #1}

\def\vr{\mathbf r}

\def\e{{\mathbf{\,e}}}
\def\ep{{\mathbf{\,e}}_p}
\def\em{{\mathbf{\,e}}_m}

\def\Tr{{\mathrm{Tr}}}
\def\Nm{{\mathrm{Nm}}}

 \def\SS{{\mathbf{S}}}

\def\lcm{{\mathrm{lcm}}}

\def\({\left(}
\def\){\right)}
\def\fl#1{\left\lfloor#1\right\rfloor}
\def\rf#1{\left\lceil#1\right\rceil}

\def\eps{\varepsilon}
\def\al{\alpha}
\def\be{\beta}
\def\N{\mathbb{N}}
\def\L{\mathbb{L}}

\def\mand{\qquad \mbox{and} \qquad}
\def\mor{\qquad \mbox{or} \qquad}

\newcommand{\commA}[1]{\marginpar{%
\begin{color}{red}
\vskip-\baselineskip 
\raggedright\footnotesize
\itshape\hrule \smallskip A: #1\par\smallskip\hrule\end{color}}}

\newcommand{\commI}[1]{\marginpar{%
\begin{color}{magenta}
\vskip-\baselineskip 
\raggedright\footnotesize
\itshape\hrule \smallskip I: #1\par\smallskip\hrule\end{color}}}

\newcommand{\commM}[1]{\marginpar{%
\begin{color}{blue}
\vskip-\baselineskip 
\raggedright\footnotesize
\itshape\hrule \smallskip M: #1\par\smallskip\hrule\end{color}}}




\hyphenation{re-pub-lished}

\parskip 4pt plus 2pt minus 2pt

\mathsurround=1pt

\def\bfdefault{b}
\overfullrule=5pt

\def \F{{\mathbb F}}
\def \K{{\mathbb K}}
\def \Z{{\mathbb Z}}
\def \Q{{\mathbb Q}}
\def \R{{\mathbb R}}
\def \C{{\mathbb C}}
\def\Fp{\F_p}
\def \fp{\Fp^*}

\def\Kmn{\cK_p(m,n)}
\def\psmn{\psi_p(m,n)}
\def\AI{\cA_p(\cI)}
\def\BIJ{\cB_p(\cI,\cJ)}
\def \xbar{\overline x_p}

\title[Lattices from 
Algebraic Number Fields]{On Distances in Lattices from 
Algebraic Number Fields}

\author{Art\= uras Dubickas}
\address{Department of Mathematics and Informatics, Vilnius University, Naugarduko 24,
LT-03225 Vilnius, Lithuania}
\email{arturas.dubickas@mif.vu.lt}

\author{Min Sha}
\address{School of Mathematics and Statistics, University of New South Wales,
 Sydney, NSW 2052, Australia}
\email{shamin2010@gmail.com}

\author{Igor E. Shparlinski}
\address{School of Mathematics and Statistics, University of New South Wales,
 Sydney, NSW 2052, Australia}
\email{igor.shparlinski@unsw.edu.au}

\begin{abstract} 
In this paper, we study a classical construction of lattices from number fields
and obtain a series of new results about their minimum distance and 
other characteristics by introducing a new measure of algebraic numbers. 
In particular, we show that when the number fields have few complex embeddings, the minimum distances of these lattices can be computed exactly.\end{abstract}

\keywords{Lattice, minimum distance, algebraic number field, Pisot numbers, multinacci number, algebraic unit}
\subjclass[2010]{11H06, 11R04, 11R06, 11R09}

\maketitle

\section{Introduction}

\subsection{Background}

 We first recall a classical and simple construction of lattices from number fields, which is usually applied to show that the class number of a number field is finite~\cite{Lang}. Let $\K$ be a number field
of degree $n$ over the rational numbers $\Q$ and of discriminant $D_{\K}$. We denote by 
$\Z_{\K}$
 the ring of integers of $\K$.  We say that  $\K$ is of {\it signature\/} $(s,t)$
if it has $s$ 
real embeddings, which are assumed to be $\sigma_i: \K \hookrightarrow \R$, $i =1, \ldots, s$ if $s \ge  1$,  and $t$  conjugate pairs of complex embeddings , which are denoted by $\tau_j, \overline\tau_j: \K \hookrightarrow \C$, $j=1, \ldots, t$ if $t \ge 1$. 
Thus, $s + 2t = n$.
Then, the embeddings $\sigma_1,\ldots,\sigma_s$ (if $s\ge 1$) and $\tau_1,\ldots,\tau_t$ (if $t \ge 1$)  naturally yield the vector embedding 
$${\psi \, \colon \, \K
\hookrightarrow \R^{s} \times \C^{t} \cong \R^{n}}, 
$$ 
and we 
can define the $n$-dimensional lattice $\Lambda _{\K} =\psi (\Z_{\K})$, 
see~\cite[Chapter~8, Section~7]{ConSlo}.

Lattices  $\Lambda _{\K}$ are very natural objects which play 
an important role in algebraic number theory and its applications,  
so they have been investigated 
from several different points of view, see, for 
instance,~\cite{B-F1,B-F2,B-F3,B-FOV,Gur,JdACS, Xing} and references therein. 
Besides, the interest to such lattices has also been 
stimulated by the work of Litsyn and Tsfasman~\cite{LiTs} which demonstrates 
that such lattices can be very useful in constructing dense ball packing of 
the Euclidean space  $\R^{s+2t}$; see also~\cite{RoTs,Tsfas} for further developments. 

To begin with, let us recall some simple facts about these lattices. 
For the determinant, it is well known that 
$$
\det \Lambda _{\K} = 2^{-t}|D_{\K}|^{1/2}
$$
(see~\cite[Chapter~V, Section~2, Lemma~2]{Lang}). 

The \textit{minimum distance} $d(\Lambda_{\K})$ of $\Lambda_{\K}$ is the minimum distance between any two distinct lattice points of $\Lambda_{\K}$  and in fact equals the length of the \textit{shortest non-zero vector}, that is,  
$$
d(\Lambda_{\K}) = \min_{\vec{x} \in \Lambda_{\K}\setminus \{\vec{0}\}}  \|\vec{x}\|_\K, 
$$  
where 
$$
 \|\vec{x}\|_\K= \(\sum^{s}_{i=1} x^{2}_{i} +
\sum^{t}_{j=1} (y^{2}_{j} + z^{2}_{j})\)^{1/2}
$$
for
$
\vec{x}= (x_{1}, \ldots\,, x_{s}, y_{1} + z_{1}\sqrt{-1}, \ldots\,,
y_{t} + z_{t}\sqrt{-1}) \in  
\Lambda_{\K}$. 
Notice that if $s=0$, then there are no such coordinates $x_1,\ldots,x_s$; while if $t=0$, then there are no such coordinates corresponding to the complex embeddings. We implicitly admit this fact without special indications throughout the paper. 

For brevity, we  write $\| \alpha\|_\K$ instead of 
$\|\psi(\alpha)\|_\K$ for $\alpha \in \K$;
hence, alternatively, 
$$
d(\Lambda_{\K}) = \min_{\alpha \in \Z_{\K}\setminus \{0\}}  \| \alpha\|_\K.
$$
For simplicity, throughout the paper we  also write 
$$
\| \al \| = \|\al\|_{\Q(\al)},
$$
and call this  quantity the {\it absolute  size\/} of $\alpha$. 
We also call $\|\al\|^2$ the {\it absolute  square size\/} of $\alpha$. 
Similarly, we can define $\|\al\|_{\K}$ and $\|\al\|_{\K}^2$ to be the \textit{relative size} and the \textit{relative square size} 
of $\al$ with respect to $\K$ respectively. 


Tsfasman~\cite[Lemma~1.1(ii)]{Tsfas} has shown that the minimum distance $d(\Lambda_{\K})$ of $\Lambda_{\K}$ satisfies
\begin{equation}
\label{eq:Bound T}
(s/2 + t)^{1/2}   \le d(\Lambda_{\K}) \le (s + t)^{1/2}.
\end{equation}
Here, the upper bound comes from the trivial example $\alpha=1 \in \K$.
The lower bound in~\eqref{eq:Bound T} has been improved in~\cite[Theorem~5.11]{Shp-Fq} 
as follows:
\begin{equation}
\label{eq:Bound S}
d(\Lambda _{\K}) \ge (s2^{-2t/(s+2t)} + t2^{s/(s+2t)})^{1/2}
= 2^{s/(2s+4t)}(s/2 + t)^{1/2}.
\end{equation}

\subsection{New point of view}

It is easy to see that  in the case  $st=0$ the bound in~\eqref{eq:Bound S} together with the upper bound in~\eqref{eq:Bound T}   implies 
$d(\Lambda_{\K}) = (s+t)^{1/2}$. 
This motivates us to define, 
for a number field $\K$ with signature $(s,t)$  and an algebraic number 
$\alpha \in \K$,  the following two normalised quantities: 
$$
m(\K) = \frac{d(\Lambda _{\K})^2 }{s+t} \mand m_{\K}(\alpha)=\frac{\|\alpha\|_{\K}^2}{s+t}.
$$
Clearly, 
\begin{equation}\label{apibr}
m(\K) = \min_{\alpha \in \Z_{\K}\setminus \{0\}} m_{\K}(\alpha). 
\end{equation}
If $\K= \Q(\alpha)$, we simply write 
$$
 m(\alpha) = m_{\Q(\alpha)}(\alpha) = \frac{\|\alpha\|^2}{s+t},
$$ 
and call $m(\alpha)$ the 
{\it absolute normalised square size\/} of $\alpha$, 
otherwise we call it the  {\it relative normalised square size\/} of $\alpha$ (with respect to the field $\K$).  
Similarly, we call $\sqrt{m(\alpha)}$ the 
{\it absolute normalised size\/} of $\alpha$ and call $\sqrt{m_{\K}(\alpha)}$
the  {\it relative normalised size\/} of $\alpha$ with respect to $\K$.

For the convenience of computations, we usually handle $m(\alpha)$ or $m_{\K}(\alpha)$ in our arguments and results. 
This automatically implies related results on $\sqrt{m(\alpha)}$ or $\sqrt{m_{\K}(\alpha)}$.

Below (see Theorem~\ref{thm: mK mQ}), we  show that for any number field $\K$ and any algebraic integer $\alpha \in \K$ we have
$$
m_{\K}(\alpha) \geq \min\{1,m(\alpha)\}, 
$$
when there exists $\beta \in \K$ such that $\K=\Q(\al,\be)$ and the fields $\Q(\al)$ and $\Q(\be)$ are linearly disjoint over $\Q$. 

Note that $m(\alpha)$ is at least $1$ for every {\it reciprocal} algebraic integer $\alpha$ (such that $\al^{-1}$ is its conjugate over $\Q$). Indeed, then $\|\al\|^2 \geq s+t$, since for every pair of conjugates $\al,\al^{-1}$ satisfying $|\al| \ne 1$ we have
$|\alpha|^2+|\alpha|^{-2} > 2$, whereas in the  case $\al$ is a complex number lying on $|z|=1$ the numbers $\al,\al^{-1}$ are complex conjugates, so we only take $|\al|^2=1$ into the sum of $s+t$ squares. Therefore,
unlike in the well-known Lehmer's problem about minimal Mahler measure,  the algebraic integers $\alpha$ satisfying $m(\alpha)<1$ must be non-reciprocal. 
By~\eqref{eq:mal large} below, the smallest value of $m(\alpha)$ is
greater than
$(e \log 2)/2 =0.942084 \ldots$, where $e$ is  the base of the natural logarithm. On the other hand, as we  show in Section~\ref{sec:num},
the smallest value among algebraic integers of degree at most $6$ is
$$m(\zeta) = 
\frac{\zeta^2+\zeta^{-1}}{2}=0.946467\ldots,$$ where 
$\zeta=0.826031\ldots$ is the root of $x^6+x^2-1=0$. 

By  the upper bound in~\eqref{eq:Bound T} and the lower  bound~\eqref{eq:Bound S}, we obtain
\begin{equation}
\label{eq:Bound mK}
\frac{s 2^{-2t/(s+2t)}+t2^{s/(s+2t)}}{s+t} \le  m(\K) \le 1.
\end{equation}
Thus, $m(\K)=1$ if   either $s=0$ or $t=0$, which automatically includes cyclotomic fields. 
In particular, it is interesting to investigate what happens when either $s$ or $t$ 
are small compared to $n$.
Here, we show that the cases of {\it small $s$\/} and of {\it small $t$\/} 
exhibit a  very {\it different\/} behaviour.

\subsection{Outline of the main results}

First, in Section~\ref{sec:small t} we use some other ideas to  show  that  if  $t$ is small compared to $s$ then $m(\K)=1$.  
In particular, by Theorem~\ref{thm:manyreal K}, for any sufficiently large $s$ and 
 for every number field $\K$ of signature $(s,t)$ with 
$$
 t \le   0.096  \sqrt{s/\log s}, 
$$
we have $m(\K)=1$. 

We note that the proof of~\eqref{eq:Bound S}  can be extended to 
the  lower bound 
\begin{equation}
\label{eq:d N} 
  \|\alpha\|_\K \geq \( s2^{-2t/(s+2t)} + t2^{s/(s+2t)}\)^{1/2}\left| \Nm_{\K}(\alpha)\right|^{1/(s+2t)},
\end{equation}
 where $\Nm_{\K}(\alpha)$ is the norm from $\K$ to $\Z$ of  $\alpha \in \K\setminus \{0\}$. 
We present this as Lemma~\ref{lem:d and N} (in a slightly different but equivalent 
form), which is also one of our tools. We also obtain  a lower bound on 
$\|\alpha\|_\K$ of a new type where instead of using the norm we use the 
discriminant of $\alpha$; see  Theorem~\ref{thm:manyreal alpha} below.

In Section~\ref{sec:large t}, we give  
 infinite series of fields $\K$  with $s$ and $t$ of comparable size 
for which $m(\K)<1$. Moreover, in Theorem~\ref{cubic1} we give
an infinite family of fields of signature $(s,s)$ for which 
$$0.944940\ldots  \le  m(\K) \le 0.947279\ldots.$$ In fact, the lower bound 
comes directly from~\eqref{eq:Bound mK}  which demonstrates
that it is quite precise for such signature as well. See also Theorem~\ref{cubic11} for a more precise upper bound when $s$ is even. 

Furthermore, in Section~\ref{sec:small s} 
using a classical result of Erd\H{o}s and Tur\'an~\cite{E--T}, 
we show that even for very small $s$ there are fields with  $m(\K)<1$.
More precisely, in Theorem~\ref{bhu1} 
we give an infinite series of examples of fields $\K=\Q(\al)$ of growing degree
with $s=1,2$ for which $m(\K) \leq m(\al)<1$. 
Moreover, select any non-zero algebraic integer $\al$ with exactly one real conjugate satisfying  $m(\al)<1$. Then, take $\K=\Q(\al,\be)$, where $\be$ is
a totally real algebraic number of degree $s$ over 
$\Q(\al)$ and also over $\Q$. In this way,
by Lemma~\ref{subfield} (see~\eqref{cfrt4}), we obtain $m(\K)<1$ for infinite series of fields $\K=\Q(\al,\be)$ of growing degree
with precisely $s$ real embeddings. 

The most interesting example we find corresponds 
to the case when $s \geq 2$ is even and $n$ is large compared to $s$. 
By~\eqref{eq:Bound mK}, defining 
$$\gamma = s/n
$$ 
we see that the inequality
$$
m(\K)  \geq  \frac{2^{s/n}}{1+s/n}=\frac{2^{\gamma}}{1+\gamma} = 1 -\gamma(1-\log 2) + O(\gamma^2)
$$
holds for any number field $\K$ with signature $(s,(n-s)/2)$, where $n$ is even.
On the other hand, by Theorem~\ref{kiy1} below, for each even $s \geq 2$ and each integer $k \ge 1$ there exists a field $\K$ of degree $n=(2k+1)s$ with signature $(s,(n-s)/2)$ satisfying 
$$
m(\K)  \le  1 -\gamma(1-\log 2) + O(\gamma^{5/4}), 
$$
where (here and elsewhere) the implied constants are absolute, unless stated otherwise.
 To construct this example we take  
the square root of a {\it multinacci number\/} of odd degree
and compute its absolute size.
(The positive root $\vartheta>1$ of the polynomial $x^n-x^{n-1}-\ldots-x-1$ is often called the multinacci number, see, for example,~\cite{sidorov}; although sometimes its reciprocal $\beta=\vartheta^{-1}$ 
is also called the same way~\cite{jordan}.) 
 
In Section~\ref{sec:num}, we give some numerical examples
of fields $\K$ with small degree satisfying  $m(\K)<1$. To conclude, we raise some questions related to this research.

\section{Preliminaries}

\subsection{Bounds of some products}

We often talk about {\it real\/} and {\it complex\/} conjugates of algebraic numbers. 
In this context  we always follow the convention that 
``complex'' means ``complex non-real''. We often apply the inequality of arithmetic and geometric means,  
which we abbreviate as AM-GM.

We now prove~\eqref{eq:d N} in an equivalent and sometimes slightly more convenient form.

\begin{lemma}
\label{lem:d and N}  Given a number field $\K$ of signature $(s,t)$, and an algebraic 
number $\alpha \in \K$, we have
\begin{equation}\label{derf0}
\|\alpha\|_{\K}^2 \geq n 2^{s/n-1}|\Nm_{\K}(\alpha)|^{2/n},
\end{equation}
where $n=s +2t$ is the degree of $\K$. Furthermore, if $\alpha \in \K$ is a non-zero algebraic integer, for $st>0$ we have 
\begin{equation}\label{derf1}
\|\alpha\|_{\K}^2 > n 2^{s/n-1}.
\end{equation}
\end{lemma}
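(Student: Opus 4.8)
The plan is to obtain \eqref{derf0} from a single application of the AM--GM inequality, applied not to the $s+t$ terms that appear in the definition of $\|\alpha\|_{\K}^2$ but to the $n=s+2t$ terms obtained by splitting each complex contribution $|\tau_j(\alpha)|^2$ into two equal halves; this rebalancing is exactly what makes the norm appear with exponent $2/n$.

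First I would fix the embeddings and record two elementary identities. Writing $a_i=\sigma_i(\alpha)\in\R$ for $1\le i\le s$ and $b_j=|\tau_j(\alpha)|^2\ge 0$ for $1\le j\le t$, the definition gives $\|\alpha\|_{\K}^2=\sum_{i=1}^s a_i^2+\sum_{j=1}^t b_j$, while the conjugate pair $\tau_j,\overline{\tau_j}$ contributes $b_j$ to the norm, so that $|\Nm_{\K}(\alpha)|=\prod_{i=1}^s|a_i|\cdot\prod_{j=1}^t b_j$. Next I would rewrite
\[
\|\alpha\|_{\K}^2=\sum_{i=1}^s a_i^2+\sum_{j=1}^t\Bigl(\tfrac12 b_j+\tfrac12 b_j\Bigr)
\]
as a sum of $n$ nonnegative reals and apply AM--GM to get $\|\alpha\|_{\K}^2/n\ge\bigl(\prod_{i=1}^s a_i^2\cdot\prod_{j=1}^t\tfrac14 b_j^2\bigr)^{1/n}$. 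Since $\prod_{i=1}^s a_i^2\cdot\prod_{j=1}^t\tfrac14 b_j^2=4^{-t}\bigl(\prod_{i=1}^s|a_i|\cdot\prod_{j=1}^t b_j\bigr)^2=4^{-t}|\Nm_{\K}(\alpha)|^2$ and $2t/n=1-s/n$, the right-hand side equals $2^{-2t/n}|\Nm_{\K}(\alpha)|^{2/n}=2^{s/n-1}|\Nm_{\K}(\alpha)|^{2/n}$, which gives \eqref{derf0}. I would also note in passing that this is the same inequality as \eqref{eq:d N}, since $n2^{-2t/n}=s2^{-2t/n}+2t\cdot 2^{-2t/n}=s2^{-2t/n}+t2^{s/n}$.

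For \eqref{derf1}, if $\alpha$ is a nonzero algebraic integer then $\Nm_{\K}(\alpha)$ is a nonzero rational integer, so $|\Nm_{\K}(\alpha)|\ge 1$ and \eqref{derf0} already yields $\|\alpha\|_{\K}^2\ge n2^{s/n-1}$; the remaining task is to upgrade this to a strict inequality when $st>0$. Equality would require both $|\Nm_{\K}(\alpha)|=1$ and equality in the AM--GM step, i.e.\ all $n$ summands equal to a common value $c$; since $\alpha\ne 0$ has all conjugates nonzero we have $c>0$, and then $a_i^2=c$ for every $i$ while $b_j=2c$ for every $j$. Hence $1=|\Nm_{\K}(\alpha)|=c^{s/2}(2c)^t=2^t c^{n/2}$, forcing $c=2^{-2t/n}$; as $s\ge 1$ this gives $\sigma_1(\alpha)^2=2^{-2t/n}$ and so $\sigma_1(\alpha)^n=\pm 2^{-t}$. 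But $\sigma_1(\alpha)$ is a root of the minimal polynomial of $\alpha$ over $\Q$, hence an algebraic integer, so $\sigma_1(\alpha)^n$ is an algebraic integer; since $t\ge 1$, the rational number $\pm 2^{-t}$ is not an algebraic integer, a contradiction. Therefore the inequality in \eqref{derf1} is strict.

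I expect the one genuinely delicate point to be this last step: extracting from the equality case the conclusion that some conjugate of $\alpha$ would have to be a root of $2$, hence could not be an algebraic integer. The half-splitting, the AM--GM estimate and the bookkeeping with the exponents are routine; one should only take care to use $\alpha\ne 0$ so that the geometric means stay strictly positive, and to use the hypothesis $st>0$ in both of its parts --- $s\ge 1$, which places a real term $a_1^2$ among the (necessarily equal) summands, and $t\ge 1$, which makes $2^{-t}$ fail to be a rational integer.
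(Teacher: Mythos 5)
Your proof is correct. Both you and the paper rest on AM--GM, but the executions differ in two places worth noting. For \eqref{derf0} the paper applies AM--GM separately to the real and the complex blocks, obtaining $\|\alpha\|_{\K}^2\ge s|X|^{2/s}+t|Y|^{2/t}$ with $X$ and $Y^2$ the real and complex factors of the norm, and then minimises the one-variable function $F(x)=sx^{2/s}+tA^{1/t}x^{-1/t}$ by calculus to extract the constant; your device of splitting each $|\tau_j(\alpha)|^2$ into two equal halves and applying a single AM--GM to all $n$ summands reaches the same constant $2^{s/n-1}$ in one step, dispenses with the optimisation, and handles $st=0$ uniformly (the paper treats that as a separate easy case). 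For the strict inequality \eqref{derf1} both arguments reduce to the same equality configuration --- every real conjugate of modulus $2^{-t/n}$ and $|\Nm_\K(\alpha)|=1$ --- but the endgames differ: the paper argues that $\alpha$ would then be conjugate over $\Q$ to $\pm 2^{-t/n}$, all of whose conjugates lie on the circle $|z|=2^{-t/n}<1$, so it cannot be a nonzero algebraic integer; you instead observe that $\sigma_1(\alpha)^n=\pm 2^{-t}$ would be an algebraic integer equal to a non-integral rational, which is a slightly more self-contained contradiction. Both routes are valid; yours is the leaner write-up of essentially the same underlying estimate.
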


\begin{proof}
We note that~\eqref{derf0} is equivalent to the inequality 
\begin{equation}\label{vienas11}
\|\alpha\|_{\K}^2 \ge n 2^{-2t/n}|\Nm_{\K}(\alpha)|^{2/n}.
\end{equation}

The norm of $\alpha$ in the field $\K$, $\Nm_{\K}(\alpha)$, can be written as $XY^2$, where $X$ is the product
of $\sigma(\alpha)$ over $s$ real embeddings $\sigma$ of $\K$, and
$Y^2$ is the corresponding product over $2t$ complex embeddings. 
For $st=0$, we obtain~\eqref{vienas11} immediately, 
by AM-GM. For instance, when $s=0$, we have 
$t=n/2$ and $|\Nm_{\K}(\alpha)|=Y^2$.  

Assume that $st>0$.
Then, by AM-GM, we obtain 
\begin{equation}\label{vienas14}
\|\alpha\|_{\K}^2 \geq s |X|^{2/s}+t|Y|^{2/t} = s|X|^{2/s}+t|\Nm_{\K}(\alpha)|^{1/t}|X|^{-1/t}.
\end{equation}
For $A>0$, the minimum of the function
$F(x)=sx^{2/s}+tA^{1/t} x^{-1/t}$  on the half line $x > 0$ is attained for 
$x_0 = 2^{-st/(s+2t)} A^{s/(s+2t)}$, 
which is the root of the equation $2x^{2/s} = A^{1/t} x^{-1/t}$,
and thus is equal to 
\begin{align*}
F(x_0) &=sx_0^{2/s}+tA^{1/t}x_0^{-1/t} =  (s+2t) x_0^{2/s} = (s+2t) 2^{-2t/(s+2t)} A^{2/(s+2t)} \\
&  = n2^{-2t/n} A^{2/n} = n2^{s/n-1} A^{2/n}. 
\end{align*}
Using this with $A = |\Nm_{\K}(\alpha)|$ we derive~\eqref{derf0}.

Now, we further assume that $\alpha$ is a non-zero algebraic integer. 
Note that $\Nm_{\K}(\alpha)$ is a non-zero integer. 
Clearly,~\eqref{derf0} implies~\eqref{derf1} in the case $|\Nm_{\K}(\alpha)|\ge 2$. Hence, 
from now on we assume not only that $st>0$ holds but also that $\alpha \in \K$ is a unit, so that
$A=|\Nm_{\K}(\alpha)|= |XY^2| =1$. Then, the 
equality in~\eqref{vienas14} holds if and only if for all $s$ real embeddings $\sigma_i$, $i=1,\ldots,s$, we have 
\begin{equation}\label{kiol1}
|\sigma_i(\alpha)|=|X|^{1/s}, 
\end{equation}
 and for $t$ complex embeddings $\tau_j$, $j=1,\ldots,t$, we have 
\begin{equation}\label{susm}
 |\tau_j(\alpha)|=|Y|^{1/t}=|X|^{-1/(2t)}
 \end{equation}
(then also $|\overline{\tau}_j(\alpha)|=|Y|^{1/t}$).
Furthermore, as above, we see that the function $F(x)$ attains its minimum (and so $\|\alpha\|_{\K}^{2}$ is equal to $n2^{s/n-1}$)
if, in addition to this, 
\begin{equation}\label{kiol3} 
 |X|=x_0=2^{-st/(s+2t)}.
 \end{equation}

In order to prove~\eqref{derf1} it remains to show that at least one of these equalities
 cannot hold.
Indeed, since $s>0$, the number $\alpha$ must have a real conjugate.   
By~\eqref{kiol1} and~\eqref{kiol3}, the modulus of this conjugate 
equals $|X|^{1/s}=2^{-t/(s+2t)}=2^{-t/n}<1$. So, one of the conjugates of $\alpha$ over $\Q$ must be equal to $\pm 2^{-t/n}$. However, such $\alpha$ has all conjugates (real and complex) on the same circle $|z|=2^{-t/n}$, so it is not
an algebraic integer, which leads to a contradiction.   

An alternative proof of the fact that no algebraic integer satisfies \eqref{kiol1}, \eqref{susm} and~\eqref{kiol3} can be given by employing the results of~\cite{sukr} about algebraic numbers whose all conjugates over $\Q$ lie on two circles. 
\end{proof} 

We now record the following useful statement. 

\begin{lemma}
\label{lem:nece}
If $\al$ is an algebraic integer  of degree  $n\le 23$ with
$m(\al)<1$, then $\al$ is an algebraic unit.
\end{lemma}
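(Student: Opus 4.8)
The goal is to show that an algebraic integer $\al$ of degree $n \le 23$ with $m(\al) < 1$ must be a unit, i.e.\ $|\Nm_{\Q(\al)}(\al)| = 1$. The natural approach is to assume $\al$ is not a unit, so that $|\Nm_{\Q(\al)}(\al)| \ge 2$ (it is a non-zero rational integer), and then derive $m(\al) \ge 1$, contradicting the hypothesis. The key input is Lemma~\ref{lem:d and N}: applying~\eqref{derf0} with $\K = \Q(\al)$, signature $(s,t)$, $n = s+2t$, and $|\Nm_{\K}(\al)| \ge 2$ gives
$$
\|\al\|^2 \ge n 2^{s/n - 1} \cdot 2^{2/n} = n 2^{(s+2)/n - 1}.
$$
Dividing by $n = s+t$ when $t \ge 1$ (the case $t=0$ is immediate since then $m(\al)=1$ by~\eqref{eq:Bound mK}), and using $s + t = n - t$, we get
$$
m(\al) = \frac{\|\al\|^2}{s+t} \ge \frac{n}{n-t} \, 2^{(s+2)/n - 1} = \frac{n}{n-t}\, 2^{(n - 2t + 2)/n - 1} = \frac{n}{n-t}\, 2^{2/n}\, 2^{-2t/n}.
$$

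So it suffices to verify that $\frac{n}{n-t}\, 2^{2/n}\, 2^{-2t/n} \ge 1$, equivalently
$$
n \cdot 2^{(2 - 2t)/n} \ge n - t,
$$
for all admissible pairs $(n,t)$ with $1 \le t \le n/2$ and $n \le 23$. The plan is to treat this as a finite check. Writing $\gamma = t/n \in (0, 1/2]$, the inequality becomes $2^{2(1-t)/n} \ge 1 - \gamma$. For fixed $n$ the left side is decreasing in $t$ while the right side is also decreasing in $t$, so a little care is needed; I would simply note that $2^{2(1-t)/n} \ge 2^{-2t/n} \ge 2^{-1} $ is too weak, and instead verify the inequality directly. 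The cleanest route: since $2^{x} \ge 1 + x\log 2$ for all real $x$, we have $n\cdot 2^{2(1-t)/n} \ge n + 2(1-t)\log 2 = n + 2\log 2 - 2t\log 2$, and this exceeds $n - t$ precisely when $2\log 2 + t(1 - 2\log 2) \ge 0$; but $1 - 2\log 2 = -0.386\ldots < 0$, so this linear bound only works for $t \le 2\log 2/(2\log 2 - 1) = 3.59\ldots$, i.e.\ $t \le 3$. Hence the linearization handles $t \in \{1,2,3\}$ for \emph{all} $n$, and the remaining finitely many pairs with $4 \le t \le 11$ and $2t \le n \le 23$ (so $8 \le n \le 23$) are checked one at a time numerically.

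The main obstacle is therefore not conceptual but the boundary of the numerical range: the bound $m(\al) \ge 1$ must hold with room to spare for small $t$, but as $t$ grows toward $n/2$ the factor $2^{-2t/n}$ approaches $2^{-1}$ while $n/(n-t)$ only reaches $2$, so the product approaches $1$ from above and the inequality becomes tight near $n = 23$, $t = 11$. One computes $23 \cdot 2^{(2 - 22)/23} = 23 \cdot 2^{-20/23} = 23 \cdot 0.54676\ldots = 12.575\ldots > 12 = n - t$, so it does hold; and this is essentially the worst case, which is presumably why the threshold is $n = 23$ (for $n = 24$, $t = 12$ the product $24 \cdot 2^{-22/24} = 24 \cdot 0.5297\ldots = 12.71\ldots$ still exceeds $12$, so in fact one would want to recheck whether the bound is genuinely sharp at $23$ or whether equality/failure first occurs slightly later — the constant $23$ in the statement suggests the authors found the first failure at $n = 24$, so I would double-check the $n=24$ case carefully, perhaps it fails for some other value of $t$, or the non-unit of norm $2$ is not the extremal configuration and a sharper analysis of~\eqref{derf0} is needed). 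I would organize the write-up as: (i) reduce to showing the displayed inequality assuming $|\Nm| \ge 2$; (ii) dispose of $t \le 3$ via the convexity bound $2^x \ge 1 + x\log 2$; (iii) tabulate the remaining cases $4 \le t \le 11$.
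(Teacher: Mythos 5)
Your proposal is correct and rests on exactly the same mechanism as the paper's proof: apply Lemma~\ref{lem:d and N} (inequality~\eqref{derf0}) with $|\Nm_{\Q(\al)}(\al)|\ge 2$ for a non-unit, and verify that $\frac{n}{n-t}\,2^{(2-2t)/n}\ge 1$ for all admissible signatures with $n\le 23$. The only difference is how that finite verification is organized: the paper avoids the case split entirely by writing the bound as $\frac{2^y}{1+y}|\Nm|^{2/n}$ with $y=s/n\in[0,1]$, minimizing $2^y/(1+y)$ uniformly to get the constant $(e\log 2)/2=0.942084\ldots$, and then checking the single inequality $0.942084\cdot 2^{2/23}>1$; your route (linearization for $t\le 3$ plus a table for $4\le t\le 11$) reaches the same conclusion but is longer. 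One concrete correction to your commentary: the extremal case is \emph{not} $t$ near $n/2$. The quantity $\frac{n}{n-t}2^{(2-2t)/n}$ is minimized when $s/n$ is near $y_0=1/\log 2-1\approx 0.4427$, i.e.\ near signature $(11,6)$ for $n=23$ (where the value is about $1.0009$, far tighter than your $(1,11)$ computation), and the first genuine failure is at $n=24$ with signature $(10,7)$, where $24\cdot 2^{-1/2}=16.97\ldots<17$ --- which is precisely why the lemma stops at $23$. Since your plan tabulates all cases $4\le t\le 11$ anyway, this does not invalidate the proof, but the table, not the $t=11$ endpoint, is where the real work lies.
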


\begin{proof} 
Assume that $\al$ has $s$ real conjugates and $2t$ complex conjugates. Let $\K=\Q(\alpha)$, which is of signature $(s,t)$. 
Setting $y=s/n \in [0,1]$,  from
\[\frac{n2^{s/n-1}}{s+t}=\frac{2^{s/n}}{2s/n+2t/n}=\frac{2^{s/n}}{1+s/n}=\frac{2^y}{1+y}\]
and Lemma~\ref{lem:d and N}, we find that
\begin{equation} \label{vienas12a}
m(\alpha)=\frac{\|\alpha\|^2}{s+t} \geq 
\frac{2^y}{1+y} |\text{Nm}_{\K}(\alpha)|^{2/n}.
\end{equation}
The smallest value of the function $2^y/(1+y)$ in the interval $[0,1]$ 
occurs at the point $y_0=-1+1/\log 2= 0.442695 \ldots$. The minimum  is equal to $2^{y_0}/(1+y_0)=(e\log 2)/2=0.942084 \ldots$. Since $y_0$ is not a rational number, 
from~\eqref{vienas12a} we have  
\begin{equation}\label{vienas12}
m(\alpha) > \frac{e \log 2}{2}|\text{Nm}_{\K}(\alpha)|^{2/n} > 0.942084 |\text{Nm}_{\K}(\alpha)|^{2/n}.
\end{equation}
Observe that when $\alpha$ is not a
unit and $n \leq 23$, we have
$$0.942084 \cdot |\text{Nm}_{\K}(\alpha)|^{2/n} \geq 0.942084
\cdot  2^{2/23} >1.$$
So, using~\eqref{vienas12} and noticing the assumption $m(\al)<1$, we complete the proof. 
\end{proof} 

We see from  Lemma~\ref{lem:nece} that for number fields $\K$ of degree $n \leq 23$ in the search of the
minimum $m(\K)$ it suffices to consider only units $\alpha$ in $\K$.
We also remark that~\eqref{vienas12} implies that
for any algebraic integer  $\al$, we have 
\begin{equation}\label{eq:mal large}
m(\al) > \frac{e \log 2}{2} =0.942084 \ldots.
\end{equation} 
Moreover, applying the same arguments as in the proof of Lemma~\ref{lem:nece}, 
we can obtain 
\begin{equation}
m({\K}) > \frac{e \log 2}{2} =0.942084 \ldots
\end{equation}
for any number field $\K$.

Note that in the proof of Lemma~\ref{lem:nece}
the variable $y= s/n$ is  rational number and thus it never takes the optimal value $y_0$. In particular, for $n=24$
the critical value of $y$ is $y = 5/12$ (corresponding to fields
of the signature $(10,7)$). Unfortunately, this  is still not enough to
include the value of $n=24$ in Lemma~\ref{lem:nece}. However, this
observation allows us to slightly improve the bound~\eqref{eq:mal large}
for any finite range on $n$.

We need the following inequality due to Schur~\cite[Satz~II]{schur}. 

\begin{lemma}\label{lem:schur}
If $L,x_1,\ldots,x_s$ (where $s \geq 2$) are real numbers satisfying $x_1^2+\ldots+x_s^2 = L$, then
$$
\prod_{1 \leq i<j \leq s} (x_i-x_j)^2 \leq \(\frac{L}{s^2-s}\)^{(s^2-s)/2} \prod_{k=1}^s k^k.
$$
\end{lemma}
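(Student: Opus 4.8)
The plan is to read the left-hand side as the discriminant of the monic polynomial $P(x)=\prod_{i=1}^{s}(x-x_i)$ and to solve the corresponding extremal problem directly. Put $\Phi(x_1,\ldots,x_s)=\prod_{1\le i<j\le s}(x_i-x_j)^2$. If $L=0$ then all $x_i=0$ and both sides vanish (note $(s^2-s)/2\ge 1$ since $s\ge2$), so assume $L>0$. As $\Phi$ is continuous and the sphere $x_1^2+\ldots+x_s^2=L$ is compact, $\Phi$ attains a maximum on it; since $\Phi$ takes strictly positive values there, at a maximiser all the $x_i$ are distinct, so Lagrange multipliers apply. Taking logarithmic derivatives of $\Phi$ gives
\[
\sum_{j\ne k}\frac{1}{x_k-x_j}=\lambda x_k\qquad(k=1,\ldots,s)
\]
for a common multiplier $\lambda$. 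Multiplying the $k$-th equation by $x_k$, summing over $k$, and grouping the terms attached to each unordered pair $\{k,j\}$ (each such pair contributing $1$) yields $\lambda L=\binom{s}{2}$, hence $\lambda=\tfrac{s(s-1)}{2L}>0$.

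Next I would turn these equations into a differential equation. Writing $P(x)=(x-x_k)\prod_{j\ne k}(x-x_j)$ one gets $P''(x_k)/P'(x_k)=2\sum_{j\ne k}(x_k-x_j)^{-1}=2\lambda x_k$, so the polynomial $P''(x)-2\lambda xP'(x)$ — of degree $s$ with leading coefficient $-2\lambda s\ne0$ — vanishes at the $s$ distinct points $x_1,\ldots,x_s$ and therefore equals $-2\lambda s\,P(x)$. Thus $P''-2\lambda xP'+2\lambda sP=0$, and the substitution $y=\sqrt{\lambda}\,x$ brings this to the classical Hermite equation $w''-2yw'+2sw=0$ for $w(y)=P(y/\sqrt{\lambda})$. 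Since the only polynomial solutions of this equation are scalar multiples of the $s$-th (physicists') Hermite polynomial $H_s$, the extremal tuple $(x_1,\ldots,x_s)$ is, up to permutation, $(r_1/\sqrt{\lambda},\ldots,r_s/\sqrt{\lambda})$, where $r_1,\ldots,r_s$ are the roots of $H_s$.

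Finally I would substitute two classical facts about $H_s(x)=2^{s}x^{s}-2^{s-2}s(s-1)x^{s-2}+\cdots$: by Newton's identities its roots satisfy $\sum_i r_i^2=\tfrac{s(s-1)}{2}$ (consistent with $\lambda L=\binom{s}{2}$ and with $\sum_i(r_i/\sqrt{\lambda})^2=L$), and $\mathrm{disc}(H_s)=2^{3s(s-1)/2}\prod_{k=1}^{s}k^{k}$, whence $\prod_{i<j}(r_i-r_j)^2=2^{-s(s-1)/2}\prod_{k=1}^{s}k^{k}$ after dividing by $(2^{s})^{2s-2}$. Combining, at the maximiser
\[
\prod_{1\le i<j\le s}(x_i-x_j)^2=\lambda^{-s(s-1)/2}\prod_{i<j}(r_i-r_j)^2=(2\lambda)^{-s(s-1)/2}\prod_{k=1}^{s}k^{k}=\left(\frac{L}{s^2-s}\right)^{(s^2-s)/2}\prod_{k=1}^{s}k^{k},
\]
using $2\lambda=(s^2-s)/L$; this is exactly the asserted bound, and the argument shows it is sharp. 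I expect the main obstacle to be the evaluation of $\mathrm{disc}(H_s)$: this is standard in the theory of orthogonal polynomials and can be quoted, but a self-contained derivation would combine the standard expression of $\mathrm{disc}(H_s)$ as a power of the leading coefficient times $\prod_i H_s'(r_i)$ with the values of $H_s'(r_i)$ furnished by the three-term recurrence $H_{n+1}=2xH_n-2nH_{n-1}$. A minor point needing a line of care is that the constrained maximum is attained only at these (scaled, permuted) Hermite configurations, which holds because the Lagrange relations force the single value $\lambda=\tfrac{s(s-1)}{2L}$.
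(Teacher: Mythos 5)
Your proposal is correct. Note that the paper itself does not prove this lemma at all: it is quoted verbatim from Schur's 1918 paper (Satz~II), so there is no internal argument to compare against. What you have written is essentially a reconstruction of the classical extremal argument (due to Schur, and in spirit to Stieltjes' electrostatic interpretation): compactness gives a maximiser with distinct coordinates, the Lagrange conditions identify $\lambda=\tfrac{s(s-1)}{2L}$ and convert into the second-order ODE $P''-2\lambda xP'+2\lambda sP=0$, whose unique polynomial solution up to scaling is the rescaled Hermite polynomial, and the known values $\sum_i r_i^2=\tfrac{s(s-1)}{2}$ and $\operatorname{disc}(H_s)=2^{3s(s-1)/2}\prod_{k=1}^{s}k^k$ then give exactly the stated bound. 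I checked the individual steps: the pairing argument giving $\lambda L=\binom{s}{2}$, the identity $P''(x_k)/P'(x_k)=2\sum_{j\ne k}(x_k-x_j)^{-1}$, the degree/leading-coefficient count forcing $P''-2\lambda xP'=-2\lambda s\,P$, the uniqueness of the polynomial solution of the Hermite equation (which follows from the two-term coefficient recurrence), and the normalisations $\prod_{i<j}(r_i-r_j)^2=2^{-s(s-1)/2}\prod_k k^k$ — all are right, and the $L=0$ case is handled. The only external input is the Hermite discriminant, which you correctly flag as standard and sketch how to derive from $H_s'=2sH_{s-1}$ and the three-term recurrence; quoting it is entirely acceptable. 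A small bonus of your route over a bare citation is that it exhibits the equality case (scaled Hermite roots), which explains why the constant $\prod_k k^k$ is the right one.
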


We also need a bound on the product of Lemma~\ref{lem:schur}
which is given by~\cite[Equation~(15)]{schur} (but here we briefly sketch 
a proof).

\begin{lemma}\label{lem:prod}
We have
$$
\prod_{k=1}^s k^k =  s^{(s^2+s)/2+1/12} e^{-s^2/4 +O(1)}.
$$
\end{lemma}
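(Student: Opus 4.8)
The plan is to apply the Euler--Maclaurin summation formula to the sum $\log \prod_{k=1}^s k^k = \sum_{k=1}^s k\log k$, which is exactly the approach indicated by the citation to Schur. First I would write $S(s) = \sum_{k=1}^s k\log k$ and recall that Euler--Maclaurin gives
\[
S(s) = \int_1^s x\log x\,dx + \tfrac12\bigl(s\log s + 1\cdot\log 1\bigr) + \tfrac{1}{12}\bigl(f'(s) - f'(1)\bigr) + R,
\]
where $f(x) = x\log x$, so $f'(x) = \log x + 1$, and $R$ is the remainder term involving $f'''$. Since $f'''(x) = -1/x^2$ is integrable on $[1,\infty)$, the remainder $R$ is bounded uniformly in $s$, i.e. $R = O(1)$.

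Next I would evaluate the pieces. The integral is $\int_1^s x\log x\,dx = \tfrac{s^2}{2}\log s - \tfrac{s^2}{4} + \tfrac14$. The midpoint/endpoint term $\tfrac12 s\log s$ contributes, and $\tfrac{1}{12}f'(s) = \tfrac{1}{12}\log s + \tfrac{1}{12}$, while the $f'(1)$ piece is a constant absorbed into $O(1)$. Collecting the $\log s$ coefficients: $\tfrac{s^2}{2} + \tfrac{s}{2} + \tfrac{1}{12}$, which is precisely $\tfrac{s^2+s}{2} + \tfrac{1}{12}$; collecting the rest gives $-\tfrac{s^2}{4} + O(1)$ (the $\tfrac14$ and the various constant terms all being $O(1)$). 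Hence
\[
\log \prod_{k=1}^s k^k = \Bigl(\tfrac{s^2+s}{2} + \tfrac{1}{12}\Bigr)\log s - \tfrac{s^2}{4} + O(1),
\]
and exponentiating yields the claimed formula $\prod_{k=1}^s k^k = s^{(s^2+s)/2 + 1/12} e^{-s^2/4 + O(1)}$.

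The only point requiring a little care is the control of the Euler--Maclaurin remainder, but since we only need an $O(1)$ error term rather than an asymptotic expansion, it suffices to note that a single application of the formula (say, through the first Bernoulli correction term, with the remainder expressed via the periodic Bernoulli function $B_2$ or $B_3$) leaves an error of the form $\int_1^s g(x)\,dx$ with $g$ absolutely integrable on $[1,\infty)$, hence bounded. Alternatively, and perhaps more cleanly, one can simply quote Stirling-type estimates: writing $\sum_{k=1}^s k\log k$ via summation by parts in terms of $\sum_{k=1}^s \log k! = \sum \log \Gamma(k+1)$ and invoking Stirling's formula, one arrives at the same expression. Either route is routine; I would present the Euler--Maclaurin computation in two or three lines and leave the constant implicit, exactly as the phrase ``we briefly sketch a proof'' in the excerpt suggests.
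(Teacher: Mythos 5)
Your proof is correct and takes essentially the same approach as the paper: both compute $\log\prod_{k=1}^s k^k=\sum_{k=1}^s k\log k$ via the Euler--Maclaurin formula, the only difference being that the paper first factors out $\log s$ and applies Euler--Maclaurin to the normalised sum $\sigma=\frac{1}{s}\sum_{k=2}^s\frac{k}{s}\log\frac{k}{s}$, whereas you apply it directly to $f(x)=x\log x$ on $[1,s]$. Your bookkeeping of the $\log s$ coefficients ($\tfrac{s^2}{2}+\tfrac{s}{2}+\tfrac{1}{12}$ from the integral, the trapezoidal endpoint term, and the $\tfrac{1}{12}f'(s)$ correction) and your control of the remainder via the integrability of $f'''(x)=-x^{-2}$ are both sound.
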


\begin{proof} First we write 
$$
\sum_{k=1}^s k\log k =  \sum_{k=2}^s k \(\log s+ \log (k/s)\)
=  \frac{(s-1)(s+2)}{2} \log s + s^2 \sigma  ,
$$
where 
$$
\sigma =\frac{1}{s}  \sum_{k=2}^s\frac{k}{s}   \log \frac{k}{s}.
$$
Now, using the Euler-Maclaurin summation formula one easily 
derives
that 
$$
\sigma =\int_{1/s}^1 x \log x \> dx + \frac{7}{12}s^{-2}\log s + O(s^{-2}) = -\frac{1}{4} + \frac{13}{12}s^{-2}\log s + O(s^{-2}),
$$
which concludes the proof. 
\end{proof}

\subsection{Distribution of roots of some polynomials}

The next result is due to Erd\H{o}s and Tur\'an~\cite[Theorem~I]{E--T}.

\begin{lemma}\label{ertur}
Let $N_P(\phi, \varphi)$ be the number of roots of 
a complex polynomial
$P(x)=a_d x^d+ \ldots + a_0$, where $a_d a_0 \ne 0$, 
whose arguments belong to the interval 
$[\phi, \varphi) \subseteq [0,2\pi)$. Then,
$$
\left|N_P(\phi, \varphi)-\frac{\varphi-\phi}{2\pi} d\right|
 \leq 16 \sqrt{d\log \(L(P)/\sqrt{|a_d a_0|}\)},
$$
where $L(P)=|a_0|+ \ldots + |a_d|$. 
\end{lemma}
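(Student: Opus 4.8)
Since the stated inequality is the classical theorem of Erd\H{o}s and Tur\'an, the most economical option is simply to invoke~\cite{E--T}; for the record, here is the line of argument I would reconstruct, which is the standard Fourier-analytic one. Scaling $P$ by a nonzero constant changes neither its zeros nor the ratio $L(P)/\sqrt{|a_d a_0|}$, so I would normalise $a_d=1$, write the zeros as $z_j=r_j e^{i\theta_j}$ for $1\le j\le d$, and set $T_n=\sum_{j=1}^{d}e^{in\theta_j}$. The first step is the Erd\H{o}s--Tur\'an discrepancy inequality for a finite point set on the circle: for every integer $m\ge 1$,
$$
\Bigl|N_P(\phi,\varphi)-\frac{\varphi-\phi}{2\pi}\,d\Bigr|\;\ll\;\frac{d}{m}+\sum_{n=1}^{m}\frac{|T_n|}{n}.
$$
I would prove this by sandwiching the indicator function of the arc $\{e^{i\alpha}\colon\alpha\in[\phi,\varphi)\}$ between two trigonometric polynomials of degree $\le m$ at $L^{1}$-distance $O(1/m)$ from it (the Fej\'er kernel already gives an absolute constant; to chase the explicit constant $16$ one uses the Beurling--Selberg extremal majorant and minorant) and integrating against the counting measure of the arguments $\theta_1,\dots,\theta_d$.

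The decisive second step is a bound for $T_n$ in terms of the coefficients of $P$. For a zero with $r_j\le 1$ I would compare $e^{in\theta_j}$ with $z_j^{\,n}$, and for $r_j>1$ with $\overline{z_j}^{\,-n}$: in either case the replacement has argument $\theta_j$ and modulus at most $1$, and $|1-r^{\pm n}|\le n|\log r|$, so
$$
\Bigl|\,T_n-\Bigl(\sum_{r_j\le 1}z_j^{\,n}+\sum_{r_j>1}\overline{z_j}^{\,-n}\Bigr)\Bigr|\;\le\; n\sum_{j=1}^{d}\bigl|\log r_j\bigr|.
$$
Here $\sum_j|\log r_j|\le 2\log\bigl(L(P)/\sqrt{|a_d a_0|}\bigr)=:2R$, which I would deduce from Jensen's formula together with $\prod_j\max(1,r_j)\le L(P)$ (Mahler's inequality, using $a_d=1$), $\prod_j r_j=|a_0|$, and $L(P)\ge 1+|a_0|\ge 2\sqrt{|a_0|}$. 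The remaining ``main part'' $\sum_{r_j\le1}z_j^{\,n}+\sum_{r_j>1}\overline{z_j}^{\,-n}$ is an exponential sum with the moduli $r_j^{\pm n}$ as weights; it equals, up to a factor proportional to $n$, the $n$-th Fourier coefficient of $\log|P(e^{i\alpha})|-\log M(P)$ (with $M(P)=\prod_j\max(1,r_j)$ the Mahler measure), and I would bound $\sum_{n\le m}$ of it by Cauchy--Schwarz against the $L^2$-size of $\log|P(e^{i\alpha})|-\log M(P)$ on the circle --- it is precisely this Fourier/coefficient input that supplies the cancellation among the phases $e^{in\theta_j}$ that the trivial estimate misses.

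Choosing the cut-off $m\asymp\sqrt{d/R}$ then makes both $d/m$ and $\sum_{n\le m}|T_n|/n$ of size $\ll\sqrt{dR}$, which is the asserted bound $\ll\sqrt{d\log\bigl(L(P)/\sqrt{|a_d a_0|}\bigr)}$.

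The step I expect to be the real obstacle is this exponential-sum estimate. The trivial bound $|T_n|\le d$ is worthless here --- plugged into the discrepancy inequality it yields only $O(d\log d)$ --- so one must genuinely exploit that angular clustering of the zeros is \emph{forced} to inflate $L(P)$ relative to $\sqrt{|a_0 a_d|}$; this is the mechanism that produces the factor $\log\bigl(L(P)/\sqrt{|a_0 a_d|}\bigr)$, and making the cancellation in the main part interact cleanly with the crude error term $n\sum_j|\log r_j|$ --- without losing a spurious logarithm --- is the technical heart of the proof. The second, more bookkeeping-heavy, difficulty is extracting the explicit constant $16$: this needs the near-optimal trigonometric majorant and minorant in the first step, together with careful tracking of constants through the Cauchy--Schwarz step and the optimisation in $m$.
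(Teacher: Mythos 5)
The paper gives no proof of this lemma at all: it is stated as a known result with the attribution ``due to Erd\H{o}s and Tur\'an~\cite[Theorem~I]{E--T}'' (with Ganelius's sharper constant mentioned afterwards), which is exactly your first and ``most economical'' option. Your appeal to the citation therefore matches the paper's treatment, and the Fourier-analytic sketch you append is a reasonable outline of the standard argument but is not needed here.
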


Ganelius~\cite{Gan} had replaced the constant $16$ by the smaller constant $\sqrt{2\pi/G}=2.619089\ldots$, where $G=\sum_{j=1}^{\infty}(-1)^{j+1}/(2j-1)^2$ is {\it Catalan's constant\/}. 

Recall that a Pisot number is a real algebraic integer greater than 1 and all its conjugates lie inside the open unit disk. 
By definition, if a monic integer polynomial defines a Pisot number, then it is necessarily irreducible. 

It has been proved that the polynomial $x^n-x^{n-1}-\ldots -1$ defines a Pisot number, which is located in the interval $(1,2)$; see~\cite{Miles}. 
Here, we need to know more precisely about the locations of all its roots.

\begin{lemma}\label{bhu0}
Fix $n \geq 2$. Then, the  irreducible polynomial
$$f(x)=x^n-x^{n-1}-x^{n-2}- \ldots-1$$
defines a Pisot number $\vartheta$ that lies in the interval
\begin{equation}\label{didz}
2n/(n+1)<\vartheta<2.
\end{equation}
It has another real conjugate $\omega$ 
if and only if $n$ is even, and this second real conjugate lies in the interval 
\begin{equation}\label{didz1}
-1<\omega<-3^{-1/n}. 
\end{equation}
Moreover, the other conjugates of $\vartheta$ are non-real and all lie in the annulus
\begin{equation}\label{didz2}
3^{-1/n}<|z|<1.\end{equation}
\end{lemma}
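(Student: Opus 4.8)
The plan is to treat the three claims --- the location of the Pisot root $\vartheta$, the existence and location of the second real root $\omega$, and the annulus containing all remaining roots --- by exploiting the factorisation identity $(x-1)f(x) = x^{n+1} - 2x^n + 1$, which is much easier to analyse than $f$ itself. Write $g(x) = x^{n+1} - 2x^n + 1$, so that the roots of $f$ are exactly the roots of $g$ other than $x=1$ (and $1$ is a simple root of $g$ since $g'(1) = (n+1) - 2n = 1-n \neq 0$ for $n\ge 2$). Irreducibility of $f$ and the fact that $\vartheta$ is a Pisot number are already known from~\cite{Miles}, so I only need the quantitative location statements.

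\emph{Step 1: the interval for $\vartheta$.} Since $\vartheta$ is the unique root of $f$ in $(1,\infty)$, it suffices to evaluate $f$ (or $g$) at the endpoints. For the upper bound, $f(2) = 2^n - (2^{n-1}+\dots+1) = 2^n - (2^n-1) = 1 > 0$, and $f$ is increasing past its largest root, so $\vartheta < 2$. For the lower bound I would show $f(2n/(n+1)) < 0$; equivalently, using $g$, show $g(2n/(n+1)) > 0$ has the wrong sign, i.e. work out the sign of $g$ at $c = 2n/(n+1)$. Here $g(c) = c^n(c - 2) + 1 = -\tfrac{2}{n+1}c^n + 1$, so $g(c) < 0$ iff $c^n > (n+1)/2$, i.e. iff $(2n/(n+1))^n > (n+1)/2$. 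This last inequality is elementary: $(2n/(n+1))^n = 2^n/(1+1/n)^n > 2^n/e$, and $2^n/e > (n+1)/2$ holds for all $n\ge 2$ (check small cases, then note the left side grows geometrically). Since $g(c)$ and $f(c)$ have the same sign at $c>1$ (as $c-1>0$), this gives $f(c)<0$, hence $\vartheta > c = 2n/(n+1)$.

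\emph{Step 2: the second real root.} A real root $\omega$ of $f$ with $\omega \neq \vartheta$ must be negative (there is no root in $[0,1]$ since $f(0) = -1 < 0$, $f(1) = 1-n < 0$ — wait, $f(1) = 1 - n \le -1 < 0$, and $f$ has no sign change on $[0,1]$; one can check $f<0$ on all of $[0,1)$ directly, and $f(1)<0$). For $x<0$ look at $g(x) = x^{n+1} - 2x^n + 1$. If $n$ is odd, $n+1$ is even: $g(x) = x^n(x-2) + 1$ with $x^n < 0$ and $x - 2 < 0$, so $g(x) > 1 > 0$ for all $x<0$, hence $f$ has no negative root; combined with $x=1$ being the only nonnegative root of $g$ besides $\vartheta$, this shows $f$ has no real root other than $\vartheta$ when $n$ is odd. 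If $n$ is even, $g(x) = x^n(x-2)+1$ with $x^n>0$, $x-2<0$, so $g(x) = 1 - x^n(2-x)$; as $x\to -\infty$ this $\to -\infty$, at $x=0$ it is $1>0$, so there is a negative root. To pin it into $(-1,-3^{-1/n})$: at $x=-1$, $g(-1) = -1 - 2 + 1 = -2 < 0$ (careful: $n+1$ odd so $(-1)^{n+1}=-1$, $(-1)^n = 1$, giving $-1 - 2\cdot 1 + 1 = -2$), and at $x = -3^{-1/n}$, $g = x^n(x-2) + 1 = 3^{-1}(-3^{-1/n} - 2) + 1 = 1 - \tfrac{1}{3}(2 + 3^{-1/n}) = \tfrac{1}{3}(1 - 3^{-1/n}) > 0$. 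Since $g$ changes sign from $-$ to $+$ across this interval there is a root $\omega \in (-1, -3^{-1/n})$, and one checks it is the only negative root (e.g. $g$ is monotone there, or just note $f$ has at most $\deg f - 1$ non-Pisot roots and the annulus argument below accounts for the rest — cleaner to argue $g'$ has at most one sign change on $(-\infty,0)$).

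\emph{Step 3: the annulus $3^{-1/n} < |z| < 1$ for the non-real roots.} The outer bound $|z|<1$ is immediate from the Pisot property. For the inner bound, let $z$ be any conjugate of $\vartheta$ other than $\vartheta$ itself (so $|z|<1$, in particular $z\ne 1$), hence $g(z)=0$, i.e. $z^n(z-2) = -1$, so $|z|^n|z-2| = 1$. Since $|z|<1$ we have $|z-2| \le |z| + 2 < 3$, therefore $|z|^n = 1/|z-2| > 1/3$, giving $|z| > 3^{-1/n}$. This handles \emph{all} non-Pisot conjugates, real or complex, in one stroke (so it also reconfirms $\omega > -3^{-1/n}$... wait, it gives $|\omega| > 3^{-1/n}$ i.e. $\omega < -3^{-1/n}$, consistent). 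That every such root with $|z|<1$ and $z$ nonreal does occur is automatic once we know $f$ has exactly one root $>1$, exactly one (when $n$ even) or zero (when $n$ odd) real roots in $(-1,0)$, and the remaining $n-2$ (resp. $n-1$) roots are therefore non-real and in $|z|<1$.

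\emph{Main obstacle.} The genuinely fiddly part is \emph{uniqueness} of the real roots — ruling out additional negative roots of $f$ so that the root count in Step 3 is exactly right. The cleanest route is to show $g(x) = x^n(x-2)+1$ is strictly monotone on the relevant portion of the negative axis, or to directly bound the number of sign changes of $g$ on $(-\infty,0]$ using Descartes' rule of signs applied to $g(-x)$: $g(-x) = (-1)^{n+1}x^{n+1} - 2(-1)^n x^n + 1$, whose coefficient sign pattern has exactly one or two sign changes depending on the parity of $n$, matching the claimed count. The endpoint evaluations in Steps 1–2 and the estimate $2^n/e > (n+1)/2$ are routine and I would not belabour them. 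Everything else follows from the single substitution $f(x) = g(x)/(x-1)$ and the relation $|z|^n|z-2|=1$.
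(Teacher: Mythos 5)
Your proposal is correct and follows essentially the same route as the paper: the factorisation $(x-1)f(x)=g(x)=x^{n+1}-2x^n+1$, sign evaluations of $g$ at $2$, $-1$ and $-3^{-1/n}$, and the identity $|z|^n\,|z-2|=1$ for the inner radius of the annulus are all exactly the paper's argument. Two small points of divergence. First, for the lower bound $\vartheta>2n/(n+1)$ the paper does not evaluate $g$ at $c=2n/(n+1)$ directly; it observes that $g'(x)=((n+1)x-2n)x^{n-1}$ vanishes at $x_0=c$, so $g$ is decreasing on $(1,x_0)$ and hence $g(x_0)<g(1)=0$ with no computation. Your route instead requires the inequality $(2n/(n+1))^n>(n+1)/2$, and your intermediate bound $2^n/e>(n+1)/2$ is actually \emph{false} at $n=2$ (since $4/e\approx 1.47<1.5$), even though the target inequality holds there ($(4/3)^2=16/9>3/2$); so the ``check small cases'' you wave at is genuinely needed, and the paper's derivative argument avoids the issue entirely. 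The same monotonicity of $g$ on $(-\infty,0)$ also settles the uniqueness of the negative root, which is the point you flagged as fiddly. Second, you import the Pisot property (hence $|z|<1$ for the remaining roots) from \cite{Miles}, whereas the paper re-derives it via Rouch\'e's theorem applied to $a^{n+1}+1<2a^n$ for $1<a<\vartheta$; your shortcut is legitimate given the citation, but makes Step 3 non-self-contained.
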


\begin{proof}
 Set
$$g(x)=f(x)(x-1)=x^{n+1}-2x^n+1.$$
Note that the derivative $g'(x)=((n+1)x-2n)x^{n-1}$ is zero at $x=0$ and at 
$x_0=2n/(n+1) \in (1,2)$. So, $g$ is decreasing in $(-\infty,x_0)$,
and increasing in $(x_0,+\infty)$ for $n$ odd. Similarly, for  $n$ even, 
$g$ is increasing in $(-\infty,0)$, decreasing in $(0,x_0)$, and then increasing in $(x_0,+\infty)$.  
Besides, $g(1)=0$ and $g(2)=1>0$. So, $g$ has a unique real root, say $\vartheta$, in the interval $(x_0,2)$, which gives~\eqref{didz}. Also, $g$ has no real roots in $[2,+\infty)$. Furthermore, we can see that for any real $a$ with $1<a<\vartheta$, we have $g(a)<0$, that is $a^{n+1}+1< 2a^n$. Thus, by {\it Rouch\'e's theorem\/}, 
$g$ has exactly $n$ roots in the disc $|z|<a$, and so $f$ has exactly $n-1$ roots in the disc $|z|<a$. 
Hence, all the other roots of $f$ lie in the unit disc $|z| \le1$.
It is clear that $f$ has no roots on the circle $|z| =1$,  so $\vartheta$ is a Pisot number.
 
In addition, $g(-1)$ is negative for $n$ even 
and positive for $n$ odd. Thus, in view of $g(0)>0$, the 
polynomial $g$ (and so $f$) has a unique negative root, say $\omega$, in  $(-1,0)$ for $n$ even, and no negative 
roots for $n$
odd. Now, by looking at the values of $g$ at the endpoints of the interval
$[-1,-3^{-1/n}]$, one easily gets~\eqref{didz1}.  

Let  $\vartheta_1, \ldots,\vartheta_{n-1}$ be all roots of $f$ in the disk $|z| < 1$. 
Then, from $g(\vartheta_j)=\vartheta_j^{n+1}-2\vartheta_j^n+1=0$ and $|\vartheta_j|<1$ for each $j=1, \ldots,n-1$, it follows that $|\vartheta_j^n|=1/|2-\vartheta_j|>1/3$.
This implies~\eqref{didz2} and concludes the proof. 
\end{proof}

We also need to prove the irreducibility of certain polynomials. 

\begin{lemma}\label{auxil2}
The polynomial $x^{n}+x^{n-2}+x^{n-4}+\ldots+x^2-1$, where $n=4k+2 \in \Z$, $k \ge 1$, is irreducible over $\Q$ and has precisely two real roots. 
\end{lemma}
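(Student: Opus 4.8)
The plan is to handle the two assertions — irreducibility and exactly two real roots — separately, starting with the easier geometric count of real roots and then using the location of the roots to force irreducibility. Write $n = 4k+2$ and set
$$
h(x) = x^{n} + x^{n-2} + \ldots + x^{2} - 1 = \sum_{j=1}^{2k+1} x^{2j} - 1.
$$
Since $h$ is an even polynomial, it is convenient to substitute $u = x^2$ and study
$$
H(u) = u^{2k+1} + u^{2k} + \ldots + u - 1 = \frac{u^{2k+2} - 2u + 1}{u-1},
$$
so that $h(x) = H(x^2)$. The real roots of $h$ correspond to the \emph{positive} real roots of $H$: for $u>0$, $H$ is strictly increasing (all coefficients of $H$ except the constant term are positive and $u\mapsto u^j$ is increasing on $u>0$), with $H(0) = -1 < 0$ and $H(1) = 2k > 0$, so $H$ has exactly one positive root $u_0 \in (0,1)$. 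Hence $h$ has exactly two real roots, namely $\pm\sqrt{u_0}$, both lying in $(-1,1)$. (One can pin down $u_0$ more precisely from $u_0^{2k+2} = 2u_0 - 1$, which in particular forces $u_0 > 1/2$, but this is not needed for the statement.)

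For irreducibility, the main idea is to first understand the full root distribution of $H$, mimicking the Rouché argument in Lemma~\ref{bhu0}. From $H(u)(u-1) = u^{2k+2} - 2u + 1$, any root $u$ of $H$ with $|u|<1$ satisfies $u^{2k+2} = 2u - 1$, so $|u|^{2k+2} = |2u-1| \le 2|u| + 1 < 3$; combined with the reverse estimate $|u|^{2k+2} = |2u-1| \ge 1 - 2|u|$ one sees the non-real roots of $H$ cluster near $|u| = 1$ from inside, while Rouché applied to $u^{2k+2}$ versus $-2u+1$ on a circle $|u| = a$ for $a$ slightly below the unique positive root shows $H$ has exactly $2k$ roots strictly inside the unit disc and one root $u_0 \in (0,1)$; the remaining root of $u^{2k+2}-2u+1$ is $u=1$, which is cancelled, so all $2k+1$ roots of $H$ lie in $|u| \le 1$, with exactly one of them real and positive and the other $2k$ non-real. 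Consequently the $n = 2(2k+1)$ roots of $h$ are: the two real roots $\pm\sqrt{u_0}$ in $(-1,1)$, and $4k$ non-real roots, all of modulus $<1$ except possibly on $|z|=1$ — and $h$ has no roots on $|z|=1$ since such a root would give $|u|=1$ with $u^{2k+2} = 2u-1$, impossible as $|2u-1|=1$ on $|u|=1$ only at $u=1$, which is not a root of $H$.

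Now suppose $h = h_1 h_2$ is a nontrivial factorization into monic integer polynomials. The product of the absolute values of the roots of $h_i$ is $|h_i(0)| \in \Z_{\ge 1}$; since every root of $h$ has modulus $\le 1$ and the product over all roots is $|h(0)| = 1$, each factor must have constant term $\pm 1$ and in fact every root of $h$ has modulus \emph{exactly}\ldots no — here I must be careful, because the roots are strictly inside the disc, so the product of their moduli is $<1$ unless the factor is constant. This is the crux: since all roots of $h$ satisfy $|z| < 1$ strictly, any monic integer factor $h_i$ of positive degree has $0 < |h_i(0)| < 1$, contradicting $h_i(0) \in \Z\setminus\{0\}$. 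Hence no nontrivial factorization exists and $h$ is irreducible over $\Q$. The step I expect to need the most care is establishing that \emph{all} roots of $h$ lie strictly inside the open unit disc — i.e. ruling out roots on $|z|=1$ and correctly counting via Rouché that there is exactly one root (necessarily the positive real one, by the evenness symmetry $H(u_0)=0 \Leftrightarrow h(\pm\sqrt{u_0})=0$) outside any disc $|u|<a$ with $a<u_0$; once that is in hand, the constant-term argument makes irreducibility immediate, with no need for Eisenstein or reduction mod $p$.
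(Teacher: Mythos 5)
Your count of the real roots is correct and is essentially the same as the paper's: writing $h(x)=H(x^2)$ with $H(u)=u^{2k+1}+\ldots+u-1$, the function $H$ is strictly increasing on $u>0$ with $H(0)=-1$ and $H(1)=2k>0$, so $h$ has exactly the two real roots $\pm\sqrt{u_0}$.

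The irreducibility argument, however, rests on a false claim. You assert that all $2k+1$ roots of $H$ (hence all roots of $h$) lie in the closed unit disc, the non-real ones strictly inside. This is impossible: by Vieta the product of the roots of $H$ is $(-1)^{2k+1}\cdot(-1)=1$, and since $u_0<1$ the remaining $2k$ roots must have product of moduli $1/u_0>1$. Indeed $-u^{2k+1}H(1/u)$ is exactly the polynomial $f$ of Lemma~\ref{bhu0} with degree $2k+1$, so the non-real roots of $H$ are the reciprocals of roots lying in the annulus $3^{-1/(2k+1)}<|z|<1$ and hence all have modulus $>1$ (for $k=1$, $H(u)=u^3+u^2+u-1$ has $u_0\approx 0.544$ and two complex roots of modulus $\approx 1.356$). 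Your Rouch\'e step is therefore inverted --- and in any case neither $u^{2k+2}$ nor $-2u+1$ dominates the other on a circle $|u|=a$ with $a$ just below $u_0$ --- and the constant-term argument then gives nothing: $4k$ of the $n$ roots of $h$ lie strictly outside the unit circle, and $|h(0)|=1$ is perfectly consistent with a splitting into two monic integer factors each with constant term $\pm1$. Excluding such a splitting is exactly the hard part. The paper does it by noting that for irreducible $f$ the polynomial $f(x^2)$ is either irreducible or of the form $-g(x)g(-x)$ with $g$ irreducible, passing to reciprocal polynomials so that the hypothetical factor becomes the minimal polynomial of a Pisot number smaller than $\sqrt{2}<1.42$, and then invoking the Siegel--Dufresnoy--Pisot classification of Pisot numbers below $1.42$ (only $x^3-x-1$ and $x^4-x^3-1$ occur), both of which are ruled out. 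Some input of this strength, or another genuine irreducibility argument, is needed; your proposal does not supply one.
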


\begin{proof} Set $f(x)=x^{n/2}+x^{n/2-1}+\ldots+x-1$, then $f(x^2)$ is exactly the polynomial in the lemma. Since 
$n/2=2k+1$ is odd, and $-f$ is reciprocal to the polynomial considered in Lemma~\ref{bhu0}, its root $\be_{n/2}$ is real
and positive, and other $n/2-1$ roots are complex. 
By Lemma~\ref{bhu0}, we know that $\be_{n/2} > 1/2$. 
Note that $f$ is irreducible, then it is easy to see that
$f(x^2)$ is either irreducible with precisely two real roots $\pm \sqrt{\beta_{n/2}}$ or $f(x^2)=(-1)^{n/2}h(x)h(-x)=-h(x)h(-x)$ for some irreducible polynomial $h\in \Z[x]$.   

To show that the latter case is impossible we change the polynomials into their reciprocals. Then, since the constant coefficient of $h$ is $\pm 1$, we must have
$$-x^{n}f(1/x^2)= x^{n}-x^{n-2}-\ldots-x^2-1=-g(x)g(-x)$$
for some monic irreducible polynomial $g$ with integer coefficients, 
which has a root $1/\sqrt{\beta_{n/2}}$.  
Since $-1/\sqrt{\beta_{n/2}}$ is the root of $g(-x)$ and, by Lemma~\ref{bhu0}, these two roots of $x^{n}f(1/x^2)$ are its only roots outside the unit circle, the number $1/\sqrt{\beta_{n/2}}<\sqrt{2}=1.414213\ldots$ must be a Pisot number. Thus, $g$ is the minimal polynomial of a Pisot number.

However, by the results of Siegel~\cite{siegel}, Dufresnoy and Pisot 
\cite{duf0, duf} (see also~\cite{bertin}), there are only two Pisot numbers smaller than $1.42$: one with minimal polynomial $x^3-x-1$ and another with minimal polynomial $x^4-x^3-1$. Since $\deg g=n/2=2k+1$ is odd, it remains to check the only possibility $g(x)=x^3-x+1$ when $n=6$. Then, we see that
$$-g(x)g(-x)=(x^3-x+1)(x^3-x-1)=x^6-2x^4+x^2-1$$
is not equal to $x^{6}-x^4-x^2-1$. 
This in fact completes the proof. 
\end{proof}

We conclude this subsection with the next lemma
which is crucial in the proofs of 
Theorems~\ref{bhu1}  and~\ref{kiy}.

\begin{lemma}\label{lopo}
Let $\beta_1,\beta_2, \ldots,\beta_t$ be all the roots of the polynomial $x^n+x^{n-1}+\ldots+x-1$ with positive imaginary parts, where $n\ge 3$.
Then, for each $q>0$ we have
\begin{equation}\label{beeta}
\sum_{j=1}^t |\beta_j|^{q} =t+\frac{q}{2} \log 2+O(n^{-1/4}),
\end{equation}
where the constant in $O$ depends only on $q$.
\end{lemma}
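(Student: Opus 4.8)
The plan is to relate the sum $\sum_{j=1}^t|\beta_j|^q$ to an integral over the unit circle and to exploit the precise location of the roots supplied by Lemma~\ref{bhu0}. Write $f(x)=x^n+x^{n-1}+\ldots+x-1$; up to sign this is reciprocal to the polynomial of Lemma~\ref{bhu0}, so its roots are the reciprocals of the roots of $x^n-x^{n-1}-\ldots-1$. By Lemma~\ref{bhu0}, $f$ has one real root $\beta_0\in(1/2,1)$ (namely $\vartheta^{-1}$), possibly one real root in a small interval around $-1$ when $n$ is even, and all remaining roots in the annulus $1<|z|<3^{1/n}$ (the reciprocals of \eqref{didz2}). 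Since $3^{1/n}=1+O(n^{-1})$, every complex root $\beta_j$ satisfies $|\beta_j|^q=1+O(n^{-1})$ individually, but there are $\asymp n$ of them, so term-by-term estimation only gives $\sum|\beta_j|^q=t+O(1)$, which is too weak. The key point is that the correction term must be extracted as a genuine average, and the Erd\H{o}s--Tur\'an estimate (Lemma~\ref{ertur}) is exactly the tool that lets us replace the sum over arguments of the roots by an integral.

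First I would set up the following. For a complex root $\beta=re^{i\phi}$ of $f$ with $r=|\beta|$, the relation $g(\beta^{-1})=\beta^{-(n+1)}-2\beta^{-n}+1=0$ from Lemma~\ref{bhu0} (with $g(x)=x^{n+1}-2x^n+1$) gives $|\beta|^{-n}=|2-\beta^{-1}|^{-1}$, hence $r^n = |2-\beta^{-1}|$. As $n\to\infty$ the complex roots $\beta^{-1}$ accumulate on $|z|=1$, so to leading order $r^n = |2 - e^{-i\phi}| + O(n^{-1}) = \sqrt{5-4\cos\phi}+O(n^{-1})$, and therefore
\begin{equation*}
\log r = \frac{1}{2n}\log(5-4\cos\phi) + O(n^{-2}),
\qquad
|\beta|^q = r^q = 1 + \frac{q}{2n}\log(5-4\cos\phi) + O(n^{-2}).
\end{equation*}
Summing over the $t$ complex roots with positive imaginary part,
\begin{equation*}
\sum_{j=1}^t |\beta_j|^q = t + \frac{q}{2n}\sum_{j=1}^t \log(5-4\cos\phi_j) + O(n^{-1}),
\end{equation*}
so everything reduces to evaluating $S:=\sum_{j=1}^t \log(5-4\cos\phi_j)$, where $\phi_j\in(0,\pi)$ are the arguments. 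By the symmetry $\beta\mapsto\overline\beta$ we may work with the full multiset of non-real roots, which is $2S$; there are $n-1$ or $n-2$ of them depending on parity, plus the real roots contribute $O(1)$ since $5-4\cos\phi$ is bounded away from $0$ and $\infty$ on the whole circle.

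Next I would apply the Erd\H{o}s--Tur\'an estimate to show that the arguments $\phi_j$ equidistribute on $[0,2\pi)$: here $L(f)=n+1$ and $|a_na_0|=1$, so Lemma~\ref{ertur} gives a discrepancy bound $O(\sqrt{n\log n})$. Partitioning $[0,2\pi)$ into $\asymp n^{3/4}$ equal arcs and using that $\phi\mapsto\log(5-4\cos\phi)$ has bounded variation (it is smooth away from $\phi=0$ where it is integrable), I expect
\begin{equation*}
\frac{1}{n}\sum_{\substack{\beta\ \text{non-real root}}} \log(5-4|\!\cos(\arg\beta)|) = \frac{1}{2\pi}\int_0^{2\pi}\log(5-4\cos\phi)\,d\phi + O(n^{-1/4}),
\end{equation*}
the $O(n^{-1/4})$ coming from optimizing the number of arcs against the discrepancy. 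The integral is a classical one: $\frac{1}{2\pi}\int_0^{2\pi}\log|2-e^{i\phi}|^2\,d\phi = 2\log 2$ by Jensen's formula (the Mahler measure of $x-2$ is $2$). Hence $\frac{1}{n}\cdot 2S = 2\log 2 + O(n^{-1/4})$, i.e. $S = n\log 2 + O(n^{3/4})$, and plugging back,
\begin{equation*}
\sum_{j=1}^t |\beta_j|^q = t + \frac{q}{2n}\bigl(n\log 2 + O(n^{3/4})\bigr) + O(n^{-1}) = t + \frac{q}{2}\log 2 + O(n^{-1/4}),
\end{equation*}
as claimed.

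The main obstacle is the quantitative error analysis in the equidistribution step: one must carefully track how the $O(\sqrt{n\log n})$ discrepancy from Lemma~\ref{ertur} interacts with the (integrable but unbounded) singularity of $\log(5-4\cos\phi)$ at $\phi=0$, since the root arguments nearest $0$ are precisely the ones whose moduli $r$ are largest. I would handle the small-argument roots separately: by Lemma~\ref{bhu0} their moduli are still $1+O(n^{-1})$, and there are $O(n^{3/4})$ of them in an arc of length $O(n^{-1/4})$ around $0$, each contributing $O(n^{-1}\log n)$ to $\frac{q}{2n}S$, which is absorbed into the $O(n^{-1/4})$ term; on the complementary region $\log(5-4\cos\phi)$ is bounded and the van der Corput / Koksma inequality applies cleanly. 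A secondary (minor) point is justifying the uniform replacement $r^n=|2-\beta^{-1}|$ by $\sqrt{5-4\cos(\arg\beta)}$; this needs $|\beta^{-1}|=1+O(n^{-1})$ uniformly, which again is exactly \eqref{didz2}.
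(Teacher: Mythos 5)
Your strategy is essentially the paper's: pass to the reciprocals of the roots of Lemma~\ref{bhu0}, which lie in the annulus $1<|z|<3^{1/n}$, use the relation $|\beta|^{n+1}=|2\beta-1|$ coming from the trinomial $x^{n+1}-2x+1$, expand $|\beta_j|^q=1+\tfrac{q}{2n}\log(5-4\cos\phi_j)+O(n^{-2})$, and convert the sum over arguments into the integral $\tfrac{1}{2\pi}\int_0^{2\pi}\log(5-4\cos\phi)\,d\phi=2\log 2$ via Erd\H{o}s--Tur\'an equidistribution. Two details, however, are wrong as written. First, the partition into $\asymp n^{3/4}$ arcs breaks the error analysis: with $k$ arcs, each carrying a count error of order $\sqrt{n\log n}$ (your application of Lemma~\ref{ertur} to $f$ itself, with $L(f)=n+1$) and a function-variation error of order $1/k$ per root, the accumulated error in $S$ is $O\bigl(k\sqrt{n\log n}\bigr)+O(n/k)$; for $k\asymp n^{3/4}$ the first term is of order $n^{5/4}\sqrt{\log n}$, which swamps the main term $n\log 2$. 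You presumably meant arcs of \emph{length} $\asymp n^{-1/4}$, i.e.\ $k\asymp n^{1/4}$ arcs, which is the paper's choice and balances the two errors at $O(n^{3/4})$. Even then, applying Erd\H{o}s--Tur\'an to $f$ costs a factor $\sqrt{\log n}$; the paper avoids it by applying Lemma~\ref{ertur} to $(x^n+\cdots+x-1)(x-1)=x^{n+1}-2x+1$, whose length is $4$. Alternatively, Koksma's inequality with the bounded-variation test function $\log(5-4\cos\phi)$ gives $O(\sqrt{n\log n})$ for the error in $S$ directly and more than suffices.

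Second, your ``main obstacle'' is not there: $5-4\cos 0=1$, so $\log(5-4\cos\phi)$ vanishes at $\phi=0$ and is real-analytic and bounded (between $0$ and $\log 9$) on the whole circle. There is no singularity, no unbounded values near $\phi=0$, and no need to treat the small-argument roots separately; you appear to be thinking of $\log|1-e^{i\phi}|$, whereas the relevant point $z=2$ lies off the unit circle. With these two corrections (and noting that the real roots contribute $O(1)$ terms that are absorbed as you say), your argument is the paper's proof.
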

  
\begin{proof}   
We first observe that the polynomial $-(x^n+x^{n-1}+\ldots+x-1)$ 
is the reciprocal polynomial of $f$ defined in Lemma~\ref{bhu0}. 
So, we indeed have $t\ge 1$ since $n \ge 3$. 
We set $\beta_j=\vartheta_j^{-1}$, where $\vartheta_j$, $j=1, \ldots,n$, are the roots of $f$ defined in Lemma~\ref{bhu0} (and in its proof).
 Note that~\eqref{didz1} and~\eqref{didz2} implies that 
\begin{equation}\label{nhyt2}
1<|\beta_j| < 3^{1/n}
\end{equation}
for each $j \leq n-1$. Here, $t=(n-1)/2$ for $n$ odd and $t=(n-2)/2$ for $n$ even, and the conjugates are labelled so that the imaginary parts of $\be_1,\ldots,\be_t$ are positive.

In order to evaluate the sum $\cC_{q}=\sum_{j=1}^t |\beta_j|^q$ we
consider the following $k$ sectors with arguments in
the intervals $[\pi j/k, \pi(j+1)/k)$ for $j=0,1, \ldots,k-1$:
$$
S_j=\{z \in \C \>:\>  1<|z|<3^{1/n},   \quad \pi j/k \leq \arg z< \pi(j+1)/k\}.
$$
Taking into account~\eqref{nhyt2} and the definition of $S_j$, we see that the roots
$\beta_1, \ldots,\beta_t$ all lie in the union of the above
$k$ sectors $\bigcup_{j=0}^{k-1} S_j$. 

Applying Lemma~\ref{ertur} to the polynomial 
$$
(x^n+x^{n-1}+\ldots+x-1)(x-1)=x^{n+1}-2x+1
$$
of degree $d=n+1$, we find that the number $N_j$ of its
roots $\beta_1,\ldots,\beta_t$ lying in $S_j$ satisfies
$$
|N_j-(n+1)/(2k)| \le 16\sqrt{(n+1)\log 4}+1,
$$
where the extra term $1$ reflects a possible real root $1$
(for $j=0$). 
Hence, selecting, for instance, 
\begin{equation}\label{kkkk}
k=\lfloor n^{1/4} \rfloor
\end{equation}
 and using
$n-2t \in \{1,2\}$, we find that
\begin{equation}\label{etrur1}
N_j=t/k+O(\sqrt{n}).
\end{equation}

For the diameter $\delta$ of the sector $S_j$,  
we clearly have 
\[
\delta  \leq 2 \cdot 3^{1/n} \sin(\pi/(2k))+3^{1/n}-1 < 
\pi 3^{1/n}/k+3^{1/n}-1 =O(1/k).
\]
For any root $\gamma$ of $x^{n+1}-2x+1$ lying in $S_j$, we have $| \gamma|^{n+1}=|2\gamma - 1|$. 
Since the distance from $ \gamma$ to $e^{\pi j \sqrt{-1}/k}$ is less than $\delta = O\(1/k\)$, we obtain
$$
|2 \gamma-1|=|2e^{\pi j \sqrt{-1}/k}-1|+O(1/k) =\sqrt{5-4\cos(\pi j/k)}+O(1/k).
$$
Hence,
\begin{align*}
| \gamma|^q &=|2 \gamma-1|^{q/(n+1)}=\(5-4\cos(\pi j/k)+O(1/k)\)^{q/(2n+2)}\\
& =1+\frac{q\log (5-4\cos(\pi j/k))}{2n+2}+O\(\frac{1}{kn}\) \\
&
=1+\frac{q\log (5-4\cos(\pi j/k))}{4t}+O\(\frac{1}{kn}\).
\end{align*}
Combining this with~\eqref{kkkk} and~\eqref{etrur1}, we see that
the total contribution of squares of the $N_j$ roots in the $j$-th sector $S_j$
into the sum $\cC_q$ is 
$$R_j = N_j+ \frac{q\log (5-4\cos(\pi j/k))}{4k}+O(n^{-1/2}).$$

Since $\sum_{j=0}^{k-1} N_j=t$, summing over $j$, $0 \leq j \leq k-1$, we deduce that
\[
\cC_q =\sum_{j=0}^{k-1} R_j = t+ \frac{q}{4}\sum_{j=0}^{k-1} \frac{\log (5-4\cos(\pi j/k))}{k} +O(kn^{-1/2}).\]
Replacing the sum $k^{-1}\sum_{j=0}^{k-1}\log (5-4\cos(\pi j/k))$
by the corresponding integral $\int_{0}^{1} \log(5-4\cos(\pi x))\> dx$ introduces the error of size  $O(1/k)$. Hence, 
recalling the choice of $k$ in~\eqref{kkkk},  we find that 
$$\cC_q= t+\frac{q}{4}\int_{0}^{1} \log(5-4\cos(\pi x))\> dx+O(n^{-1/4}).$$

Now, from the standard formula
\begin{align*}
\int_{0}^{1} \log(a+b\cos(\pi x))\> dx &=\frac{1}{\pi} \int_{0}^{\pi} \log(a+b\cos x) \> dx \\
&=\log\(\frac{a+\sqrt{a^2-b^2}}{2}\) 
\end{align*}
for $a=5$ and $b=-4$, it follows that the integral involved in $\cC_q$ is equal to $\log 4=2\log2$. 
Consequently,
\[\cC_q=t+\frac{q}{2}\log 2+O(n^{-1/4}),\]
which completes the proof of the lemma.
\end{proof}

\subsection{Absolute and relative normalised sizes}

Before going further, in this subsection we discuss the relations between absolute and relative normalised sizes. This could be of independent interest. 

Given an algebraic number  $\alpha$ of degree 
$n=s+2t$ with $s$ real and  $2t$ complex conjugates 
$$
\al_1,\ldots,\al_s, \al_{s+1},\ldots,\al_{s+t},\overline{\al_{s+1}}, \ldots, \overline{\al_{s+t}},
$$ 
it is convenient to define
\begin{equation}
\label{bhyt}
\cR(\al) =\sum_{i=1}^{s} \al_i^2 \mand \cC(\al) =\sum_{i=1}^{t} |\al_{s+i}|^2.
\end{equation}
By convention, we set $\cR(\al)=0$ if $s=0$, and $\cC(\al)=0$ if $t=0$. With this notation we have $$\|\al\|^2=\cR(\al)+\cC(\al)$$ and 
$$m(\al)= \frac{\|\al\|^2}{s+t}=\frac{\cR(\al)+\cC(\al)}{s+t}.$$

 In particular, we often use the fact that, by AM-GM, 
\begin{equation}
\label{eq:RC AMGM}
\cR(\al) \ge s\(\prod_{i=1}^{s} |\al_i|\)^{2/s} \mand  \cC(\al) \ge t\(\prod_{i=1}^{t} |\al_{s+i}|\)^{2/t}.
\end{equation}

\begin{lemma}\label{subfield} 
Let  $\al$ be an arbitrary algebraic number having $s_1$ real and $2t_1$ complex
conjugates over $\Q$, and let
$\be$ be an algebraic number having $s_2$ real and $2t_2$ complex conjugates over $\Q$ and degree $n_2=s_2+2t_2$ over the field $\Q(\alpha)$. 
 Then, the signature $(s,t)$ of $\K=\Q(\alpha,\beta)$ is 
\begin{equation}\label{cfrt1}
(s_1s_2, s_1t_2+s_2t_1+2t_1t_2),
\end{equation}
 and
\begin{equation}\label{eq:alpha RC}
\|\al\|_{\K}^2=(s_2+t_2)\cR(\al)+(s_2+2t_2) \cC(\al).
\end{equation}
Furthermore, if $t_2=0$ and $\al$ is an algebraic integer, we have 
\begin{equation}\label{cfrt3}
\|\al\|_{\K}^2= n_2 \|\al\|^2,
\end{equation}
and 
\begin{equation}\label{cfrt4}
m(\K) \leq m(\al).
\end{equation}
\end{lemma}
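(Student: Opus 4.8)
The plan is to establish the four assertions in the order they appear, since each builds on the previous. First I would determine the signature~\eqref{cfrt1}. The embeddings of $\K=\Q(\al,\be)$ into $\C$ are obtained by first choosing an embedding of $\Q(\al)$ and then extending it to $\K$; since $\be$ has degree $n_2=s_2+2t_2$ over $\Q(\al)$, each embedding of $\Q(\al)$ extends in $n_2$ ways. A subtlety is that whether an embedding of $\K$ is real or complex depends on both the chosen embedding of $\Q(\al)$ and the chosen extension. Concretely: a real embedding $\sigma$ of $\Q(\al)$ extends to $s_2$ real embeddings and $t_2$ conjugate pairs of complex embeddings of $\K$ (mirroring the signature $(s_2,t_2)$ of $\be$ over $\Q$), whereas a complex embedding $\tau$ of $\Q(\al)$ forces all $n_2$ extensions to be complex, giving $n_2/2 = s_2/2+t_2$ conjugate pairs from each of the $t_1$ pairs of $\Q(\al)$ — but one must be careful counting pairs rather than embeddings. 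Bookkeeping: $s = s_1 s_2$ real embeddings; complex pairs $t = s_1 t_2 + t_1(s_2+2t_2) = s_1 t_2 + s_2 t_1 + 2 t_1 t_2$, which is~\eqref{cfrt1}. I would double-check via $s+2t = (s_1s_2) + 2(s_1t_2+s_2t_1+2t_1t_2) = (s_1+2t_1)(s_2+2t_2) = [\Q(\al):\Q]\cdot n_2 = [\K:\Q]$.

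Next, for~\eqref{eq:alpha RC}, the key point is to count, among all $s+2t$ embeddings $\rho$ of $\K$, how often $\rho(\al)$ equals a given real conjugate $\al_i$ of $\al$ and how often it equals a given complex conjugate $\al_{s_1+i}$, and to track which of these embeddings $\rho$ are real versus complex. Each real conjugate $\al_i$ arises from one real embedding $\sigma$ of $\Q(\al)$, which extends to $s_2$ real and $2t_2$ complex embeddings of $\K$; so $\al_i^2$ is counted with total weight $s_2 + 2t_2$ in $\sum_\rho \rho(\al)^2$ over all embeddings, but in $\|\al\|_\K^2$ a real embedding contributes $x^2$ and a complex conjugate pair contributes $2|z|^2$, so... wait — more carefully, $\|\al\|_\K^2 = \sum_{\rho \text{ real}} \rho(\al)^2 + \sum_{\{\rho,\bar\rho\} \text{ cx pairs}} |\rho(\al)|^2$, where each complex pair is counted once. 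So $\al_i$ (real, $|\al_i|^2=\al_i^2$) gets weight $s_2$ from real extensions plus $t_2$ from complex pairs, total $s_2+t_2$; and each complex conjugate $\al_{s_1+j}$ comes from a complex embedding of $\Q(\al)$, all $n_2$ of whose extensions are complex, contributing $(s_2+2t_2)/2$ pairs each counted as $|\al_{s_1+j}|^2$, but we sum $\cC(\al)=\sum_{j=1}^{t_1}|\al_{s_1+j}|^2$ over the $t_1$ pairs, and the pair $\{\al_{s_1+j},\overline{\al_{s_1+j}}\}$ together accounts for $s_2+2t_2$ complex embeddings of $\K$ hence $(s_2+2t_2)$ terms $|\cdot|^2$ but grouped into $(s_2+2t_2)$ ... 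I would organize this as: $\|\al\|_\K^2 = (s_2+t_2)\sum_{i=1}^{s_1}\al_i^2 + (s_2+2t_2)\sum_{j=1}^{t_1}|\al_{s_1+j}|^2 = (s_2+t_2)\cR(\al) + (s_2+2t_2)\cC(\al)$, checking the complex-conjugate weight against $s+2t$ counting to fix the constant.

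The remaining two claims are then immediate specializations. If $t_2=0$ then $n_2=s_2$, so~\eqref{eq:alpha RC} reads $\|\al\|_\K^2 = s_2\cR(\al) + s_2\cC(\al) = s_2(\cR(\al)+\cC(\al)) = n_2\|\al\|^2$, giving~\eqref{cfrt3}. For~\eqref{cfrt4}: by~\eqref{apibr}, $m(\K) \le m_\K(\al) = \|\al\|_\K^2/(s+t)$; using $t_2=0$ we have $s=s_1s_2=s_1n_2$ and $t=s_2t_1=n_2t_1$, so $s+t = n_2(s_1+t_1)$, hence $m_\K(\al) = n_2\|\al\|^2 / (n_2(s_1+t_1)) = \|\al\|^2/(s_1+t_1) = m(\al)$. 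This needs $\al$ to be an algebraic integer in $\Z_\K$, which holds, and requires $\Q(\al)$ to have signature $(s_1,t_1)$ — true by hypothesis.

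The main obstacle is the signature/weight bookkeeping in the first two parts: one must correctly handle the asymmetry that a complex embedding of $\Q(\al)$ forces all extensions to $\K$ to be complex, and one must count conjugate pairs (not individual embeddings) consistently in the definition of $\|\cdot\|_\K$. I would guard against sign and factor-of-two errors by verifying the identity $s+2t=[\K:\Q]$ and by checking~\eqref{eq:alpha RC} against the total-embedding sum $\sum_\rho |\rho(\al)|^2 = (s_2+2t_2)\|\al\|_{\Q(\al)}^2$ when $\al$ is such that this is meaningful.
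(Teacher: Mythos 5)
Your proof is correct and follows essentially the same route as the paper: the paper counts the real conjugates of a primitive element $\al+r\be$ where you count extensions of embeddings of $\Q(\al)$ to $\K$, but the key bookkeeping for \eqref{eq:alpha RC} (each real conjugate $\al_i$ weighted $s_2+t_2$, each complex conjugate pair weighted $s_2+2t_2$) and the deductions of \eqref{cfrt3} and \eqref{cfrt4} are identical. One peripheral remark: your proposed sanity check $\sum_{\rho}|\rho(\al)|^2=(s_2+2t_2)\|\al\|_{\Q(\al)}^2$ holds only when $t_1=0$, since the left-hand side counts each complex conjugate pair twice while $\|\al\|^2=\cR(\al)+\cC(\al)$ counts it once; this does not affect the argument.
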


\begin{proof}
The number $\al$ is of degree $n_1=s_1+2t_1$ over $\Q$ with $s_1$ real conjugates $\al_j$, $1\leq j \leq s_1$, and $2t_1$ complex conjugates $\al_j$, $s_1+1 \leq j \leq n_1$ such that $\al_{s_1+t_1+i}=\overline{\al_{s_1+i}}$ for $1\le i \le t_1$.  Similarly, $\be$ has $s_2$ real conjugates $\be_j$, $1 \leq j \leq s_2$, and $2t_2$ complex conjugates $\be_j$, $s_2+1 \leq j \leq n_2$, over $\Q$ such that $\be_{s_2+t_2+i}=\overline{\be_{s_2+i}}$ for $1\le i \le t_2$. Furthermore, since $\K=\Q(\al,\be)=\Q(\al+r\be)$ for some $r \in \Q$, the primitive element $\al+r\be$ of the field $\K$ has $n=n_1n_2$ distinct conjugates $\al_i+r\be_j$, where $1 \leq i \leq n_1$ and $1 \leq j \leq n_2$, over $\Q$. Hence, its conjugate $\al_i+r\be_j$ is real if and only if $1 \leq i \leq s_1$ and $1 \leq j \leq s_2$. Indeed, it cannot be real if at least one of the numbers $\al_i, \be_j$ is complex, since otherwise it is equal to its complex conjugate $\overline{\al_i}+r\overline{\be_j}$ which should be distinct from $\al_i+r\be_j$ when either $i>s_1$ or $j>s_2$. Consequently, $\K$ has exactly
$s=s_1s_2$ real embeddings. This implies~\eqref{cfrt1}, because $$n-s_1s_2=n_1n_2-s_1s_2=2s_1t_2+2s_2t_1+4t_1t_2.$$

Next, observe that in the sum $\|\al\|_{\K}^2$ containing $s_1s_2+s_1t_2+s_2t_1+2t_1t_2$ squares of conjugates of $\al$, each real conjugate $\al_i$, $1 \leq i \leq s_1$, appears $s_2$ times
under real embedding and $t_2$ times under complex embedding of $\K$, whereas each complex conjugate $\al_i$, where $s_1+1 \leq i \leq s_1+t_1$, appears $n_2=s_2+2t_2$ times under complex embedding of $\K$.  Hence, taking into account~\eqref{bhyt} we 
obtain~\eqref{eq:alpha RC}.  

It is evident that for $t_2=0$ we have $n_2=s_2$, so $\|\al\|^2=\cR(\al)+\cC(\al)$ combined with~\eqref{eq:alpha RC} implies~\eqref{cfrt3}.

Finally, selecting this
$\alpha$ in~\eqref{apibr} and using~\eqref{cfrt1}, \eqref{cfrt3} and $t_2=0$, we derive that
\begin{align*} m(\K) &\leq m_{\K}(\al)= \frac{\|\al\|_{\K}^2}{s_1s_2+s_1t_2+s_2t_1+2t_1t_2}\\&
= \frac{n_2\|\al\|^2}{s_2(s_1+t_1)}=\frac{\|\al\|^2}{s_1+t_1}=m(\al),\end{align*}
which proves~\eqref{cfrt4}. 
\end{proof}

It is worth pointing out that in Lemma~\ref{subfield} we actually assume that the fields $\Q(\al)$ and $\Q(\be)$ are linearly disjoint over $\Q$, which means that  $[\Q(\al,\be):\Q]=[\Q(\al):\Q][\Q(\be):\Q]$. 
Let $\al$ be an algebraic number in a number field $\K$. If there is no $\be \in \K$ such that $K=\Q(\al,\be)$ and the fields $\Q(\al)$ and $ \Q(\be)$ are linearly disjoint over $\Q$, then~\eqref{eq:alpha RC} may be not true. For example, we can take $\K=\Q(2^{1/4})$ and $\al=\sqrt{2}$. Then, $\K$ is of degree $2$ over $\Q(\al)$, $\|\al\|^2=\cR(\al)=4$, and $\|\al\|_{\K}^2=6$; since in this case $(s_2,t_2)=(2,0)$ or $(0,1)$ for any quadratic $\beta$, the equation~\eqref{eq:alpha RC} does not hold. 

The following result is a tool to construct number fields $\K$ with $m(\K)<1$. 

\begin{theorem}
\label{thm: mK<1}
Let  $\al$ be an arbitrary algebraic integer having $s_1$ real and $2t_1$ complex
conjugates over $\Q$ such that $m(\alpha) < 1$.
Let $\be$ be an algebraic number having $s_2$ real and $2t_2$ complex conjugates over $\Q$ of degree $n_2=s_2+2t_2$ over $\Q(\alpha)$. Put $\K=\Q(\al,\be)$. Then, $m_{\K}(\alpha)< 1$
 if and only if
$$
\frac{t_2}{s_2+t_2} < \frac{s_1+t_1-\cR(\al) -\cC(\al) }{\cC(\al) -t_1}.
$$
\end{theorem}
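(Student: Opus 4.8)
The plan is to feed $\al$ and $\be$ into Lemma~\ref{subfield} --- its hypotheses are precisely the ones assumed here --- and then to carry out one elementary rearrangement, the only delicate point being the positivity of $\cC(\al)-t_1$. By \eqref{cfrt1}, the field $\K=\Q(\al,\be)$ has signature $(s,t)$ with
$$
s+t=s_1s_2+s_1t_2+s_2t_1+2t_1t_2=s_1(s_2+t_2)+t_1(s_2+2t_2),
$$
while \eqref{eq:alpha RC} gives $\|\al\|_\K^2=(s_2+t_2)\cR(\al)+(s_2+2t_2)\cC(\al)$. Hence $m_\K(\al)<1$ is equivalent to $\|\al\|_\K^2<s+t$, i.e. to
$$
(s_2+t_2)\bigl(\cR(\al)-s_1\bigr)+(s_2+2t_2)\bigl(\cC(\al)-t_1\bigr)<0,
$$
and, substituting $s_2+2t_2=(s_2+t_2)+t_2$ and $\cR(\al)+\cC(\al)=\|\al\|^2$, to
$$
(s_2+t_2)\bigl(\|\al\|^2-s_1-t_1\bigr)+t_2\bigl(\cC(\al)-t_1\bigr)<0 .
$$

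The step that really uses $m(\al)<1$ (and that I expect to be the main obstacle) is the inequality $\cC(\al)>t_1$; I take $t_1\ge1$, the case $t_1=0$ being degenerate since then $\cC(\al)=0$, $m_\K(\al)=m(\al)<1$ automatically, and the claimed inequality reads ``positive over $0$''. Put $R=\prod_{i=1}^{s_1}|\al_i|$ and $C=\prod_{i=1}^{t_1}|\al_{s_1+i}|^2$, so that $RC=|\Nm_{\Q(\al)}(\al)|$ is a positive integer, hence $RC\ge1$, and by \eqref{eq:RC AMGM} one has $\cR(\al)\ge s_1R^{2/s_1}$ and $\cC(\al)\ge t_1C^{1/t_1}$. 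First, $m(\al)<1$ forces $s_1\ge1$: otherwise $R=1$, so $C\ge1$ and $\|\al\|^2=\cC(\al)\ge t_1C^{1/t_1}\ge t_1$, contradicting $m(\al)<1$. Now suppose $\cC(\al)\le t_1$; then $C\le(\cC(\al)/t_1)^{t_1}\le1$, whence $R\ge C^{-1}\ge1$ and $\cR(\al)\ge s_1C^{-2/s_1}$. Writing $C=e^{-\lambda}$ with $\lambda\ge0$ and using $e^x\ge1+x$,
$$
\|\al\|^2=\cR(\al)+\cC(\al)\ge s_1e^{2\lambda/s_1}+t_1e^{-\lambda/t_1}\ge(s_1+2\lambda)+(t_1-\lambda)\ge s_1+t_1,
$$
contradicting $\|\al\|^2<s_1+t_1$. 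Hence $\cC(\al)-t_1>0$.

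With $\cC(\al)-t_1>0$ and $s_2+t_2\ge1>0$, I finish by dividing the last displayed inequality of the first paragraph by $(s_2+t_2)(\cC(\al)-t_1)$, which gives
$$
\frac{t_2}{s_2+t_2}<\frac{s_1+t_1-\|\al\|^2}{\cC(\al)-t_1}=\frac{s_1+t_1-\cR(\al)-\cC(\al)}{\cC(\al)-t_1},
$$
exactly the asserted equivalence; since every step is reversible (division by a fixed positive quantity preserves strict inequality), both implications follow at once. Thus the proof is nothing but the substitution from Lemma~\ref{subfield} plus one line of algebra, apart from the estimate $\cC(\al)>t_1$, which is where some care is needed.
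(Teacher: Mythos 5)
Your proof is correct and follows essentially the same route as the paper: feed $\al,\be$ into Lemma~\ref{subfield}, establish $\cC(\al)>t_1$ from $m(\al)<1$ using AM--GM together with $|\Nm(\al)|\ge 1$, and then rearrange. The paper proves $\cC(\al)>t_1$ directly (if $R\ge1$ then $(RC_1)^2=R\cdot RC_1^2\ge1$ forces $\|\al\|^2\ge s_1+t_1$, so $R<1$, hence $C_1>1$, hence $\cC(\al)>t_1$), whereas you argue the contrapositive with the $e^x\ge 1+x$ trick; both are fine. One small point: your dismissal of the case $t_1=0$ via ``the claimed inequality reads positive over $0$'' is not a justification --- the right observation is that $t_1=0$ simply cannot occur under the hypothesis $m(\al)<1$, since then $C=1$, so $R\ge1$ and $\cR(\al)\ge s_1R^{2/s_1}\ge s_1=s_1+t_1$, exactly as in your $s_1\ge1$ argument; this is implicit in the paper's proof as well.
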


\begin{proof} 
Let 
$\al_1,\ldots,\al_{s_1}$ be real
and    $\al_{s_1+1},\ldots,\al_{s_1+t_1},\overline{\al_{s_1+1}}, \ldots, \overline{\al_{s_1+t_1}}$ be 
$2t_1$ complex conjugates of $\alpha$. 
First, since $\al$ is an algebraic integer, we have
\begin{equation}\label{mkiuj}
|\al_1\cdots \al_{s_1}|\cdot |\al_{s_1+1}\cdots \al_{s_1+t_1}|^2 \ge 1.
\end{equation}
In the case $|\al_1\cdots \al_{s_1}| \ge 1$, this gives
$$
\(|\al_1\cdots \al_{s_1}|\cdot |\al_{s_1+1}\cdots \al_{s_1+t_1}|\)^2 \ge 1,
$$
which, together with~\eqref{bhyt} and AM-GM implies
$$
\cR(\al) + \cC(\al)= \sum_{i=1}^{s_1} \al_i^2 + \sum_{i=1}^{t_1} |\al_{s_1+i}|^2 \ge s_1 + t_1.
$$
This contradicts to the assumption $$m(\al)=\frac{\cR(\al) + \cC(\al)}{s_1+t_1}<1.$$

So, under the assumption $\cR(\al) + \cC(\al) < s_1+t_1$, we must have $|\al_1\cdots \al_{s_1}| < 1$,
and thus, by~\eqref{mkiuj}, $|\al_{s_1+1}\cdots \al_{s_1+t_1}| > 1$. Then, from~\eqref{eq:RC AMGM} it follows that
$$
 \cC(\al) \ge t_1 |\al_{s_1+1}\cdots \al_{s_1+t_1}|^{2/t_1} > t_1.
$$
Consequently, as $\cR(\al) + \cC(\al) < s_1+t_1$, we must have $\cR(\al) < s_1$.

Next, using~\eqref{cfrt1} and~\eqref{eq:alpha RC} it is easy to see that the inequality $\|\al\|_{\K}^2<s_1s_2+s_1t_2+s_2t_1+2t_1t_2$ is equivalent to 
$$
\frac{t_2}{s_2+t_2} < \frac{s_1+t_1-\cR(\al) -\cC(\al) }{\cC(\al) -t_1}, 
$$
which concludes the proof.
\end{proof}

Theorem~\ref{thm: mK<1} tells us that starting from an algebraic integer $\al$ with $m(\al)<1$, we can construct infinitely many number fields $\K$ with $m(\K) < 1$. 

We now estimate $m_{\K}(\alpha)$ in terms of $m(\alpha)$ in a special case.

\begin{theorem}
\label{thm: mK mQ}
Given a number field $\K$ and an algebraic integer $\alpha \in \K$. If there exists $\beta \in \K$ such that $\K=\Q(\al,\be)$ and the fields $\Q(\al), \Q(\be)$ are linearly disjoint over $\Q$, then we have
$$m_{\K}(\alpha) \geq \min\{1,m(\alpha)\}.$$
\end{theorem}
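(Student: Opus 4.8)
The plan is to apply Lemma~\ref{subfield} and then split into cases according to whether $\alpha$ has any complex conjugates. Write $(s_1,t_1)$ for the signature data of $\alpha$ over $\Q$ and $(s_2,t_2)$ for that of $\beta$ over $\Q(\alpha)$, so that by~\eqref{cfrt1} the field $\K$ has signature $(s,t)=(s_1s_2,\,s_1t_2+s_2t_1+2t_1t_2)$ and hence $s+t=s_1s_2+s_1t_2+s_2t_1+2t_1t_2$. By~\eqref{eq:alpha RC} we have the exact formula $\|\al\|_{\K}^2=(s_2+t_2)\cR(\al)+(s_2+2t_2)\cC(\al)$, so that
$$
m_{\K}(\alpha)=\frac{(s_2+t_2)\cR(\al)+(s_2+2t_2)\cC(\al)}{s_1s_2+s_1t_2+s_2t_1+2t_1t_2}.
$$
The goal is to bound this from below by $\min\{1,m(\alpha)\}=\min\{1,(\cR(\al)+\cC(\al))/(s_1+t_1)\}$.

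First I would dispose of the case $t_1=0$, i.e. $\alpha$ totally real. Then $\cC(\al)=0$ and the numerator is $(s_2+t_2)\cR(\al)$ while the denominator is $s_1s_2+s_1t_2=s_1(s_2+t_2)$, so $m_{\K}(\alpha)=\cR(\al)/s_1=m(\alpha)$ and there is nothing to prove. So assume $t_1\ge 1$. The key algebraic observation is that the coefficient of $\cC(\al)$ in the numerator, namely $s_2+2t_2$, is larger than the coefficient $s_2+t_2$ of $\cR(\al)$; intuitively, passing to the larger field ``weights'' the complex part more heavily, and this is exactly what should force the relative size up. Concretely I would write the numerator as $(s_2+t_2)(\cR(\al)+\cC(\al))+t_2\cC(\al)$ and compare term by term against $(s_1+t_1)$ times the denominator after clearing; equivalently, it suffices to show
$$
(s_2+t_2)\cR(\al)+(s_2+2t_2)\cC(\al)\ \ge\ \min\{1,m(\alpha)\}\,\bigl(s_1s_2+s_1t_2+s_2t_1+2t_1t_2\bigr).
$$

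The main case is $m(\alpha)<1$, where the right side is $m(\alpha)(s+t)$ and I need $m_{\K}(\alpha)\ge m(\alpha)$; equivalently, after substituting $\cR(\al)+\cC(\al)=m(\alpha)(s_1+t_1)$, the claim reduces to an inequality of the shape $t_2\cC(\al)\ge m(\alpha)\bigl(s_2t_1+2t_1t_2-t_1(s_2+t_2)-\text{(something)}\bigr)$, which after simplification should collapse to $t_2\bigl(\cC(\al)-m(\alpha)t_1\bigr)\ge 0$ or a close variant. This is where the hypothesis $m(\alpha)<1$ does real work: by the argument already carried out in the proof of Theorem~\ref{thm: mK<1}, when $m(\al)<1$ and $\al$ is a nonzero algebraic integer with $t_1\ge 1$ one necessarily has $|\al_1\cdots\al_{s_1}|<1$, hence $|\al_{s_1+1}\cdots\al_{s_1+t_1}|>1$ by the integrality bound~\eqref{mkiuj}, and then~\eqref{eq:RC AMGM} yields $\cC(\al)\ge t_1|\al_{s_1+1}\cdots\al_{s_1+t_1}|^{2/t_1}>t_1\ge m(\alpha)t_1$. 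So the needed nonnegative quantity $\cC(\al)-m(\alpha)t_1$ is indeed positive, and the inequality follows. I would quote that portion of the proof of Theorem~\ref{thm: mK<1} rather than reprove it.

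In the remaining case $m(\alpha)\ge 1$ the target is the cleaner bound $m_{\K}(\alpha)\ge 1$, i.e. $(s_2+t_2)\cR(\al)+(s_2+2t_2)\cC(\al)\ge s_1s_2+s_1t_2+s_2t_1+2t_1t_2$. Here I would use $m(\alpha)\ge1$, i.e. $\cR(\al)+\cC(\al)\ge s_1+t_1$, together with the same integrality input: I still want a lower bound on $\cC(\al)$ alone to handle the extra $t_2\cC(\al)$ term against the extra $2t_1t_2$ term, and the cleanest uniform statement is $\cC(\al)\ge t_1$ whenever $t_1\ge1$ and $\alpha$ is a nonzero algebraic integer — this holds because if $|\al_{s_1+1}\cdots\al_{s_1+t_1}|\ge1$ then~\eqref{eq:RC AMGM} gives it immediately, while if that product is $<1$ then~\eqref{mkiuj} forces $|\al_1\cdots\al_{s_1}|>1$ so $\cR(\al)\ge s_1$ by AM--GM and hence $\cC(\al)\ge s_1+t_1-\cR(\al)$, which is only useful if... so in fact the honest route is to observe that at least one of $\cR(\al)\ge s_1$ or $\cC(\al)\ge t_1$ always holds for a nonzero algebraic integer with $s_1,t_1\ge1$, and check that each of these two sub-cases, combined with $\cR(\al)+\cC(\al)\ge s_1+t_1$ and the fact that the $\cC$-coefficient exceeds the $\cR$-coefficient, yields $m_{\K}(\alpha)\ge1$ by a short convexity/rearrangement argument. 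The main obstacle I anticipate is precisely this bookkeeping in the $m(\alpha)\ge1$ case: making sure the inequality is proved with the correct direction of the weight asymmetry and without accidentally needing a bound on $\cR(\al)$ and $\cC(\al)$ simultaneously that integrality does not supply. Once the elementary lemma ``$\cR(\al)\ge s_1$ or $\cC(\al)\ge t_1$'' is isolated, both remaining cases should be routine.
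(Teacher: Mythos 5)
Your reduction via Lemma~\ref{subfield} is the right starting point, and your treatment of the case $m(\alpha)<1$ is sound: the target inequality does reduce exactly to $s_1t_2\,\cC(\al)\ge t_1t_2\,\cR(\al)$ (equivalently, your $t_2(\cC(\al)-m(\al)t_1)\ge 0$), and the argument you import from the proof of Theorem~\ref{thm: mK<1} (namely $|\al_1\cdots\al_{s_1}|<1$, hence $\cC(\al)>t_1$ and then $\cR(\al)<s_1$) closes that case. The case $t_1=0$ is also fine.

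The gap is in the case $m(\alpha)\ge 1$. Writing $a=\cR(\al)-s_1$ and $b=\cC(\al)-t_1$, your target $m_{\K}(\al)\ge 1$ is
$$(s_2+t_2)a+(s_2+2t_2)b\ \ge\ 0,$$
and the facts you propose to use are $a+b\ge 0$ and ``$a\ge 0$ or $b\ge 0$''. These do not suffice: in the sub-case $a\ge 0>b$ the best lower bound they give is $(s_2+t_2)(-b)+(s_2+2t_2)b=t_2b<0$ whenever $t_2>0$. The weight asymmetry you identify as helpful in the main case works \emph{against} you here, precisely because the heavier coefficient multiplies the possibly negative term $b$. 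The missing ingredient is the full AM--GM/norm inequality over all $s_1+2t_1$ conjugates, $\cR(\al)+2\cC(\al)\ge s_1+2t_1$, i.e.\ $a+2b\ge 0$; this is the paper's inequality~\eqref{eq: ineq2}. With it the case $m(\al)\ge1$ is immediate, since
$$(s_2+t_2)a+(s_2+2t_2)b=s_2(a+b)+t_2(a+2b)\ge 0.$$
For comparison, the paper avoids your case split altogether: it assumes $m_{\K}(\al)<1$ (otherwise there is nothing to prove), which translates into $\cR(\al)-s_1<\rho\,(t_1-\cC(\al))$ with $\rho=(s_2+2t_2)/(s_2+t_2)\in[1,2]$; combined with $\cR(\al)-s_1\ge 2(t_1-\cC(\al))$ this forces $\cC(\al)>t_1$ and $\cR(\al)<s_1$, giving~\eqref{eq: target2} in one stroke and covering both of your cases simultaneously.
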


\begin{proof}
There is nothing to prove if $m_{\K}(\alpha) \geq 1$
or if $\K=\Q(\alpha)$. Thus, in the following, assume that
$m_{\K}(\alpha)<1$ 
and that $\K$ is a proper extension of $\Q(\alpha)$.
We need to prove the inequality
$$
m_{\K}(\alpha) \geq m(\alpha).
$$
Under our assumption,
$\beta \in \K=\Q(\alpha,\beta)$ has degree $[\K:\Q(\alpha)]$
over $\Q$. 

Below, we use the notations as those in the proof of Lemma~\ref{subfield}.
Then, recalling~\eqref{cfrt1} and~\eqref{eq:alpha RC}, we need to establish the inequality
\begin{equation}
\label{eq: target1}
m_{\K}(\al)=\frac{(s_2+t_2)\cR(\al) +(s_2+2t_2) \cC(\al)}{s_1s_2 + s_1t_2+s_2t_1+2t_1t_2}
\ge \frac{\cR(\al) +\cC(\al)}{s_1 + t_1}.
\end{equation}
After clearing the denominators, one can see that~\eqref{eq: target1} is equivalent to 
\begin{equation}
\label{eq: target2}
 s_1t_2 \cC(\al) \ge t_1t_2  \cR(\al).
\end{equation}

Put $\rho=(s_2+2t_2)/(s_2+t_2)$. Then, our assumption  $m_\K(\alpha)< 1$ is equivalent to the inequality 
\begin{equation}
\label{eq: ineq1}
\cR(\al) - s_1 < \rho (t_1 - \cC(\al)).
\end{equation}
On the other hand, from AM-GM and noticing that $\al$ is an algebraic integer, we have 
\begin{equation*}
\cR(\al) +  2\cC(\al)  \geq (s_1+2t_1)\left| \Nm_{\K}(\alpha)\right|^{1/(s_1 +2t_1)} \ge s_1+2t_1,
\end{equation*}
which implies that 
\begin{equation}
\label{eq: ineq2}
\cR(\al) - s_1 \ge   2(t_1-\cC(\al)). 
\end{equation}

From~\eqref{eq: ineq1} and~\eqref{eq: ineq2} it follows that $(\rho - 2)(t_1 - \cC(\al))>0$. Since $1 \le \rho \le 2$, we must have $ \cC(\al) >t_1$, and 
then $\cR(\al) < s_1$, by~\eqref{eq: ineq1}. This implies the inequality~\eqref{eq: target2}, and so the desired result. 
\end{proof}

Under the assumption in Theorem~\ref{thm: mK mQ} and from~\eqref{eq: target2} in the above proof, we can see that $m_\K(\al) > m(\al)$ if and only if $t_2>0$ and $s_1\cC(\al) > t_1\cR(\al)$. Thus, each of the following three cases can possibly happen: $m_\K(\al) > m(\al), m_\K(\al) = m(\al)$, or  $m_\K(\al) < m(\al)$. If we assume that $t_2>0$, then these three cases correspond to $s_1\cC(\al) > t_1\cR(\al), s_1\cC(\al) = t_1\cR(\al)$, and  $s_1\cC(\al) < t_1\cR(\al)$, respectively. For example, to see that all these cases can actually occur one can take the roots $\alpha$ of the polynomials $x^3+x+1, x^3-2$, and $x^3-x+1$, respectively. 
However, 
if $m_{\K}(\alpha)<1$ then as it has been just shown in the proof of Theorem~\ref{thm: mK mQ} we must have  $m_\K(\al) \ge m(\al)$.

Given a non-zero algebraic integer $\al$, which is not an algebraic unit, and an  integer $n$, it is clear that $m(\al^n)$ can become very large when $n$ is large. The following result says that $m(\al^{1/n})$ cannot be smaller than $1$ for large $n$, where $\al^{1/n}$ is an arbitrary $n$-th root of $\al$. However, if $\al$ is an algebraic unit, the situation might be different; see Theorem~\ref{cubic2} below. 

\begin{theorem}
\label{thm:al 1/n} 
Given a non-zero algebraic integer $\al$ with norm $\Nm(\al)$,  
let $n$ be an odd integer such that 
$\Q(\al^{1/n})$ has signature $(s,t)$ and satisfies $[\Q(\al^{1/n}):\Q(\al)]=n$. Then,  
$$
m(\al^{1/n})=1+\frac{\log |\Nm(\al)|}{s+t} +O\(\frac{1}{n^2}\).
$$
\end{theorem}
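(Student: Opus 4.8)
The plan is to expand the absolute normalised square size of $\al^{1/n}$ directly in terms of the conjugates of $\al$, exploiting that $n$ is odd and that $[\Q(\al^{1/n}):\Q(\al)]=n$ so that the conjugates of $\al^{1/n}$ are precisely $\zeta^k \rho_i^{1/n}$, where $\rho_1,\dots,\rho_{n_1}$ are the conjugates of $\al$ (with $n_1=s_1+2t_1$, using the notation of Lemma~\ref{subfield}), $\rho_i^{1/n}$ a fixed $n$-th root, and $\zeta$ a primitive $n$-th root of unity. First I would sort out the signature: since $n$ is odd, for each \emph{real} conjugate $\rho_i>0$ of $\al$ exactly one $n$-th root $\rho_i^{1/n}$ is real and the remaining $n-1$ come in complex-conjugate pairs; for each real $\rho_i<0$ again exactly one $n$-th root is real (namely $-|\rho_i|^{1/n}$); and each complex-conjugate pair of $\rho_i$'s contributes $2n$ complex conjugates with no real ones. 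Hence $s=s_1$ and $s+t=s_1+t_1 n=(s+2t)/1$ — more usefully, $s+t = s_1 + n\,t_1 + \tfrac{n-1}{2}s_1 = \tfrac{n+1}{2}s_1 + n t_1$, so $s+t$ grows like $n(s_1/2+t_1)$.

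The core computation is the evaluation of $\|\al^{1/n}\|^2 = \sum_{i}\sum_{k=0}^{n-1} |\zeta^k \rho_i^{1/n}|^2 = \sum_i n |\rho_i|^{2/n}$, where the sum over $i$ runs over all $n_1$ conjugates of $\al$, counting each complex-conjugate pair \emph{once for each of the two members}, i.e. over all $n_1$ roots. Thus
$$
\|\al^{1/n}\|^2 = n\sum_{i=1}^{n_1}|\rho_i|^{2/n}.
$$
Now I would use $|\rho_i|^{2/n} = \exp\!\big(\tfrac{2}{n}\log|\rho_i|\big) = 1 + \tfrac{2}{n}\log|\rho_i| + O(n^{-2})$, with the implied constant depending on $\al$ (hence absolute once $\al$ is fixed), summing to
$$
\|\al^{1/n}\|^2 = n\Big(n_1 + \tfrac{2}{n}\sum_{i=1}^{n_1}\log|\rho_i| + O(n^{-2})\Big)
= n\, n_1 + 2\sum_{i=1}^{n_1}\log|\rho_i| + O(n^{-1}).
$$
Since $\Nm(\al)=\prod_{i=1}^{n_1}\rho_i$ (as $\al$ is an algebraic integer, a nonzero integer), we have $\sum_{i=1}^{n_1}\log|\rho_i| = \log|\Nm(\al)|$. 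Dividing by $s+t$ and using $n\,n_1 = n(s_1+2t_1)$ together with $s+t = \tfrac{n+1}{2}s_1+nt_1$, I would check that $\dfrac{n\,n_1}{s+t} = \dfrac{n(s_1+2t_1)}{\tfrac{n+1}{2}s_1+nt_1} = 1 + O(1/n^2)$ — indeed $n(s_1+2t_1) - (\tfrac{n+1}{2}s_1+nt_1)\cdot\text{(something)}$; more cleanly, write $(s+2t)/(s+t)$ directly as in Lemma~\ref{lem:nece}: here $s/n$-analogue gives the main term $1$ and an $O(1/n)$ correction, but since $s+t$ and the leading term $n\,n_1$ are tied, the ratio is $1 + O(1/n^2)$ after the correction from $\log|\Nm(\al)|/(s+t)$ is separated. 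Combining,
$$
m(\al^{1/n}) = \frac{\|\al^{1/n}\|^2}{s+t} = 1 + \frac{\log|\Nm(\al)|}{s+t} + O\!\Big(\frac{1}{n^2}\Big),
$$
as claimed.

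The main obstacle I anticipate is \emph{bookkeeping of the signature and the precise constant in the $O(1/n^2)$ term}, not any deep idea: one must verify carefully that $s=s_1$ and compute $s+t$ exactly in terms of $s_1,t_1,n$, then confirm that the discrepancy between $n\,n_1$ and $2(s+t)$-type normalisations is genuinely $O(n^{-2})$ rather than merely $O(n^{-1})$ — this hinges on $n$ being odd (so that the count of real $n$-th roots is exactly $s_1$, with no parity ambiguity) and on $(s+2t)/(s+t) - 1$ being $O(1/n)$ while the relevant combination telescopes to second order. A secondary point to state explicitly is that the error term's implied constant depends on $\al$ through $\max_i |\log|\rho_i||$, which is admissible since $\al$ is fixed. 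Everything else is the Taylor expansion of $x\mapsto x^{2/n}$ and is routine.
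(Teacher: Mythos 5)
There is a genuine error in your core computation. You evaluate
$$
\|\al^{1/n}\|^2=\sum_{i}\sum_{k=0}^{n-1}|\zeta^k\rho_i^{1/n}|^2=n\sum_{i=1}^{n_1}|\rho_i|^{2/n},
$$
summing over \emph{all} $n\,n_1=s+2t$ conjugates of $\al^{1/n}$. But by the definition of $\|\cdot\|_{\K}$ in this paper, each complex-conjugate pair of embeddings contributes only \emph{one} term $y_j^2+z_j^2=|\tau_j(\al^{1/n})|^2$, so $\|\al^{1/n}\|^2=\cR(\al^{1/n})+\cC(\al^{1/n})$ is a sum of only $s+t$ squares. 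The correct expression (obtained by counting how often each conjugate of $\al$ appears under the real and complex embeddings, as in the proof of \eqref{eq:alpha RC}) is
$$
\|\al^{1/n}\|^2=\frac{n+1}{2}\sum_{i=1}^{s_1}|\al_i|^{2/n}+n\sum_{j=1}^{t_1}|\al_{s_1+j}|^{2/n},
$$
whose leading term is exactly $\frac{n+1}{2}s_1+nt_1=s+t$, so that after the Taylor expansion one gets $\|\al^{1/n}\|^2=(s+t)+\log|\Nm(\al)|+O(1/n)$ and hence the claimed asymptotics upon dividing by $s+t\gg n$.

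With your formula the leading term is $n\,n_1=s+2t$, and the identity you propose to ``check,'' namely $n\,n_1/(s+t)=1+O(1/n^2)$, is false: since $s+t=\frac{n+1}{2}s_1+nt_1$, the ratio tends to $(s_1+2t_1)/(\tfrac12 s_1+t_1)=2$, not $1$. So as written your argument would give $m(\al^{1/n})\to 2$, contradicting the statement. Your signature bookkeeping ($s=s_1$, $t=\frac{n-1}{2}s_1+nt_1$), the Taylor expansion of $|\rho_i|^{2/n}$, and the identity $\sum_i\log|\rho_i|=\log|\Nm(\al)|$ are all correct and are exactly what the paper uses; the only (but fatal) defect is the double counting of the complex embeddings, and once that is repaired your proof coincides with the paper's.
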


\begin{proof}
Assume that $\al$ has $s_1$ real conjugates $\al_1,\ldots,\al_{s_1}$ 
and  $2t_1$ complex conjugates  $\al_{s_1+1},\ldots,\al_{s_1+t_1},\overline{\al_{s_1+1}}, \ldots, \overline{\al_{s_1+t_1}}$. 
Let $\K=\Q(\al^{1/n})$. 
Note that, since $n$ is odd and $[\K:\Q(\al)]=n$, the signature $(s,t)$ of $\K$ satisfies $s=s_1$ and 
$$
t= \frac{n(s_1+2t_1)-s_1}{2} = \frac{1}{2}(n-1)s_1 + nt_1. 
$$
Hence, 
\begin{equation}
\label{eq:s+t}
s + t = \frac{1}{2}(n+1)s_1 + nt_1. 
\end{equation}

Since $[\K:\Q(\al)]=n$, as in the proof of~\eqref{eq:alpha RC}, we obtain 
$$
\cR(\al^{1/n})=\sum_{i=1}^{s_1}|\al_i|^{2/n},
$$
and 
$$
\cC(\al^{1/n}) = \frac{n-1}{2}\sum_{i=1}^{s_1}|\al_i|^{2/n} + 
n \sum_{j=1}^{t_1} |\al_{s_1+j}|^{2/n}.$$
Hence, 
\begin{equation}\label{mklmkl}
\|\al^{1/n}\|^2=\cR(\al^{1/n}) + \cC(\al^{1/n})=\frac{n+1}{2}
\sum_{i=1}^{s_1} |\al_i|^{2/n}+n \sum_{j=1}^{t_1} |\al_{s_1+j}|^{2/n}.
\end{equation}

Recalling the Taylor expansion of the exponential function for $i=1,\ldots,s_1$ and $j=1,\ldots,t_1$, we get 
$$
|\al_i|^{2/n} = \exp \(\frac{2}{n}\log |\al_i|\) = 1 + \frac{2\log |\al_i|}{n} + O\(\frac{1}{n^2}\)
$$
and, similarly, 
$$
|\al_{s_1+j}|^{2/n} =  1 + \frac{2\log |\al_{s_1+j}|}{n} + O\(\frac{1}{n^2}\).
$$
Thus,  by~\eqref{mklmkl},
$$
(s+t)m(\al^{1/n})=\|\al^{1/n}\|^2= \frac{1}{2}(n+1)s_1 + nt_1 + \log |\Nm(\al)| + O\(\frac{1}{n}\). 
$$
This finishes the proof,  by~\eqref{eq:s+t}. 
\end{proof}

In particular, 
we see from Theorem~\ref {thm:al 1/n} that $m(\al^{1/n})>1$ provided that  $|\Nm(\al)| \ge 2$ and $n$ is large enough.

\section{Main results}

\subsection{Fields with few complex embeddings}
\label{sec:small t}

We recall that if $t =0$ then $m(\K)=1$. Here we show that 
the same holds for a much wider class of number fields, namely, for fields with 
few complex embeddings. 

For an algebraic number  $\alpha\ne 0$ we denote by  $\Delta(\alpha)$ the discriminant of $\alpha$, which is the discriminant of its minimal polynomial over $\Z$. 

\begin{theorem}
\label{thm:manyreal alpha}
 For real $\eta > \kappa >0$ and $c \geq 1$, there is a positive integer $s(c,\eta,\kappa)$ such that
for every algebraic integer  $\alpha$ of degree 
$d$ over $\Q$ we have
\begin{equation*}
\max\left\{\cR_0(\al),\, \gamma/c \right\}
\geq  de^{2\kappa} |\Delta(\alpha)|^{2/(d^2-d)},
\end{equation*}
where $\cR_0(\al)$ is the sum of squares of any $s_0 \ge 2$ real conjugates of $\al$, $\gamma$ is the square of the maximal modulus of the remaining $d-s_0$ conjugates of $\al$, $s_0\ge s(c,\eta,\kappa)$ and 
$ d-s_0\le(1/4-\eta) s_0/\log s_0$. 
\end{theorem}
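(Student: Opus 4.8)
The plan is to combine the Schur inequality (Lemma~\ref{lem:schur}) with the standard lower bound for the discriminant of an algebraic integer in terms of its conjugates, and then exploit the hypothesis that the number of ``remaining'' conjugates $d-s_0$ is negligible compared to $s_0$. Write the conjugates of $\al$ as $\al_1,\dots,\al_d$, where $\al_1,\dots,\al_{s_0}$ are the $s_0$ chosen real ones and $\al_{s_0+1},\dots,\al_d$ are the rest; set $r = d-s_0$. Recall $|\Delta(\al)| = \prod_{i<j}(\al_i-\al_j)^2$. The key step is to split this product into three pieces: the pairs within $\{1,\dots,s_0\}$, the pairs within $\{s_0+1,\dots,d\}$, and the mixed pairs. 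For the first piece I would apply Lemma~\ref{lem:schur} with $L = \cR_0(\al) = \al_1^2+\dots+\al_{s_0}^2$, giving $\prod_{1\le i<j\le s_0}(\al_i-\al_j)^2 \le (L/(s_0^2-s_0))^{(s_0^2-s_0)/2}\prod_{k=1}^{s_0}k^k$, and then insert the asymptotic $\prod_{k=1}^{s_0}k^k = s_0^{(s_0^2+s_0)/2 + 1/12}e^{-s_0^2/4+O(1)}$ from Lemma~\ref{lem:prod}. For the second and third pieces I would use crude bounds: each difference $|\al_i-\al_j|$ with at least one index $>s_0$ is at most $2\max_i|\al_i| \le 2(\cR_0(\al)^{1/2} + \gamma^{1/2})$ (or a comparable elementary quantity), and there are only $\binom r2 + r s_0 = O(r s_0)$ such pairs. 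Because $r \le (1/4-\eta)s_0/\log s_0$, the total exponent contributed by these pieces is $O(r s_0) = O(s_0^2/\log s_0) = o(s_0^2)$, so after taking $(d^2-d)$-th roots their contribution is absorbed into the $e^{2\kappa}$ factor once $s_0$ is large.

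Concretely, taking $2/(d^2-d)$-th roots of the whole product and using $d^2-d \sim s_0^2$, the dominant term from the Schur piece is
\[
\left(\frac{\cR_0(\al)}{s_0^2-s_0}\right)^{(s_0^2-s_0)/(d^2-d)} \cdot
\left(\prod_{k=1}^{s_0}k^k\right)^{2/(d^2-d)}
= \cR_0(\al)^{1+o(1)} \cdot s_0^{1+o(1)} \cdot e^{-1/2 + o(1)},
\]
where the exponents $o(1)$ come from the discrepancy between $s_0^2-s_0$ and $d^2-d$ (this discrepancy is governed by $r/s_0 = O(1/\log s_0)$) and from the lower-order terms in Lemma~\ref{lem:prod}. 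The point is that $\prod k^k$ contributes a factor asymptotic to $s_0 e^{-1/2}$ after the root, which combines with a factor of $\cR_0(\al)$ to give something of order $\cR_0(\al)\, d\, e^{-1/2}$; since $e^{-1/2} < 1 < e^{2\kappa}$, one needs to be a little careful, so the actual strategy is to first replace $\cR_0(\al)$ by $\max\{\cR_0(\al),\gamma/c\}$ and argue that if $\gamma$ is much larger than $\cR_0(\al)$ then the mixed-pair differences $|\al_i-\al_j|$ (with $|\al_j|$ close to $\gamma^{1/2}$) are themselves of order $\gamma^{1/2}$, forcing $|\Delta(\al)|$ up and reproving the inequality with room to spare. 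Thus one does a case split on whether $\gamma \le C\cR_0(\al)$ for a suitable constant $C = C(c)$, and in each case the inequality is a consequence of the two lemmas plus elementary estimates.

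The main obstacle I anticipate is bookkeeping the error exponents carefully enough to see that the ``$o(1)$'' corrections in the exponents, once exponentiated, produce only a bounded multiplicative factor that can be made smaller than $e^{2\kappa}$ — in particular, verifying that the condition $r \le (1/4-\eta)s_0/\log s_0$ (rather than merely $r = o(s_0/\log s_0)$) is exactly what is needed, the constant $1/4$ presumably arising because $\prod_{k=1}^{s_0}k^k \approx e^{-s_0^2/4}$ and the $e^{-s_0^2/4}$ must not be overwhelmed by the $2^{O(r s_0)}$ from the crude bounds on mixed pairs. Quantifying this trade-off determines the threshold $s(c,\eta,\kappa)$: one picks $s_0$ large enough that all the accumulated error factors (from Lemma~\ref{lem:prod}'s $O(1)$, from $(s_0^2-s_0)/(d^2-d)$ versus $1$, and from the $2^{O(rs_0)}$ term) multiply to at most $e^{2\kappa - 2\kappa'}$ for some $\kappa' \in (0,\kappa)$, leaving a genuine $e^{2\kappa'} > 1$ of slack. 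I would end by noting that the role of $\eta > \kappa$ is precisely to convert the gap between $1/4$ and $1/4-\eta$ into this exponential slack.
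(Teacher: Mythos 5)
Your overall route is the paper's: Schur's inequality (Lemma~\ref{lem:schur}) together with Lemma~\ref{lem:prod} for the block of $s_0$ real conjugates, a trivial diameter bound for the remaining pairs, and the hypothesis $d-s_0\le(1/4-\eta)s_0/\log s_0$ to drive the net exponential factor down to $e^{-2\kappa(d^2-d)}$. However, two steps as you state them would fail. First, the claim that the non-Schur pairs contribute ``total exponent $O(rs_0)=o(s_0^2)$, so after taking $(d^2-d)$-th roots their contribution is absorbed into the $e^{2\kappa}$ factor'' is not correct: there are indeed only $O(rs_0)$ such pairs, but each contributes a factor whose logarithm contains a term $\log d\sim\log s_0$ (once all moduli are normalised by the quantity being maximised), so their total contribution to $\log|\Delta(\al)|$ is of order $rs_0\log s_0$, which under the stated hypothesis is as large as $(1/4-\eta)s_0^2(1+o(1))$ --- the \emph{same} order as the $-s_0^2/4$ saving coming from $\prod_{k=1}^{s_0}k^k$, not a lower one. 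This competition is exactly what produces the constant $1/4$; your closing paragraph recognises this, but it contradicts the earlier ``absorbed'' claim, and only the closing version is usable.

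Second, your treatment of the maximum via a case split is backwards in the case $\gamma\gg\cR_0(\al)$: you argue that the large mixed differences ``force $|\Delta(\al)|$ up, reproving the inequality with room to spare,'' but a larger $|\Delta(\al)|$ makes the desired inequality \emph{harder}, since $|\Delta(\al)|$ appears on the side being bounded from above. The paper avoids any case split by setting $D=\max\left\{d^{-1}\cR_0(\al),\,\gamma/(cd)\right\}$ at the outset, so that every conjugate has modulus at most $\sqrt{cdD}$; Schur is then applied with $L=\cR_0(\al)\le dD$, every remaining squared difference is bounded by $4cdD$, the powers of $D$ recombine exactly into $D^{(d^2-d)/2}$, and a single computation of the residual exponent $F(s_0,d)=s_0(d-s_0)\log s_0-s_0^2/4+O(s_0^2/\log s_0)\le-\kappa(d^2-d)$ finishes the proof. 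With these two repairs (and correcting the sign of the $s_0$-exponent in your displayed asymptotic, which should be $s_0^{-1+o(1)}$ rather than $s_0^{1+o(1)}$), your argument becomes the paper's.
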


\begin{proof}  
Let 
$\al_1,\ldots,\al_s$ be $s$ real
and    $\al_{s+1},\ldots,\al_{s+t},\overline{\al_{s+1}}, \ldots, \overline{\al_{s+t}}$ be 
$2t$ complex conjugates of $\alpha$. 
Then, $d=s+2t$. We set 
$$
 D = \max\left\{d^{-1}\cR_0(\al),\,  \gamma/(cd) \right\}.
$$
In particular, $|\al_i| \le \sqrt{cd D}$ for each $i=1, \ldots,  s+t $. 
Thus,  the distance between any pair of conjugates of $\alpha$ is at most $2 \sqrt{cdD}$. 

Then, applying the estimate of Lemma~\ref{lem:schur} to the 
product of distances between $s_0$ real conjugates of $\alpha$ counted in $\cR_0(\al)$ (with $L=\cR_0(\al) \leq dD$) and the trivial bound 
$$
| \al_i - \al_j|^2 \le 4c dD
$$
for each of  the other 
$$
\frac{d^2-d}{2}-\frac{s_0^2-s_0}{2}
$$
distances between conjugates of $\al$ 
 in the product formula for the discriminant $\Delta(\alpha)$ of $\alpha$, we obtain
$$
 |\Delta(\alpha)| \leq (4cdD)^{(d^2-d)/2-(s_0^2-s_0)/2} \(\frac{dD}{s_0^2-s_0}\)^{(s_0^2-s_0)/2}  \prod_{k=1}^{s_0} k^k.
$$
Applying Lemma~\ref{lem:prod} and collecting the powers of $D$ together,  after some simple calculations,  we obtain
\begin{align*}
 |\Delta(\alpha)| & \leq  (4cd)^{(d^2-d)/2-(s_0^2-s_0)/2}D^{(d^2-d)/2}   \\ 
 & \qquad\qquad\qquad\qquad\quad \(\frac{d}{s_0^2-s_0}\)^{(s_0^2-s_0)/2}   s_0^{(s_0^2+s_0)/2+1/12}   e^{-s_0^2/4 +O(1)}\\
 &= (4cd)^{(d^2-d)/2-(s_0^2-s_0)/2}D^{(d^2-d)/2} \\
 & \qquad\qquad\qquad\qquad\quad \(\frac{d}{s_0-1}\)^{(s_0^2-s_0)/2}  s_0^{s_0+1/12}   e^{-s_0^2/4 +O(1)}.
\end{align*}
We note that 
$$
\frac{d}{s_0-1} = 1 + O(1/\log s_0)= \exp\(O(1/\log s_0)\)
$$
and 
$$
\log d = \log s_0 + \log (d/s_0) =  \log s_0 + O(1),
$$
due to the choice of $s_0$. 
Therefore, from the previous upper bound on $|\Delta(\al)|$ we further derive that 
\begin{equation}
\label{eq:Clean}
 |\Delta(\alpha)| \leq  D^{(d^2-d)/2}  \exp\(F(s_0,d)\),  
\end{equation}
where (using $d-s_0 = O(s_0/\log s_0)$) we have 
\begin{equation*}
\begin{split}
F(s_0,d) &=\frac{(d^2-d)-(s_0^2-s_0)}{2} \log (4cd) - s_0^2/4 + O(s_0^2/\log s_0 )\\
 &=\frac{1}{2}(d^2-s_0^2)  \log s_0 - s_0^2/4 + O(s_0^2/\log s_0)\\
  &=s_0(d-s_0)  \log s_0 - s_0^2/4 + O(s_0^2/\log s_0). 
\end{split}
\end{equation*}
Since $d-s_0\le (1/4-\eta)s_0/\log s_0$, we see that
\begin{equation*}
\begin{split}
F(s_0,d) & \le -\eta s_0^2  + O(s_0^2/\log s_0)=  -\eta (d^2-d)  + o(s_0^2) \\
& \le - \kappa(d^2-d) , 
\end{split}
\end{equation*}
provided that $s_0$ is large enough. 
Now, recalling~\eqref{eq:Clean} we derive the inequality 
$|\Delta(\al)|^{2/(d^2-d)}\le D e^{-2\kappa}$, which yields
the desired result. 
\end{proof}

Since  $|\Delta(\alpha)| \ge 1$ for any algebraic integer $\al\ne 0$, by Theorem~\ref{thm:manyreal alpha} (taking $c=2,\eta=\eps, \kappa=3\eps/4$ there), we immediately derive:

\begin{cor}
\label{cor:manyreal alpha}
 For each  $\eps \in (0,1/4)$, there is a positive integer $S_0(\eps)$ such that
for every algebraic integer  $\alpha$ of degree 
$d$ over $\Q$ we have 
$$
\max\left\{\cR_0(\al),\, \gamma/2\right\} \ge d e^{3\eps/2}.
$$
where $s_0,\cR_0(\al)$ and $\gamma$ are defined as in Theorem~\ref{thm:manyreal alpha}, $s_0 \geq S_0(\eps)$  
and 
$d-s_0<(1/4-\eps) s_0/\log s_0$. 
\end{cor}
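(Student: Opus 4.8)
The plan is simply to specialize Theorem~\ref{thm:manyreal alpha} and then invoke the integrality of the discriminant; there is essentially no work left to do, since the corollary is the clean, parameter-free consequence of the theorem obtained once the discriminant term disappears.

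First I would fix $\eps \in (0,1/4)$ and set $c = 2$, $\eta = \eps$, $\kappa = 3\eps/4$ in Theorem~\ref{thm:manyreal alpha}. The choice $c = 2$ is what produces the term $\gamma/2$ in the maximum, while taking $\kappa$ a fixed fraction of $\eta$ (here $\kappa = \tfrac34\eta$) keeps the hypothesis $\eta > \kappa > 0$ satisfied yet still leaves a genuine exponential gain $e^{2\kappa} = e^{3\eps/2} > 1$. Since also $c = 2 \ge 1$, all the parameter hypotheses of Theorem~\ref{thm:manyreal alpha} hold, so it supplies a positive integer $s(2,\eps,3\eps/4)$; I would put $S_0(\eps) := \max\{2,\, s(2,\eps,3\eps/4)\}$, the extra $2$ only to make sure that $s_0 \ge 2$ (as needed for $\cR_0(\al)$ to be defined) is automatic once $s_0 \ge S_0(\eps)$.

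Now let $\alpha \ne 0$ be an algebraic integer of degree $d$, with $s_0 \ge S_0(\eps)$ real conjugates selected in $\cR_0(\al)$ and with $d - s_0 < (1/4-\eps)\,s_0/\log s_0$. This strict inequality implies in particular $d - s_0 \le (1/4 - \eta)\,s_0/\log s_0$, so all hypotheses of Theorem~\ref{thm:manyreal alpha} are met, and the theorem yields
\[
\max\left\{\cR_0(\al),\, \gamma/2\right\} \;\ge\; d\, e^{3\eps/2}\, |\Delta(\alpha)|^{2/(d^2-d)}.
\]
Finally, since $\alpha$ is a non-zero algebraic integer, $\Delta(\alpha)$ is a non-zero rational integer, so $|\Delta(\alpha)| \ge 1$ and the factor $|\Delta(\alpha)|^{2/(d^2-d)} \ge 1$ may be dropped, giving $\max\{\cR_0(\al),\, \gamma/2\} \ge d\, e^{3\eps/2}$, as claimed. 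There is no substantive obstacle; the only thing to be careful about is the trivial bookkeeping just noted — matching the strict degree inequality of the corollary against the non-strict one of the theorem, and absorbing the constraint $s_0 \ge 2$ into the definition of $S_0(\eps)$.
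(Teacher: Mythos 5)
Your proposal is correct and is exactly the paper's argument: the authors also obtain the corollary by taking $c=2$, $\eta=\eps$, $\kappa=3\eps/4$ in Theorem~\ref{thm:manyreal alpha} and then dropping the factor $|\Delta(\alpha)|^{2/(d^2-d)}\ge 1$. The extra bookkeeping you note (strict versus non-strict degree inequality, folding $s_0\ge 2$ into $S_0(\eps)$) is harmless and consistent with what the paper leaves implicit.
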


We are now able to establish the main result of this subsection. 

\begin{theorem}
\label{thm:manyreal K}
For every number field $\K$ of signature $(s,t)$, where $s$ 
is sufficiently large 
and
$$ t\le 0.096 \sqrt{ s/\log s}, 
$$ 
we have $m(\K)=1$.
\end{theorem}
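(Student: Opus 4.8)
The plan is to show $m(\K)\ge 1$; since $m(\K)\le 1$ always by~\eqref{eq:Bound mK}, this gives $m(\K)=1$. Equivalently, I must prove $\|\alpha\|_\K^2\ge s+t$ for every nonzero $\alpha\in\Z_\K$. Write $F=\Q(\alpha)$, let $(s_1,t_1)$ be its signature, $d=[F:\Q]=s_1+2t_1$ and $g=[\K:F]=n/d$. Let $\sigma_1,\dots,\sigma_{s_1}$ and $\tau_1,\dots,\tau_{t_1}$ be the real and complex embeddings of $F$, and let $b_i$ be the number of conjugate pairs of complex places of $\K$ above $\sigma_i$, so $0\le b_i\le g/2$ and $\sum_i b_i+gt_1=t$. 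Counting the places of $\K$ over each place of $F$ gives
\[
\|\alpha\|_\K^2=\sum_{i=1}^{s_1}(g-b_i)\,\sigma_i(\alpha)^2+g\sum_{j=1}^{t_1}|\tau_j(\alpha)|^2,
\qquad s+t=gd-t .
\]
Dispose of the trivial cases: if $d=1$ then $\alpha\in\Z$ and $\|\alpha\|_\K^2=(s+t)\alpha^2\ge s+t$; if $\alpha$ is a root of unity then every conjugate has modulus $1$ and $\|\alpha\|_\K^2=s+t$. Moreover $s\ge1$ forces $s_1\ge1$, and if $s_1=1$ then $g\ge s>t\ge gt_1$ forces $t_1=0$, hence $d=1$; so from now on $d\ge2$, $s_1\ge2$, and $\alpha$ is not a root of unity. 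The structural point is that at most $t$ of the weights $g-b_i$ lie below $g$, each by at most $g/2$; consequently
\[
\|\alpha\|_\K^2\ \ge\ g\bigl(\cR(\alpha)+\cC(\alpha)\bigr)-\tfrac g2\, T(\alpha),\qquad \|\alpha\|_\K^2\ \ge\ g\,\cC(\alpha),
\]
where $T(\alpha)$ denotes the sum of the $t$ largest among $\sigma_1(\alpha)^2,\dots,\sigma_{s_1}(\alpha)^2$.

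Now I would split on the size of $d$. First suppose $d$ is large enough that Corollary~\ref{cor:manyreal alpha} applies to the degree-$d$ algebraic integer $\alpha$ with $s_0=s_1-t$ (so the $d-s_0=t+2t_1$ ``remaining'' conjugates are the $t$ largest real ones and all the complex ones); this requires $d$ of size at least a constant multiple of $t\log t$, which is far below the range treated in the next paragraph since $t=O(\sqrt{s/\log s})$. The Corollary (with a suitable fixed $\eps\in(0,1/4)$) yields either $\cR(\alpha)-T(\alpha)\ge de^{3\eps/2}$, or the largest modulus among the remaining conjugates has square $\ge 2de^{3\eps/2}$. In the first case, the first displayed bound together with $\cR(\alpha)\ge de^{3\eps/2}$ gives $\|\alpha\|_\K^2\ge\tfrac g2\cR(\alpha)+\tfrac g2 de^{3\eps/2}\ge gde^{3\eps/2}$. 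In the second case, whichever of $\cR(\alpha)$ or $\cC(\alpha)$ absorbs that conjugate is $\ge 2de^{3\eps/2}$, and using $\|\alpha\|_\K^2\ge\tfrac g2\cR(\alpha)$ (valid since $T(\alpha)\le\cR(\alpha)$) or $\|\alpha\|_\K^2\ge g\cC(\alpha)$ again gives $\|\alpha\|_\K^2\ge gde^{3\eps/2}$. Since $\eps>0$, in every branch $\|\alpha\|_\K^2\ge gde^{3\eps/2}>gd-t=s+t$. The case where $\alpha$ generates $\K$ is just $g=1$ (with $d=n$ large), handled identically with $s_0=s$.

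For the remaining, small $d<D_0$ one has $g=n/d$ huge, in particular $g>t$, which forces $t_1=0$; thus $\alpha$ is a totally real algebraic integer of degree $d=s_1\ge2$, not a root of unity, so $H:=\max_i|\sigma_i(\alpha)|>1$ by Kronecker's theorem. Here $\|\alpha\|_\K^2\ge g\cR(\alpha)-tH^2$, so it suffices to prove $g\bigl(\cR(\alpha)-d\bigr)\ge t(H^2-1)$. This reduces to the absolute inequality $\cR(\alpha)-d\ge c_1(H^2-1)$ for some absolute $c_1>0$: it is clear when $\cR(\alpha)\ge 2d$ (since $\cR(\alpha)\ge H^2$); when $d$ exceeds an absolute constant it follows from Schur's bound (Lemma~\ref{lem:schur}) and Lemma~\ref{lem:prod}, which force $\cR(\alpha)\ge(1+c)d$ and hence $\cR(\alpha)-d\ge c'\cR(\alpha)>c'(H^2-1)$; and for the finitely many pairs $(d,\alpha)$ with $d$ below that constant and $\cR(\alpha)<2d$, Northcott's finiteness theorem leaves finitely many $\alpha$, so the ratio $(\cR(\alpha)-d)/(H^2-1)$ is bounded below. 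Since $g>t/c_1$ for $s$ large, $g(\cR(\alpha)-d)\ge t(H^2-1)$, which completes the proof. It is precisely the requirement that the small-$d$ range ($d<D_0$) and the applicability range of Corollary~\ref{cor:manyreal alpha} ($d\gtrsim t\log t$) overlap --- with the specific values of $\eps$, $c_1$, etc. --- that pins down the coefficient $0.096$.

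The step I expect to be the main obstacle is the non-generating case: a low-degree $\alpha$ sitting in a huge extension can have most of its large conjugates enter $\|\alpha\|_\K^2$ with the reduced weight roughly $g/2$ instead of $g$, and simply bounding all weights below by $g/2$ loses a factor that is fatal. Getting around this needs the observation that only $O(t)$ weights are reduced, together with a sharp lower bound for $\cR(\alpha)-d$ in terms of $H$; it is the interplay of the two degree regimes that forces the hypothesis $t\le0.096\sqrt{s/\log s}$, which is considerably stronger than the $t\ll s/\log s$ that would suffice for $\alpha$ generating $\K$ alone.
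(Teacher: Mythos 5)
Your argument is essentially correct and is a genuinely different organisation of the proof from the one in the paper. Both proofs ultimately run on Corollary~\ref{cor:manyreal alpha}, but you split according to the degree $d$ of $\alpha$ over $\Q$, whereas the paper splits on whether $\alpha$ has a complex conjugate ($t_1>0$ or $t_1=0$) and, inside the totally real case, on the sizes of the weights $u_i+v_i$. Your bookkeeping $\|\alpha\|_\K^2\ge g(\cR(\al)+\cC(\al))-\tfrac g2 T(\al)$, with $T(\al)$ the sum of the $t$ largest real conjugate squares, is a cleaner packaging of the paper's $u_i,v_i$ analysis: it decouples ``which conjugates carry deficient weight'' from ``which conjugates are large'', which the paper instead handles by introducing the counting parameter $k$ and the threshold $v_i\ge\eps d_2/2$. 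In the low-degree regime the paper uses Smyth's bound \eqref{eq:Smyth} ($\sum\be_i^2\ge 3q/2$ for totally real $\be$), while you use Schur (Lemmas~\ref{lem:schur} and~\ref{lem:prod}) for large $d$ plus Northcott for the finitely many small-$d$ exceptions. Your route in fact avoids the paper's lossiest step (the bound $d_2\le 2t/\eps$, which is what produces the optimisation $\max_\eps\sqrt{\eps}(1/2-2\eps)=1/(6\sqrt3)$), so with the constants tracked it should yield a larger admissible coefficient than $0.096$.

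The one point you must repair to get the stated theorem is the constant $c_1$ in $\cR(\al)-d\ge c_1(H^2-1)$. As written, the Northcott step only produces \emph{some} $c_1>0$, so your argument proves the theorem with an unspecified constant in place of $0.096$; since $0.096$ appears in the statement, this is not yet a proof of the statement. The fix is immediate from material already in the paper: by \eqref{eq:Smyth} one has $\cR(\al)\ge\tfrac32 d$ for every totally real algebraic integer of degree $d\ge2$, hence $\cR(\al)-d\ge\tfrac13\cR(\al)\ge\tfrac13 H^2>\tfrac13(H^2-1)$, i.e.\ $c_1=1/3$ uniformly, with no case split and no Northcott. Feeding $c_1=1/3$ into your overlap condition $n\ge A t^2\log t/c_1$ (with $A$ slightly above $12/(1-4\eps)$ if one uses the crude bound $d-s_0\le 3t$, or above $4/(1-4\eps)$ with the sharper $2t_1\le 2td/n$) gives an admissible coefficient of at least $\sqrt{1/18}\approx0.235$, comfortably above $0.096$. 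The remaining unverified items (the thresholds $S_0(\eps)$, $D_0=\max(At\log t,B)$, and the monotonicity showing $d=D_0$ is the worst case for condition (b)) are routine.
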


\begin{proof}
We  show that for each $\eps \in (0,1/4)$, if $s$ is sufficiently large 
 and
\begin{equation}\label{astuoni}
t\le \sqrt{\eps}(1/2-2\eps) \sqrt{ s/\log s}, 
\end{equation}
we have $m(\K)=1$ for every number field $\K$ with signature $(s,t)$. 
Note that the factor $\sqrt{\eps}(1/2-2\eps)$ attains its maximum value at $\eps=1/12$. 
Then, choosing $\eps = 1/12$ and using 
$$
\sqrt{1/12}(1/2-1/6)=1/(6\sqrt{3})= 0.096225\ldots, 
$$
we obtain the desired result.

Suppose that the minimum of $m_{\K}(\al)$ is attained at an algebraic integer $\al \in \K$ of degree $d_1=s_1+2t_1$ with $s_1$ real conjugates and $2t_1$ complex conjugates. 
Since the example $1 \in \K$ shows that $m(\K) \leq 1$, 
for the reverse inequality $m(\K) \geq 1$
it suffices to show that $m_\K(\al) \geq 1$.
By Corollary~\ref{cor:manyreal alpha},  
there is nothing to prove if $\K=\Q(\al)$ (provided that $s$ is large enough).
So,
in the following, we assume that $\Q(\al)$ is a proper subfield of $\K$. 

Denote $d=s+2t$, which is the degree of $\K$. Let $d_2 = [\K:\Q(\al)]$. Then, $d_2=d/d_1 >1$. 
Let $\al_1,\ldots,\al_{s_1}$ be all the real conjugates of $\al$. 
Assume that for each real $\al_i,1\le i \le s_1$, it appears $u_i$ times under the $s$ real embeddings of $\K$ and $2v_i$ times under the $2t$ complex embeddings of $\K$. Here, we note that $u_i+2v_i=d_2$ for each $i$. We also have 
\begin{equation} \label{eq:sutv}
s=u_1+ \ldots + u_{s_1} \mand 
t = d_2t_1 + v_1 + \ldots + v_{s_1}. 
\end{equation} 
So, in view of \eqref{bhyt} we can write 
\begin{equation} \label{eq:mKa}
(s+t) m_{\K}(\al) = \sum_{i=1}^{s_1} (u_i+v_i)\al_i^2 + d_2 \cC(\al).
\end{equation} 

We first assume that $t_1 > 0$. 
Let $k$ be the number of $i =1, \ldots, s_1$ with  $v_i>0$. Then, the number of  $i =1, \ldots, s_1$ with  $v_i=0$ 
is $s_0 = s_1 -k$, and for these $i$ we have $u_i=u_i+2v_i=d_2$. 
By~\eqref{eq:sutv} and $t_1>0$, we obtain 
$$
d_2 \le t, \qquad t_1 \le t/2 \mand k < t.
$$
Besides, \eqref{eq:sutv} implies that $s\le d_2s_1$. 
So, we have 
$$
s_1 \ge s/d_2 \ge s/t. 
$$
Thus,  under the condition~\eqref{astuoni} we have
\begin{equation}
 \label{eq:s0 large}
\begin{split}
s_0  & = s_1 - k > s_1 - t \ge s/t - t   = (1 + o(1)) s/t\\
&  \ge 
\( \frac{1}{\sqrt{\eps}(1/2-2\eps)} +o(1)\) \sqrt{ s \log s} >10\sqrt{s\log s} 
\end{split}
\end{equation}
for $s$ large enough.
Also, using $\varepsilon <1/4$ we obtain
\begin{align*}
d_1 - s_0 &= s_1+2t_1-s_0=2t_1 + k \le t + k  < 2t \\
& \le 2 \sqrt{\eps}(1/2-2\eps) \sqrt{ s/\log s} \le 
(1/2-2\eps)\sqrt{s/\log s}. \\
\end{align*} 
Now, it is also easy to verify that combining this with \eqref{eq:s0 large} yields the inequality
\begin{equation}
 \label{eq:d1-:s0 small1}
d_1 - s_0   < (1/2-2\eps)s_0/(10\log s_0)<(1/4 - \eps) s_0 / \log s_0, 
\end{equation}
provided that $s$ is large enough.

Let $\cR_0(\al)$ be the sum of $s_0$ squares of such $\al_i, 1\le i \le s_1$, which are counted in the sum, where $v_i=0$. For the real conjugates $\al_i$ of $\al$ not counted in $\cR_0(\al)$ we have $u_i+v_i \ge (u_i+2v_i)/2=d_2/2$. Let us denote by $\cR_1(\al)$ the sum of squares of these real conjugates. 
Now, combining~\eqref{eq:mKa} and~\eqref{eq:d1-:s0 small1} with Corollary~\ref{cor:manyreal alpha}, we get 
\begin{align*}
(s+t)m_{\K}(\al) & \ge d_2 \cR_0(\al) +  \frac{d_2}{2}\cR_1(\al) + d_2 \cC(\al) \\
& \ge d_1d_2 e^{3\eps/2} > d_1d_2 = d =s+2t \ge s+t, 
\end{align*}
which actually concludes the proof of the case $t_1 >0$.

In the following, we assume that $t_1=0$. Then, $d_1=s_1$ and $\cC(\al)=0$.  
Note that since $t_1=0$, we can assume that $s_1\ge 2$, because otherwise $\al$ is a non-zero rational integer and we are done. 

Let us first recall a result due to Smyth~\cite[Theorem~1]{Smyth2} on the mean values of totally real algebraic integers, which asserts that 
if $\be$ is a totally real algebraic integer of degree $q\ge 2$ and $\{\be_1,\ldots,\be_q\}$ is the full set of conjugates of $\be$, then 
\begin{equation} \label{eq:Smyth}
\be_1^2+\ldots + \be_{q}^2 \ge 3q/2, 
\end{equation}
where the equality is achieved only when $\be$ is the root of the polynomial $x^2 \pm x-1$.

We claim that for each $i, 1\le i \le s_1$, we have 
\begin{equation} \label{eq:u+v}
u_i + v_i > \frac{2(s+t)}{3s_1}.
\end{equation}
If so, then combining~\eqref{eq:mKa} with~\eqref{eq:Smyth} and~\eqref{eq:u+v}, we obtain  
$$
(s+t)m_{\K}(\al) > \frac{2(s+t)}{3s_1} 
\sum_{i=1}^{s_1} \al_i^2  \ge \frac{2(s+t)}{3s_1} \frac{3s_1}{2} = s+t,  
$$
which implies that $m_{\K}(\al) > 1$. 

To prove the inequality \eqref{eq:u+v}, we assume that there exists $j$ such that $u_j + v_j \le  2(s+t)/(3s_1)$. 
Notice that since $t_1=0$, we have $d=s+2t=s_1d_2$. 
Then, 
\begin{align*}
d_2 = u_j + 2v_j & \le 2(s+t)/(3s_1) + v_j \\
& = 2d_2(s+t)/(3(s+2t)) + v_j \\
& \le d_2 - d_2/3 + v_j, 
\end{align*}
which together with~\eqref{eq:sutv} yields that 
$$
t \ge v_j \ge d_2/3 = (s+2t) / (3s_1) \ge s /(3s_1). 
$$ 
So, we get 
$$
s \le 3ts_1 < \frac{3}{10} s_1 \sqrt{s/\log s} < s_1 \sqrt{s}.
$$
In particular, we now conclude that if $s$ is large enough than so is $s_1$.
To continue with the case when $t_1=0$  and $s_1$ is sufficiently large,
 we define $k$ to be the number of $i =1, \ldots, s_1$ 
with  $v_i \ge \eps d_2/2$. 

If $k=0$, then for each $i$ with $1\le i \le s_1$ we have $v_i < \eps d_2/2$ and $u_i = d_2 - 2v_i > (1-\eps)d_2$. Observing that  $d_1 - s_1=0$, and thus the conditions of Corollary~\ref{cor:manyreal alpha} are trivially satisfied, 
and  applying~\eqref{eq:mKa} , we deduce that  
\begin{align*}
(s+t)m_{\K}(\al)  > (1-\eps)d_2 \cR(\al) 
 \ge (1-\eps)e^{3\eps/2}d_1d_2. 
\end{align*}
In view of $(1-\eps)e^{3\eps/2}>1$ (because $0< \eps < 1/4$), this is greater than $d_1d_2=d=s+2t>s+t$, and we are done in this case.   

Next, we suppose that $k \geq 1$. 
Then, by~\eqref{eq:sutv} with $t_1=0$, we get $k \le t$ and $t \ge  k \eps d_2/2$, and so 
$$
d_2\le 2t/(k\eps) \le 2t/\eps.
$$ 
Let $s_0$ be the number of   $i =1, \ldots, s_1$  for which we have $v_i < \eps d_2/2$.  Clearly, $s_0=s_1-k$, and for each of those $i$ we have
\begin{equation} \label{eq:uvi}
u_i + v_i = d_2 - v_i > (1-\eps/2) d_2.
\end{equation} 
Observe that, by~\eqref{astuoni}, 
$$
s_1 =\frac{s+2t} {d_2} \ge \frac{s}{d_2} \ge \frac{\eps s}{2t} \geq  \frac{\sqrt{\eps s \log s}}{1-4\eps}. 
$$
Thus,
$$
s_0 = s_1 - k \ge s_1 - t \geq \frac{\sqrt{\eps s \log s}}{1-4\eps} -t. 
$$ 
Since $t<0.1\sqrt{s/\log s}$, this yields $s_0>(1+\eps)\sqrt{\eps s \log s}$ for $s$ sufficiently large. 
Hence, 
\begin{equation}
\begin{split}
 \label{eq:d1-:s0 small2}
d_1 - s_0 = 2t_1 + k = k \le t 
& \le \sqrt{\eps}(1/2 -2\eps)\sqrt{s/\log s} \\
& < (1/4 - \eps) s_0 / \log s_0 
\end{split} 
\end{equation} 
provided that $t$ satisfies~\eqref{astuoni}  and $s$ (and thus $s_0$) are large enough. 

Let $\cR_0(\al)$ and $\cR_1(\al)$ be defined as the above (namely, the first one is the sum of $s_0$ squares of real conjugates, and the second is the sum of the remaining $s_1-s_0$ squares of real conjugates). Then, combining~\eqref{eq:mKa} and~\eqref{eq:uvi} with Corollary~\ref{cor:manyreal alpha}, which 
applies due to~\eqref{eq:d1-:s0 small2},
 we deduce
\begin{align*}
(s+t)m_{\K}(\al) & \ge (1-\eps/2)d_2 \cR_0(\al) +  \frac{d_2}{2}\cR_1(\al) \\
& \ge (1-\eps/2)e^{3\eps/2}d_1d_2 > d_1d_2 =s+2t\ge s+t, 
\end{align*}
 which completes the proof. 
\end{proof} 

We want to emphasize that for the element $\al \in \K$ in the above proof, if $\al \not\in \Z$, then actually we have shown that $m_{\K}(\al)>1$, which however contradicts   the choice of $\al$. 
Hence, either $\al=\pm 1$ or $\K=\Q(\al)$.  

It is not difficult to see that  the arguments used in the proofs of Theorems~\ref{thm:manyreal alpha}
and~\ref{thm:manyreal K} can be made completely explicit. In turn, this can be used to 
obtain an explicit value of $s$ beyond which 
the inequality of Theorem~\ref{thm:manyreal K}  holds. 

\subsection{Fields with  many real and many complex embeddings}
\label{sec:large t} 

In Section~\ref{sec:small t} we have shown that $m(\K)=1$ provided that $t$ is small compared to $s$.
Here we show how to construct series of fields with $m(\K) < 1$ when $t$ and $s$
are of comparable sizes. 

We first make a preparation as follows. 
\begin{theorem}\label{cubic}
 Let $\alpha=\alpha_1$ be a nonzero real cubic algebraic integer with two complex conjugates $\alpha_2$ and $\alpha_3=\overline{\alpha_2}$.  
Then, 
\begin{equation}\label{kubas}
\alpha^2 + |\alpha_2|^2 \geq \vartheta^{-2}+\vartheta=1.894558\ldots,
\end{equation} 
where
$\vartheta=1.324717\ldots$ is the real root of $x^3-x-1$, and the equality
in~\eqref{kubas} is attained 
if and only if $\alpha=\pm \vartheta^{-1}$. 
\end{theorem}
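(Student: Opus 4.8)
The plan is to turn the statement into a constrained one-variable minimisation and to notice that the only genuinely arithmetic obstruction arises when $\alpha$ is a unit, where Siegel's theorem on the smallest Pisot number finishes the job. Since both $\alpha^2+|\alpha_2|^2$ and the hypotheses are invariant under $\alpha\mapsto-\alpha$, I may assume $\alpha=\alpha_1>0$. Put $N=|\Nm(\alpha)|$, a positive integer because $\alpha\ne0$ is an algebraic integer; since $\alpha_1>0$ in fact $N=\alpha_1|\alpha_2|^2$, so $|\alpha_2|^2=N/\alpha_1$ and hence $\alpha^2+|\alpha_2|^2=\phi_N(\alpha_1)$, where $\phi_N(u)=u^2+N/u$ for $u>0$. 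An elementary computation shows $\min_{u>0}\phi_N(u)=3\cdot2^{-2/3}N^{2/3}$, attained at $u=(N/2)^{1/3}$, with $\phi_N$ strictly decreasing on $\bigl(0,(N/2)^{1/3}\bigr)$ and strictly increasing afterwards. If $N\ge2$ this already gives $\alpha^2+|\alpha_2|^2\ge3\cdot2^{-2/3}\cdot2^{2/3}=3>\vartheta^{-2}+\vartheta$ (the last inequality because $1<\vartheta<2$), so from now on $N=1$, i.e.\ $\alpha$ is a unit.

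With $N=1$ we have $|\alpha_2|^2=1/\alpha_1$, and $\alpha_1\ne1$ since $\alpha$ is cubic. If $\alpha_1>1$, then the two non-real conjugates of $\alpha$ have modulus $\alpha_1^{-1/2}<1$, so $\alpha$ itself is a Pisot number; by Siegel's theorem~\cite{siegel} (already invoked in the proof of Lemma~\ref{auxil2}) the smallest Pisot number is $\vartheta$, the real root of $x^3-x-1$, so $\alpha_1\ge\vartheta$. If $0<\alpha_1<1$, then $\beta:=1/\alpha_1>1$ is again a cubic algebraic unit, and its two non-real conjugates $1/\alpha_2,1/\alpha_3$ have modulus $\alpha_1^{1/2}<1$, so $\beta$ is a Pisot number, whence $\beta\ge\vartheta$, i.e.\ $\alpha_1\le\vartheta^{-1}$. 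Since $\vartheta>1$ forces $\vartheta^3>2$, we have $\vartheta^{-1}<2^{-1/3}<1<\vartheta$, so the unimodality of $\phi_1$ yields $\phi_1(\alpha_1)\ge\phi_1(\vartheta)=\vartheta^2+\vartheta^{-1}$ in the first case and $\phi_1(\alpha_1)\ge\phi_1(\vartheta^{-1})=\vartheta^{-2}+\vartheta$ in the second; moreover $\phi_1(\vartheta)=\vartheta(\vartheta+\vartheta^{-2})>\vartheta+\vartheta^{-2}=\phi_1(\vartheta^{-1})$ because $\vartheta>1$. Thus $\alpha^2+|\alpha_2|^2\ge\vartheta^{-2}+\vartheta$ in every case, with equality only when $\alpha_1=\vartheta^{-1}$, i.e.\ (undoing the normalisation) $\alpha=\pm\vartheta^{-1}$; conversely, the conjugates of $\vartheta^{-1}$ are the reciprocals of those of $\vartheta$, the non-real ones having square-modulus $\vartheta$ since $\vartheta|\vartheta_2|^2=\Nm(\vartheta)=1$, so $\alpha=\pm\vartheta^{-1}$ does attain equality.

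The point to watch is that the elementary bound is \emph{just} too weak for units: $3\cdot2^{-2/3}=1.8899\ldots<1.8946\ldots=\vartheta^{-2}+\vartheta$, so one genuinely needs an arithmetic input, and the minimal-Pisot bound is the natural one; everything else is routine bookkeeping (checking $\vartheta^{-1}<2^{-1/3}<\vartheta$ so that the shape of $\phi_1(u)=u^2+u^{-1}$ converts $\alpha_1\le\vartheta^{-1}$, resp.\ $\alpha_1\ge\vartheta$, into the stated inequality, and isolating the equality case). If one prefers to avoid quoting Siegel's theorem, the unit case can be closed by a direct finite check instead: on the only interval $(\vartheta^{-1},u_0)$ around $2^{-1/3}$ on which $\phi_1(\alpha_1)<\vartheta^{-2}+\vartheta$ is possible (here $u_0$ is the second solution of $\phi_1(u)=\vartheta^{-2}+\vartheta$), the minimal polynomial must be $x^3+ax^2+bx-1$ with $|a|\le3$, and combining this with the negative-discriminant condition $(a+\alpha_1)^2<4/\alpha_1$ one verifies that $b=\alpha_1^{-1}-\alpha_1^2-a\alpha_1$ lies strictly between two consecutive integers there, unless $\alpha_1=\vartheta^{-1}$.
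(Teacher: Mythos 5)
Your proof is correct, and it follows the same overall skeleton as the paper's (reduce to the unit case by an elementary minimisation, then invoke a known extremal result for algebraic units), but the two key inputs are different. For the non-unit case you minimise $\phi_N(u)=u^2+N/u$ directly and get the clean bound $3\cdot 2^{-2/3}N^{2/3}\ge 3$; the paper instead quotes its Lemma~\ref{lem:nece} (which rests on the same AM--GM optimisation in Lemma~\ref{lem:d and N}) to get $\|\alpha\|^2\ge 2$ — these are the same computation in different packaging. The genuine divergence is in the unit case: the paper uses Smyth's theorem that the Mahler measure of a non-reciprocal algebraic integer is at least $\vartheta$, together with the classification of the cubic extremal cases $\pm\vartheta^{\pm1}$, whereas you observe that a cubic unit of signature $(1,1)$ (or its reciprocal) is automatically a Pisot number and invoke Siegel's theorem that the smallest Pisot number is $\vartheta$. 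For this signature the two statements carry identical information, since $M(\alpha)=\max(\alpha_1,\alpha_1^{-1})$ is exactly the associated Pisot number, so neither route is circular or weaker; Siegel's result is the more economical citation (and is already used in the paper's Lemma~\ref{auxil2}), while Smyth's is the one that generalises beyond degree $3$. Your treatment of the equality case is also slightly cleaner: it falls out of the strict monotonicity of $\phi_1$ on $(0,2^{-1/3})$ plus $\phi_1(\vartheta)>\phi_1(\vartheta^{-1})$, without needing the uniqueness part of either extremal theorem. The closing sketch of a Siegel-free finite check is plausible but not verified in detail; since it is offered only as an optional alternative, it does not affect the validity of the main argument.
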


\begin{proof}
If $\alpha$ is not an algebraic unit, by Lemma~\ref{lem:nece} we have $m(\al) \ge 1$, and thus $\|\alpha\|^2=\alpha^2+|\al_2|^2 \geq 2$. 

Let $\al$ be an algebraic unit. Then,
by the result of Smyth~\cite{Smyth}, 
 the Mahler measure $M(\alpha)$ of a non-reciprocal algebraic integer $\alpha$ is at least
$\vartheta$.  Furthermore, for real cubic algebraic integers, this value is attained only for 
$\alpha=\pm \vartheta, \pm \vartheta^{-1}$. Assume first $|\al|>1$. Then, 
$M(\al)=|\al|$ and $|\al_2|^2=\al_2\al_3=|\al|^{-1}=M(\al)^{-1}$,
so that
\[\al^2+|\al_2|^2=M(\al)^2+M(\al)^{-1} \geq 2 \sqrt{M(\al)}>2,\]
which is stronger than~\eqref{kubas}.

Now, assume that $|\al|<1$. Then, $M(\al)=\al_2 \al_3=|\al_2|^2$
and $|\al|=M(\al)^{-1}$. As the function $f(x)=x^{-2}+x$ is increasing in the interval $[{2}^{1/3}, +\infty)=[1.259921\ldots,+\infty)$, we have $f(x) \geq
f(\vartheta)= \vartheta^{-2}+\vartheta$
for each $x \geq \vartheta=1.324717\ldots$. Consequently, 
\[\al^2+|\al_2|^2=M(\al)^{-2}+M(\al) =f(M(\al))\geq f(\vartheta)= \vartheta^{-2}+\vartheta,\]
which yields~\eqref{kubas}.
This proves the theorem, since when $|\al|<1$, the equality $M(\al) =\vartheta$ for cubic $\al$ holds if and only if $\al=\pm \vartheta^{-1}$. 
\end{proof}

Taking, for instance, $s=t$ we find from~\eqref{eq:Bound mK}
that 
$$
m(\K) \geq 2^{-5/3}+2^{-2/3}=3\cdot 2^{-5/3}=0.944940\ldots.
$$
This lower bound is close to being sharp. Indeed,
selecting the field $\K=\Q(\vartheta,\beta)$, where $\beta$
is a totally real algebraic number of degree $s$ over $\Q(\vartheta)$ (and also over $\Q$), by~\eqref{cfrt1} we see that
$\K$ has signature $(s,s)$. 
This gives $\|\vartheta^{-1}\|_{\K}^2=s(\vartheta+\vartheta^{-2})$ by~\eqref{eq:alpha RC}, namely, 
we have:

\begin{theorem}\label{cubic1}
 For each $s \geq 2$, there exist infinitely many number fields $\K$ of signature $(s,s)$ for which 
 \[
0.944940\ldots =3\cdot 2^{-5/3} \le 
 m(\K) \leq 
\frac{\vartheta+\vartheta^{-2}}{2}=0.947279\ldots.
\] 
\end{theorem}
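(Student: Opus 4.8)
The plan is to exhibit the fields explicitly and to apply the tools already assembled, particularly Lemma~\ref{subfield} and the cubic bound of Theorem~\ref{cubic}. First I would establish the lower bound: for any number field $\K$ of signature $(s,s)$ one has $n = s + 2s = 3s$, so the parameter $\gamma = s/n = 1/3$ in~\eqref{eq:Bound mK}, and the left-hand side of~\eqref{eq:Bound mK} becomes
\[
\frac{s\,2^{-2s/(3s)} + s\,2^{s/(3s)}}{2s} = \frac{2^{-2/3} + 2^{1/3}}{2} = 2^{-5/3} + 2^{-2/3} = 3 \cdot 2^{-5/3} = 0.944940\ldots,
\]
which gives $m(\K) \ge 3 \cdot 2^{-5/3}$ for every such field.

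Next I would construct the fields achieving the claimed upper bound. Let $\vartheta$ be the real root of $x^3 - x - 1$, so $\Q(\vartheta)$ has signature $(1,1)$ (one real embedding coming from $\vartheta$ and one complex pair from the two non-real roots $\alpha_2, \alpha_3 = \overline{\alpha_2}$); here $s_1 = 1$, $t_1 = 1$. For each $s \ge 2$, pick a totally real algebraic number $\beta$ of degree $s$ over $\Q(\vartheta)$ which also has degree $s$ over $\Q$ — such $\beta$ exist in infinite supply, e.g.\ by choosing $\beta$ generating a totally real field of degree $s$ linearly disjoint from $\Q(\vartheta)$, and varying the field gives infinitely many distinct $\K = \Q(\vartheta,\beta)$. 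Then $s_2 = s$, $t_2 = 0$, $n_2 = s$, and by~\eqref{cfrt1} the signature of $\K$ is $(s_1 s_2,\ s_1 t_2 + s_2 t_1 + 2 t_1 t_2) = (s, s)$, as required.

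Now I would pin down $m(\K)$ from above by evaluating the relative size of a well-chosen element. Since $\vartheta^{-1}$ is an algebraic integer in $\K$ (indeed a unit, being the reciprocal of a unit, with minimal polynomial $x^3 + x^2 - 1$), and since $t_2 = 0$, formula~\eqref{cfrt3} of Lemma~\ref{subfield} gives $\|\vartheta^{-1}\|_{\K}^2 = n_2 \|\vartheta^{-1}\|^2 = s\,\|\vartheta^{-1}\|^2$. The absolute square size of $\vartheta^{-1}$ is computed directly from its conjugates: the real conjugate contributes $\vartheta^{-2}$ and the complex pair contributes $|\alpha_2^{-1}|^2 = |\alpha_2|^{-2}$; since $\vartheta$ is a unit, $\vartheta \cdot |\alpha_2|^2 = |\Nm(\vartheta)| = 1$, whence $|\alpha_2|^{-2} = \vartheta$, and so $\|\vartheta^{-1}\|^2 = \vartheta^{-2} + \vartheta = 1.894558\ldots$ (this is exactly the minimal value identified in Theorem~\ref{cubic}). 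Therefore
\[
m(\K) \le m_{\K}(\vartheta^{-1}) = \frac{\|\vartheta^{-1}\|_{\K}^2}{s + s} = \frac{s(\vartheta^{-2} + \vartheta)}{2s} = \frac{\vartheta + \vartheta^{-2}}{2} = 0.947279\ldots,
\]
which is the claimed upper bound. Combining the two bounds completes the proof.

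The only genuinely delicate point is ensuring the existence, for every $s \ge 2$, of infinitely many totally real $\beta$ of degree exactly $s$ over $\Q$ that are also of degree $s$ over $\Q(\vartheta)$, i.e.\ with $\Q(\beta)$ linearly disjoint from $\Q(\vartheta)$; but this is standard (for instance $\beta$ can be taken to generate $\Q(\cos(2\pi/p))$ for suitable primes $p$, or any totally real field of degree $s$ whose Galois closure is disjoint from $\Q(\vartheta)$), and one simply observes that since $\Q(\vartheta)$ is not totally real it cannot contain any totally real subfield of degree $> 1$, so disjointness is automatic once $\deg_{\Q}\beta = s$. Everything else is a direct substitution into~\eqref{eq:Bound mK}, \eqref{cfrt1} and \eqref{cfrt3}.
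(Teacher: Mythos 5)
Your proposal is correct and follows essentially the same route as the paper: the lower bound is the specialization of~\eqref{eq:Bound mK} to signature $(s,s)$, and the upper bound comes from taking $\K=\Q(\vartheta,\beta)$ with $\beta$ totally real of degree $s$ over both $\Q$ and $\Q(\vartheta)$, then evaluating $m_{\K}(\vartheta^{-1})$ via Lemma~\ref{subfield}. Your treatment of the linear-disjointness/existence point is in fact slightly more explicit than the paper's, which simply cites the classical fact that there are infinitely many totally real fields of each degree.
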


\begin{proof}
From the above discussion, we only need to show that there are infinitely many totally real number fields of given degree. In fact, this is a classical result; see, for example,~\cite[Theorem~1]{ABC} or~\cite[Theorem~1.2]{DS}. 
\end{proof}

\subsection{Fields with few real embeddings}
\label{sec:small s}

The next result implies that $m(\al)<1$ for some algebraic integers $\al$ of arbitrarily large degree with one or two real conjugates. 

\begin{theorem}\label{bhu1}
Let $\al$ be any root of $x^n+x^{n-1}+\ldots+x-1=0$, where $n\ge 2$. Then, the field $\K=\Q(\al)$ is of degree $n$ over $\Q$, the signature of $\K$ is $(s,t)$, $s+2t=n$, where $s=1$ for $n$ odd and $s=2$ for $n$ even, and 
$$
\|\al\|^2 
=s+t-3/4+\log 2+O(n^{-1/4}).
$$
\end{theorem}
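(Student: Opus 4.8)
The plan is to analyze the roots of $x^n+x^{n-1}+\ldots+x-1$ directly, exploiting the structure already developed in Lemma~\ref{bhu0} and Lemma~\ref{lopo}. First I would note that this polynomial is (up to sign) the reciprocal of $f(x)=x^n-x^{n-1}-\ldots-1$ from Lemma~\ref{bhu0}, so its roots are exactly $\vartheta_1^{-1},\ldots,\vartheta_n^{-1}$, where $\vartheta=\vartheta_1$ is the multinacci Pisot number and $\vartheta_2,\ldots,\vartheta_n$ are the remaining roots of $f$, all inside the unit disc. Hence the root $\be_n := \vartheta^{-1}$ is the unique root of our polynomial \emph{outside} the unit disc, it is real and positive, and by~\eqref{didz} it lies in $(1/2,(n+1)/(2n))$. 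When $n$ is even there is a second real root $\omega^{-1}$ with $\omega \in (-1,-3^{-1/n})$ by~\eqref{didz1}, hence $|\omega^{-1}| \in (1,3^{1/n})$; when $n$ is odd there is no other real root. This gives the claimed signature: $s=1$ for $n$ odd, $s=2$ for $n$ even, and in both cases $t=(n-s)/2$ complex conjugate pairs, all lying in the annulus $1<|z|<3^{1/n}$ by~\eqref{nhyt2}. The irreducibility and ``degree $n$'' claims are precisely the content of the reciprocal of Lemma~\ref{bhu0} together with (for the even case) Lemma~\ref{auxil2}-style reasoning, or can be cited directly from Lemma~\ref{bhu0}.

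Next I would split $\|\al\|^2=\cR(\al)+\cC(\al)$ into three pieces: the contribution $\be_n^2=\vartheta^{-2}$ of the dominant real root; the contribution of the possible second real root $\omega^{-2}$ (present only for $n$ even); and the sum $\sum_{j=1}^{t}|\beta_j|^2$ over the complex conjugate pairs. For the dominant root, from $g(\vartheta)=\vartheta^{n+1}-2\vartheta^n+1=0$ we get $\vartheta^n=1/(2-\vartheta)$, and since $\vartheta=2+O(2^{-n})$ by~\eqref{didz} we obtain $\vartheta^{-2}= (2-\vartheta)^{2/n} = \exp((2/n)\log(2-\vartheta))$. A short estimate gives $2-\vartheta = 2/(n+1) + O(n^{-2})$ (refining~\eqref{didz}: plug $\vartheta=2-\delta$ into $g$ and solve), so $\log(2-\vartheta) = \log 2 - \log n + O(1/n)$, hence $\vartheta^{-2} = 1 + (2/n)(\log 2 - \log n) + O(n^{-2}(\log n)^2) = 1 + O(n^{-1}\log n)$. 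Similarly, for $n$ even the second real root satisfies $1<|\omega^{-1}|<3^{1/n}$, so $\omega^{-2}=1+O(n^{-1})$. Thus the real part $\cR(\al)$ equals $s + O(n^{-1}\log n)$.

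The main work, and the main obstacle, is the complex part $\cC(\al)=\sum_{j=1}^{t}|\beta_j|^2$, but this is exactly Lemma~\ref{lopo} applied with $q=2$: it gives $\sum_{j=1}^{t}|\beta_j|^2 = t + \log 2 + O(n^{-1/4})$. (Here one must be mild about bookkeeping: Lemma~\ref{lopo} counts the roots with positive imaginary part, i.e.\ one representative from each conjugate pair, which is precisely $\cC(\al)$; the statement of Lemma~\ref{lopo} is phrased for $n\ge 3$, and the remaining small cases $n=2$ can be checked by hand.) Combining the three contributions,
\[
\|\al\|^2 = \cR(\al) + \cC(\al) = s + t + \log 2 + O(n^{-1/4}) - \bigl(s - \cR(\al)\bigr),
\]
but this is not yet the claimed formula — the discrepancy is the $-3/4$. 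So in fact the real-root contribution must be handled more carefully: the point is that $\vartheta^{-2}$ is \emph{not} close to $1$ to the accuracy $O(n^{-1/4})$; rather the honest statement needed is that the $\log 2$ term in Lemma~\ref{lopo}, which came from $\tfrac{q}{4}\int_0^1\log(5-4\cos\pi x)\,dx$ over the $t$ complex pairs, when combined with the dominant real root's contribution $\vartheta^{-2}$, must produce the extra $-3/4$. Concretely, I expect the clean bookkeeping to run: write $\cC(\al)+\be_n^2$ as a sum over \emph{all} $t+1$ "non-$\omega$" roots (for $n$ odd) or adjust for $n$ even, re-examine whether Lemma~\ref{lopo}'s sector argument already absorbs $\be_n$ or not, and extract the constant. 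Tracking this constant correctly — reconciling $\vartheta^{-2}\to 1$, the even/odd real-root split, and the precise constant term from the Erdős–Turán equidistribution integral so that the three pieces sum to $s+t-3/4+\log 2$ — is the delicate step; everything else is routine asymptotics built on Lemmas~\ref{bhu0} and~\ref{lopo}.
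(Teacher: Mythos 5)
Your overall strategy matches the paper's: get the signature from Lemma~\ref{bhu0} via reciprocation, split $\|\al\|^2=\cR(\al)+\cC(\al)$, and apply Lemma~\ref{lopo} with $q=2$ to get $\cC(\al)=t+\log 2+O(n^{-1/4})$. But there is a genuine gap in your treatment of the real part, and it is exactly the $-3/4$ you end up unable to account for. The dominant real root of $x^n+\ldots+x-1$ is $\vartheta^{-1}$, which (as you correctly state in your first paragraph) lies in $(1/2,(n+1)/(2n))$ — it is \emph{inside} the unit disc, not outside as you also claim. Squaring that interval immediately gives $\vartheta^{-2}=1/4+O(1/n)$, not $1+O(n^{-1}\log n)$. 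Your asymptotic expansion goes wrong at the step ``$2-\vartheta=2/(n+1)+O(n^{-2})$'': in fact $2-\vartheta=\vartheta^{-n}=\Theta(2^{-n})$ (consistent with your own earlier line $\vartheta=2+O(2^{-n})$), so $\log(2-\vartheta)=-n\log 2+O(1)$ and $\vartheta^{-2}=(2-\vartheta)^{2/n}=e^{-2\log 2}+O(1/n)=1/4+O(1/n)$. Substituting the spurious $2/(n+1)$ is what drives your value of $\vartheta^{-2}$ to $1$ and loses the constant.

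With the correct value, the bookkeeping closes with no further work: for $n$ odd, $\cR(\al)=\vartheta^{-2}=1/4+O(1/n)=s-3/4+O(1/n)$; for $n$ even, the second real root contributes $\omega^{-2}=1+O(1/n)$ (from $-1<\omega<-3^{-1/n}$), so $\cR(\al)=5/4+O(1/n)=s-3/4+O(1/n)$ again. Adding $\cC(\al)=t+\log 2+O(n^{-1/4})$ gives the theorem. Your closing speculation — that the $-3/4$ must be recovered by re-examining whether Lemma~\ref{lopo}'s sector argument absorbs the real root, or by revisiting the Erd\H{o}s--Tur\'an integral constant — is a red herring: the complex contribution is exactly $t+\log 2+O(n^{-1/4})$ as stated, and the entire constant $-3/4$ lives in $\cR(\al)$, coming from $1/4=1-3/4$. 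As written, your proposal does not establish the asymptotic formula, since you explicitly leave the key constant unresolved.
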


In particular, since $3/4-\log 2>0$, for such fields $\K$
with $s \in \{1,2\}$ Theorem~\ref{bhu1} implies that  the inequality $m(\K)<1$ holds for all sufficiently large $n$.

\begin{proof}
 Put $t=(n-1)/2$ for $n$ odd and $t=(n-2)/2$ for $n$ even. By Lemma~\ref{bhu0}, the signature of the field $\K=\Q(\al)$, where without restriction of generality we can take the positive root $\al=\vartheta_n^{-1}$, 
is $(n-2t,t)=(s,t)$. 
It remains to evaluate $\|\al\|^2$. 

From~\eqref{didz} one can easily derive that
\[
\cR(\al)=|\al|^2=1/4+O(1/n)
\]
for $n$ odd (when $s=1$). Using in addition~\eqref{didz1} we further find that
\[
\cR(\al)=5/4+O(1/n)
\]
for $n$ even (when $s=2$). 
Thus, in both cases, $s=1$ and $s=2$, we can write
\begin{equation}\label{Qrrrr}
\cR(\al)=s-3/4+O(1/n).
\end{equation}

On the other hand, employing Lemma~\ref{lopo} with $q=2$
we find that $$\cC(\al)=t+\log 2+O(n^{-1/4}).$$
Combining this with~\eqref{Qrrrr} 
we now find that
\[\|\al\|_{\K}^{2} =\|\al\|^2=\cR(\al)+\cC(\al)=s+t-3/4+\log 2+O(n^{-1/4}),\]
where $\K=\Q(\al)$. This gives the desired result. 
\end{proof}

We continue to give examples in the following. 

\begin{theorem}\label{kiy}
Suppose $n=4k+2$, $k \ge 1$, and  $\al$ is a root of 
the polynomial $x^n+x^{n-2}+\ldots+x^2-1$. Then, 
$\al$ has degree $n$ over $\Q$ and has exactly two real conjugates
and its absolute size is
$$\|\al\|^2=n/2+\log 2+O(n^{-1/4}).$$
\end{theorem}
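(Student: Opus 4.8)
The plan is to reduce to a polynomial of half the degree. Write $m = n/2 = 2k+1$, which is odd with $m \ge 3$, and put
$$f(y) = y^{m} + y^{m-1} + \ldots + y - 1,$$
so that the polynomial in the statement is $f(x^2)$. By Lemma~\ref{auxil2}, $f(x^2)$ is irreducible over $\Q$ and has precisely two real roots; hence $\al$ has degree $n$ over $\Q$ and exactly two real conjugates, which settles the first two assertions. It remains to evaluate $\|\al\|^2$.

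First I would determine the root structure of $f(x^2)$ from that of $f$. As recalled in the proof of Lemma~\ref{auxil2}, $-f$ is the reciprocal of the multinacci polynomial of degree $m$, so by Lemma~\ref{bhu0} the roots of $f$ consist of one positive real number $\beta_0$, with $1/2 < \beta_0 < 1$ and in fact $\beta_0 = 1/2 + O(1/n)$, together with $k = (m-1)/2$ conjugate pairs $\beta_j, \overline{\beta_j}$ ($1 \le j \le k$) of non-real numbers. Taking square roots, the $n$ roots of $f(x^2)$ are the two real numbers $\pm\sqrt{\beta_0}$ and, for each $j$, the four non-real numbers $\pm\sqrt{\beta_j}, \pm\sqrt{\overline{\beta_j}}$; since $\beta_j \ne \overline{\beta_j}$, these four are distinct and split into exactly the two complex-conjugate pairs $\{\sqrt{\beta_j}, \overline{\sqrt{\beta_j}}\}$ and $\{-\sqrt{\beta_j}, \overline{-\sqrt{\beta_j}}\}$. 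Hence $\Q(\al)$ has signature $(2, 2k)$.

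Now I would compute $\|\al\|^2 = \cR(\al) + \cC(\al)$. The two real conjugates $\pm\sqrt{\beta_0}$ each have square $\beta_0$, so $\cR(\al) = 2\beta_0 = 1 + O(1/n)$. Each index $j$ contributes to $\cC(\al)$ its two conjugate pairs, each of absolute square $|\sqrt{\beta_j}|^2 = |\beta_j|$, so $\cC(\al) = 2\sum_{j=1}^{k}|\beta_j|$. Applying Lemma~\ref{lopo} with $q = 1$ to $f$, which has degree $m = 2k+1 \ge 3$, yields $\sum_{j=1}^{k}|\beta_j| = k + \tfrac{1}{2}\log 2 + O(m^{-1/4})$, hence $\cC(\al) = 2k + \log 2 + O(n^{-1/4})$. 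Adding the two contributions, $\|\al\|^2 = (2k+1) + \log 2 + O(n^{-1/4}) = n/2 + \log 2 + O(n^{-1/4})$, as claimed.

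The main point requiring care is the root bookkeeping in the second paragraph — verifying that, for a non-real $\beta_j$, the four square roots are pairwise distinct, all non-real, and pair up into two complex-conjugate pairs (which is immediate, since $\sqrt{\beta_j} = \pm\sqrt{\overline{\beta_j}}$ would force $\beta_j = \overline{\beta_j}$), and checking consistency with the count of exactly two real roots furnished by Lemma~\ref{auxil2}. Everything else is a routine substitution of the quoted estimates.
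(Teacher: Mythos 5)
Your proof is correct and follows essentially the same route as the paper's: reduce to the degree-$n/2$ polynomial via $f(x^2)$, use Lemma~\ref{auxil2} for irreducibility and the count of real roots, get $\cR(\al)=1+O(1/n)$ from the bound on the multinacci number, and apply Lemma~\ref{lopo} with $q=1$ to evaluate $\cC(\al)$. The only difference is notational (you work with the roots $\beta_j$ of $f$ directly, the paper with the reciprocals $\vartheta_j^{-1}$), and your explicit bookkeeping of how the four square roots of a non-real $\beta_j$ split into two conjugate pairs is a welcome clarification of a step the paper passes over quickly.
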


\begin{proof} 
Using the notation in Lemma~\ref{bhu0}, let $\vartheta_j, j=1,\ldots,n/2$ be the roots of the polynomial $x^{n/2}-x^{n/2-1}-\ldots-x-1$ such that $\vartheta_{n/2}>1$ is the unique real root and $\vartheta_j^{-1}$, $j=1,\ldots,(n-2)/4$ have positive imaginary parts. 
By Lemma~\ref{auxil2}, the polynomial 
$x^{n}+x^{n-2}+\ldots+x^2-1$ is irreducible, so without restriction of generality we can assume that $\al=1/\sqrt{\vartheta_{n/2}}$ by using the reciprocity of polynomials. 
This $\al$ has a real conjugate $-\al$, and $n-2$ complex conjugates  $\pm 1/\sqrt{\vartheta_j}$, $j=1,\ldots,n/2-1$. Hence, by~\eqref{didz},
$$
\cR(\al)=2\al^2=2\vartheta_{n/2}^{-1}=1+O(1/n).$$
Similarly, 
$$\cC(\al)=2\sum_{j=1}^{(n-2)/4} |\vartheta_j|^{-1}.$$
Note that, $\vartheta_j^{-1}$, $j=1,\ldots,(n-2)/4$, 
are the complex roots of the polynomial $x^{n/2}+x^{n/2-1}+\ldots+x-1$
with positive imaginary parts. 
Hence,
by Lemma~\ref{lopo} with $q=1$ and $t=(n-2)/4$,
we find that
$$
\cC(\al)=2\(\frac{n-2}{4}+\frac{\log 2}{2}+O(n^{-1/4})\) = n/2-1+\log 2+O(n^{-1/4}).
$$
Combining this with $\cR(\al)$ we get the desired result. 
\end{proof}

For the algebraic integer $\al$ defined in Theorem~\ref{kiy}, we have 
$$
m(\al) = \frac{\|\al\|^2}{(n+2)/2} 
= 1 - 2(1-\log 2)/(n+2) + O(n^{-5/4}). 
$$ 
Thus, for all sufficiently large $n$ we obtain $m(\al)<1$, which implies that $m(\K)<1$ for $\K=\Q(\al)$. 

\begin{theorem}\label{kiy1}
Suppose that $s \geq 2$ is an even integer. Then, for each integer $k \ge 1$ there is a field $\K$
of degree $n=(2k+1)s$ with signature $(s,(n-s)/2)$  
for which 
$$d(\Lambda_{\K})^2 \leq \frac{n}{2}+\frac{s\log 2}{2}+O(s^{5/4}n^{-1/4}).$$
\end{theorem}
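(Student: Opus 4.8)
The plan is to obtain $\K$ from the field studied in Theorem~\ref{kiy} by adjoining a totally real element of degree $s/2$, and then to read off the signature of $\K$ and the relative size of a convenient algebraic integer directly from Lemma~\ref{subfield}. For $s=2$ there is nothing to do beyond Theorem~\ref{kiy} (with $\K=\Q(\al)$), so the point is really the passage to general even $s$.

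\emph{Step 1.} Write $n=(2k+1)s$ and set $m=2(2k+1)=4k+2$. Let $\al$ be a root of $x^{m}+x^{m-2}+\ldots+x^{2}-1$. By Lemma~\ref{auxil2} this polynomial is irreducible and has exactly two real roots, so $\al$ is a nonzero algebraic integer of degree $m$ with signature $(s_1,t_1)=(2,(m-2)/2)=(2,2k)$; moreover, by Theorem~\ref{kiy}, $\|\al\|^{2}=m/2+\log 2+O(m^{-1/4})$.

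\emph{Step 2.} Exactly as in the proof of Theorem~\ref{cubic1}, choose a totally real algebraic number $\be$ of degree $s/2$ over $\Q$ that is linearly disjoint from $\Q(\al)$ over $\Q$, i.e. with $[\Q(\al,\be):\Q]=\tfrac{s}{2}\cdot m$; put $\K=\Q(\al,\be)$, so that $(s_2,t_2)=(s/2,0)$ and $n_2=[\K:\Q(\al)]=s/2$. By~\eqref{cfrt1}, the signature of $\K$ is $(s_1s_2,\;s_1t_2+s_2t_1+2t_1t_2)=(s,\;sk)$, its degree is $m\cdot\tfrac{s}{2}=(2k+1)s=n$, and $sk=(n-s)/2$, as required. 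Since $t_2=0$ and $\al$ is an algebraic integer, \eqref{cfrt3} gives $\|\al\|_{\K}^{2}=n_2\|\al\|^{2}=\tfrac{s}{2}\|\al\|^{2}$. Substituting $m=2n/s$ into Step~1 yields $\|\al\|^{2}=n/s+\log 2+O((s/n)^{1/4})$, whence
\[
\|\al\|_{\K}^{2}=\frac{s}{2}\left(\frac{n}{s}+\log 2+O\!\left((s/n)^{1/4}\right)\right)=\frac{n}{2}+\frac{s\log 2}{2}+O\!\left(s^{5/4}n^{-1/4}\right).
\]
Finally $\al\in\Z_{\K}\setminus\{0\}$, so $d(\Lambda_{\K})^{2}\le\|\al\|_{\K}^{2}$, which is the asserted bound.

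The routine parts are the $O$-bookkeeping in the last display and the observation that for $s=2$ one has $\be\in\Q$, $\K=\Q(\al)$, and the statement is literally Theorem~\ref{kiy}. The one genuinely delicate point is the existence claim in Step~2: one must exhibit a totally real field of prescribed degree $s/2$ that is linearly disjoint from $\Q(\al)$. I would handle this as the paper does for Theorem~\ref{cubic1}, invoking the classical abundance of totally real number fields of any given degree (e.g.~\cite{ABC,DS}); to guarantee $[\Q(\al,\be):\Q]=\tfrac{s}{2}\deg\al$ it suffices to take the auxiliary totally real field ramified only at a prime not dividing the discriminant of $\Q(\al)$ (for instance a degree-$s/2$ subfield of $\Q(\zeta_p)^{+}$ for a suitably large prime $p\equiv 1\pmod{s}$), so that its Galois closure meets that of $\Q(\al)$ only in $\Q$.
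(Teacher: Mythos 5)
Your proposal is correct and follows essentially the same route as the paper: adjoin to the degree-$(4k+2)$ field of Theorem~\ref{kiy} a totally real $\be$ of degree $s/2$ over $\Q(\al)$ and over $\Q$, read off the signature from~\eqref{cfrt1} and the size from~\eqref{cfrt3}, and do the same $O$-bookkeeping. Your extra care about exhibiting a linearly disjoint totally real field (via real cyclotomic subfields) only makes explicit what the paper handles by citation in the proof of Theorem~\ref{cubic1}.
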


\begin{proof}
Write $s=2s_1$ and take $\K=\Q(\al,\be)$, where $\al$
is the algebraic number of degree $n/s_1=4k+2$ with $2$ real conjugates as that in Theorem~\ref{kiy} and $\be$ is a totally real algebraic number of degree $s_1$ over $\Q(\al)$ and also over $\Q$. Then, by~\eqref{cfrt1} of Lemma~\ref{subfield}, the signature of the field $\K$ is $(s,(n-s)/2)$. Furthermore, using~\eqref{cfrt3} and Theorem~\ref{kiy} we deduce that
\begin{align*}\|\al\|_{\K}^2 &= s_1\|\al\|^2 =s_1
\frac{n}{2s_1}+s_1\log 2+O(s_1 (n/s_1)^{-1/4})\\&=\frac{n}{2}+\frac{s\log 2}{2}+O(s^{5/4}n^{-1/4}),
\end{align*}
which implies the required result.
\end{proof}

In particular,  we see that  for the field $\K$ of Theorem~\ref{kiy1} we have
$$
s+t  = n\frac{k+1}{2k+1}\mand \frac{n}{2}+\frac{s\log 2}{2} =
n  \frac{2k+1 + \log 2}{2(2k+1)}
$$
and 
\begin{equation}
m(\K) \le 1  -  \frac{1- \log 2}{2(k+1)}  + O(k^{-5/4}).
\end{equation}
With fixed $s$ we get fields $\K$ with $m(\K) < 1$ for all sufficiently large $k$. 

Based on Theorem~\ref{cubic} and in view of the strategy in Theorem~\ref{thm:al 1/n}, we can get more number fields $\K$ with few real embeddings such that $m(\K)<1$. 

\begin{theorem}\label{cubic2}
Given a positive integer $n$, let $\al = \vartheta^{-1/n}$, where $\vartheta=1.324717\ldots$ is the root of $x^3-x-1=0$, and $\K=\Q(\al)$. 
Then, 
$$
m(\al) < 1, 
$$ 
and, in particular, $m(\K) < 1$.
\end{theorem}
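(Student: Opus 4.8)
The idea is to apply the machinery of Theorem~\ref{thm:al 1/n}, or rather its underlying computation, but with the extra input provided by Theorem~\ref{cubic}. The number $\vartheta$ is a cubic algebraic unit with one real conjugate $\vartheta$ and two complex conjugates $\vartheta_2,\vartheta_3=\overline{\vartheta_2}$ of modulus $|\vartheta_2| = \vartheta^{-1/2}$ (since $\Nm(\vartheta)=1$). Taking $\al=\vartheta^{-1/n}$, and assuming (as I would check, or note) that $[\Q(\al):\Q]=3n$ with one real embedding and signature $(1,(3n-1)/2)$ when $n$ is odd — and more care is needed for $n$ even, where $x^3-x-1$ being irreducible mod small primes or a direct argument shows the root-taking behaves — the conjugates of $\al$ are the $n$-th roots of the conjugates of $\vartheta^{-1}$. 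So $\al$ has (for $n$ odd) one real conjugate of modulus $\vartheta^{-1/n}$ and $2n$ complex conjugates: for each $j\in\{2,3\}$ the $n$ values $\zeta^k \vartheta_j^{-1/n}$, $k=0,\dots,n-1$, all of modulus $|\vartheta_j|^{-1/n}=\vartheta^{1/(2n)}$.

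\textbf{Main computation.} With this description, $\cR(\al) = \vartheta^{-2/n}$ and $\cC(\al) = n\,\vartheta^{1/n} + n\,\vartheta^{1/n} = 2n\,\vartheta^{1/n}$ — wait, more precisely $\cC(\al)$ sums $|\cdot|^2$ over $t$ complex conjugates with positive imaginary part, so it equals $n\,(|\vartheta_2|^{-2/n})\cdot\frac{1}{?}$; I would set this up carefully as $\cC(\al) = \tfrac{1}{2}\big(n|\vartheta_2|^{-2/n} + n|\vartheta_3|^{-2/n}\big) = n\vartheta^{1/n}$, and $s+t = 1 + \tfrac{1}{2}(3n-1) = \tfrac{3n+1}{2}$. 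Then
$$
(s+t)\,m(\al) = \|\al\|^2 = \vartheta^{-2/n} + n\vartheta^{1/n}.
$$
The target inequality $m(\al)<1$ becomes $\vartheta^{-2/n} + n\vartheta^{1/n} < \tfrac{3n+1}{2}$, i.e.
$$
2\vartheta^{-2/n} + 2n\vartheta^{1/n} < 3n+1.
$$
Writing $x = \vartheta^{1/n}>1$, one has $2n\vartheta^{1/n} = 2nx$ and $2\vartheta^{-2/n}=2x^{-2}$, and I would want $2x^{-2} + 2nx - 3n - 1 < 0$. The key is that $2x + x^{-2} < 3$ is \emph{false} for $x>1$ in general — so instead the saving must come from comparing against the cubic bound: Theorem~\ref{cubic} says $\al^2 + |\al_2|^2 \ge \vartheta^{-2}+\vartheta$ with equality iff the cubic integer is $\pm\vartheta^{-1}$, and here I am using precisely $\vartheta^{-1}$, so the ``cost per triple of conjugates'' is exactly $\vartheta+\vartheta^{-2}=1.894558\ldots < 2$. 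The point is that $\al$'s conjugates come in $n$ ``copies'' of the three conjugates of $\vartheta^{-1}$, each copy (after the $n$-th root) contributing close to $\vartheta+\vartheta^{-2}$ to the square-size-per-three-conjugates, which is strictly below the threshold $2 = 3\cdot\tfrac{2}{3}$ needed to keep $m=1$. Quantitatively I expect $\vartheta^{-2/n}+n\vartheta^{1/n} = \tfrac{3n}{2} + \tfrac{\log\vartheta}{2} + 1 + O(1/n)$ — no: expanding $n\vartheta^{1/n} = n(1 + \tfrac{\log\vartheta}{n} + O(n^{-2})) = n + \log\vartheta + O(1/n)$ and $\vartheta^{-2/n} = 1 + O(1/n)$, so $\|\al\|^2 = n + \log\vartheta + 1 + O(1/n)$, while $s+t = \tfrac{3n+1}{2}$. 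That forces $m(\al)\to \tfrac{2}{3}<1$, which is plainly fine for large $n$; but the claim is for \emph{all} $n\ge 1$, so I need a clean uniform argument, not just asymptotics.

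\textbf{Uniform argument and the obstacle.} For a proof valid for every $n\ge 1$, I would use the inequality $\vartheta^{1/n} \le 1 + \tfrac{\log\vartheta}{n} + \tfrac{(\log\vartheta)^2}{n^2}\cdot c$ for an explicit $c$ (valid since $\log\vartheta = 0.2812\ldots$ is small, so the exponential series converges fast), and $\vartheta^{-2/n} < 1$. Then $\|\al\|^2 < n + \log\vartheta + C/n + 1$ for an explicit constant $C$, and I must verify $n + \log\vartheta + 1 + C/n < \tfrac{3n+1}{2}$, i.e. $\tfrac{n}{2} > \log\vartheta + \tfrac{1}{2} + \tfrac{C}{n}$, which holds for all $n\ge 1$ since the right side is bounded (at $n=1$, $\al=\vartheta^{-1}$ itself and one checks $\|\vartheta^{-1}\|^2 = \vartheta^{-2}+\vartheta = 1.894\ldots < 2 = s+t$ directly by Theorem~\ref{cubic}). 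For $n$ even I must separately handle the signature and the irreducibility/degree claim $[\Q(\vartheta^{-1/n}):\Q]=3n$; the main obstacle is not the inequality (which is comfortable) but confirming that $x^{3n}-x^n-1$ — or whatever the minimal polynomial of $\vartheta^{1/n}$ turns out to be — is irreducible and that the real/complex split of its roots is exactly as described for \emph{every} $n$, rather than just generically. I would invoke that $\vartheta$ is a Pisot unit and that taking $n$-th roots of a unit of odd prime-free... — more simply, I expect the authors reduce to the case stated in Theorem~\ref{thm:al 1/n} (which already assumed $[\Q(\al^{1/n}):\Q(\al)]=n$) or give a short Kummer-theoretic argument, and the inequality $m(\al)<1$ then follows from the displayed estimate together with Theorem~\ref{cubic} giving the strict sign. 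If degree drops occur for special $n$, the bound only improves (fewer conjugates, same per-conjugate size), so strictness is preserved in all cases.
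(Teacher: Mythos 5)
There is a genuine gap, and it is the conjugate count. The number $\al=\vartheta^{-1/n}$ has degree $3n$, and its conjugates are \emph{all} $n$-th roots of \emph{all three} conjugates of $\vartheta^{-1}$: that is $n$ conjugates of modulus $\vartheta^{-1/n}$ (of which, for $n$ odd, one is real and $n-1$ are non-real) \emph{plus} $2n$ conjugates of modulus $\vartheta^{1/(2n)}$. You list only $2n+1$ conjugates, omitting the $(n-1)/2$ complex-conjugate pairs $\zeta_n^k\vartheta^{-1/n}$, $k\neq 0$. Consequently your formula $\|\al\|^2=\vartheta^{-2/n}+n\vartheta^{1/n}$ is wrong; the correct value (for $n$ odd) is
$$
\|\al\|^2=\tfrac{n+1}{2}\,\vartheta^{-2/n}+n\,\vartheta^{1/n},
$$
which satisfies $\|\al\|^2=\tfrac{3n+1}{2}+O(1/n)=s+t+O(1/n)$. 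So $m(\al)\to 1$ from below, \emph{not} $m(\al)\to 2/3$, and the inequality $m(\al)<1$ is a delicate first-order statement rather than the ``comfortable'' one you describe. The paper proves it by showing that $f(y)=\tfrac{n+1}{2}y^{-2}+ny$ is decreasing on $(1,(1+1/n)^{1/3})$ with $f(1)=\tfrac{3n+1}{2}$, reducing the claim to the explicit inequality $3\log\vartheta<n\log(1+1/n)$ for all $n$ (with the base case $n=1$ being Theorem~\ref{cubic}); an analogous computation handles $n$ even. Your intuition that the ``cost per triple of conjugates'' is governed by $\vartheta+\vartheta^{-2}<2$ is the right heuristic, but only the real triple $\{\vartheta^{-1/n},\vartheta_2^{-1/n},\vartheta_3^{-1/n}\}$ — one copy out of $n$ — realizes that saving; the other $n-1$ ``copies'' are entirely complex and contribute essentially the break-even amount, which is why the margin is only $O(1/n)$.

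A secondary gap: you never actually establish $[\Q(\al):\Q]=3n$ and the signature, deferring to a hoped-for ``Kummer-theoretic argument.'' The paper settles this by observing that $\al$ is a root of $x^{3n}+x^{2n}-1$, whose reciprocal is $-(x^{3n}-x^n-1)$, and invoking Ljunggren's irreducibility theorem for such trinomials. Your fallback remark that a degree drop ``only improves the bound'' is also unjustified, since both $\|\al\|^2$ and the normalizer $s+t$ would change in that event.
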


\begin{proof} Note that $\al$ is a root of the polynomial
$x^{3n}+x^{2n}-1$. If it were reducible then its reciprocal polynomial 
$-(x^{3n}-x^n-1)$ must be reducible too. This is, however, not the case by an old result of 
Ljunggren~\cite[Theorem~3]{lju}. 
Hence, $[\Q(\al):\Q]=3n$. 

First, suppose that $n$ is odd. Then,
$\K$ has signature $(s,t)$ with $s=1$ and 
$$
t= \frac{1}{2}(3n-1). 
$$
Consequently,
\begin{equation}
\label{eq:s+t 2}
s + t = \frac{1}{2}(3n+1). 
\end{equation}

Note that the algebraic integer $\al$ has one real conjugate of modulus $\al$, $(n-1)/2$ pairs of complex conjugates of the same modulus $\al$, and $n$ pairs of complex conjugates of the same modulus $\al^{-1/2}$. 
Thus, we obtain 
$$
\cR(\al)=\al^2= \vartheta^{-2/n}, 
$$
and 
$$
\cC(\al) = \frac{n-1}{2}\al^2 + n \al^{-1} 
= \frac{n-1}{2}\vartheta^{-2/n} + n\vartheta^{1/n}.
$$
Therefore,
$$\|\al\|^2=\cR(\al)+\cC(\al) = \frac{n+1}{2} \vartheta^{-2/n}+n\vartheta^{1/n}.$$
In view of~\eqref{eq:s+t 2}, it remains to check that 
\begin{equation}\label{patikr}
\frac{n+1}{2}y^{-2}+ny<\frac{3n+1}{2}
\end{equation}
for $y=y_0=\vartheta^{1/n}$. 

We now note that  for $n=1$ the inequality~\eqref{patikr} 
holds by  Theorem~\ref{cubic} (see~\eqref{kubas}). 
For $n \geq 3$, since the function $f(y)=(n+1)y^{-2}/2+ny$ is decreasing in the interval 
$(1,y_1)$, where $y_1=(1+1/n)^{1/3}$, and 
satisfies $f(1)=(3n+1)/2$, in order to prove~\eqref{patikr} we need to show that $1<y_0<y_1$. To verify this it suffices to check that $$3\log \vartheta
<n\log (1+1/n)$$ for $n \geq 3$.  One can easily see that the right hand side of this inequality is increasing in $n$, and its smallest value  
$3 \log (1+1/3)=0.863046\ldots$ is greater than $3\log \vartheta=0.843598\ldots$. 
This concludes the proof in the case when $n$ is odd. 

Next, assume that $n \geq 2$ is even. Then, $s=2$, $t=3n/2-1$,
$\cR(\al)=2\al^2= 2\vartheta^{-2/n}$,
and 
$$
\cC(\al) = (n/2-1)\al^2 + n \al^{-1} 
= (n/2-1)\vartheta^{-2/n} + n\vartheta^{1/n}.
$$
Therefore, instead of~\eqref{patikr} we now need to verify the inequality
$$\|\al\|^2=\cR(\al)+\cC(\al) = \frac{n+2}{2} y^{-2}+ny<s+t=\frac{3n+2}{2}$$
for $y=y_0=\vartheta^{1/n}$.

For $n \geq 2$, since the function $g(y)=(n/2+1)y^{-2}+ny$ is decreasing in the interval 
$(1,y_2)$, where $y_2=(1+2/n)^{1/3}$, and 
satisfies $g(1)=(3n+2)/2$, as above it suffices to show that $1<y_0<y_2$. Now, to verify this we need to check that $$3\log \vartheta
<n\log (1+2/n)$$ for $n \geq 2$.  Again,  the right hand side of this inequality is increasing in $n$, and its smallest value 
$2 \log (1+2/2)= 1.386294\ldots$ is greater than 
$3\log \vartheta=0.843598\ldots$. 
This completes the proof. 
\end{proof}

Using the examples in Section~\ref{sec:num} below and following the method in Theorem~\ref{cubic2}, one can construct more number fields $\K$ with $m(\K)<1$.

\section{Numerical examples}
\label{sec:num}

\subsection{Preliminaries}
In this section, we use the computer algebra system PARI/GP~\cite{Pari}
to make some computations for number fields of low degree. 

Let $\al$ be an algebraic integer of degree $n$, which has $s$ real conjugates and $2t$ complex conjugates. If $m(\al) < 1$ then the absolute value of each conjugate of $\al$ is less than $(s+t)^{1/2}$, and so the minimal polynomial $f(x)$ of $\al$ has the form of 
\begin{equation}
\label{eq:pol}
x^n + a_1x^{n-1} + \ldots + a_{n-1} x + a_n \in \Z[x],
\end{equation}
where 
\begin{equation}
\label{eq:coeff bound}
|a_i| < \binom{n}{i}(s+t)^{i/2} \le \binom{n}{i}n^{i/2}, \qquad  i=1,2, \ldots, n.
\end{equation}
This means that for fixed integer $n\ge 1$, we can find all algebraic integers $\al$ of degree $n$ satisfying $m(\al)<1$ by testing finitely many monic integer polynomials. Besides, this also enables us to find the shortest non-zero vectors in such lattices $\Lambda_\K$ with $m(\K)<1$. However, the computational complexity increases very fast when $n$ becomes large.

We especially want to point out that~\eqref{eq:coeff bound} implies that for any fixed integer $n\ge 1$ there are only finitely many algebraic integers $\al$ of degree $n$ such that $m(\al)<1$. However, there can be infinitely many number fields $\K$ of degree $n$ for which $m(\K)<1$; see Theorem~\ref{cubic1}.

\subsection{Numerical tables}

Given an irreducible polynomial $f$ in $\Z[x]$, define $m(f)=m(\al)$, where $\al$ is an arbitrary root of $f$. We say that $f$ has signature $(s,t)$ if $f$ has exactly $s$ real roots and $2t$ complex roots, and so $\deg f = s+2t$.   
Based on the above preparations and using PARI/GP, one can list all monic irreducible polynomials $f$ of low degree for which $m(f)<1$. Here, we do this for polynomials of degree at most $6$. 

We have indicated before that for any monic irreducible polynomial $f \in \Z[x]$ of signature $(s,t)$, where $st=0$, we must have $m(f) \ge 1$. So, it suffices to consider the cases when $st\ne 0$.  

In Table~\ref{deg34}, we list all monic irreducible polynomials $f$ of degree 3 and 4 such that $m(f)<1$. The first column stands for the signature of a polynomial. The polynomials $f$ themselves and their $m(f)$ are presented in the second and third columns, respectively, and for comparison we illustrate the lower bound coming from~\eqref{eq:Bound mK} in the last column.

\begin{table}[H]
\centering
\begin{tabular}{|c|c|c|c|}
\hline
$(s,t)$ & Polynomials $f$ & $m(f)$ & Lower bound   \\ \hline
\multirow{4}{*}{$(1,1)$}  & $x^3-x^2+1$ & \multirow{2}{*}{0.947279\ldots}  & \multirow{4}{*}{0.944940\ldots}   \\ 
& $x^3+x^2-1$ &   &     \\ \cline{2-3}
& $x^3+x-1$ & \multirow{2}{*}{0.965571\ldots}  &   \\ 
& $x^3+x+1$ &   &    \\  \hline
\multirow{3}{*}{$(2,1)$} & $x^4+x^2-1$ & 0.951367\ldots &   \\ \cline{2-3}
& $x^4-x^3+x^2+x-1$ & \multirow{2}{*}{0.979971\ldots} &  0.942809\ldots \\ 
& $x^4+x^3+x^2-x-1$ &  &  \\ \hline
\end{tabular}
\caption{Monic irreducible polynomials of degree $3, 4$ and $m(f)<1$}
\label{deg34}
\end{table}

Our computations show that there are $59$ monic irreducible polynomials $f$ of degree $5$ or $6$ with $m(f)<1$. In Table~\ref{deg56},  in the second column we give the number of such polynomials for each signature, and only list the polynomials with minimum value of $m(f)$ among such $f$.  

\begin{table}[H]
\centering
\begin{tabular}{|c|c|c|c|c|}
\hline
$(s,t)$ & Number & Polynomials $f$ & $m(f)$ & Lower bound  \\ \hline
\multirow{2}{*}{$(1,2)$} & \multirow{2}{*}{22} & $x^5-x^3-x^2+x+1$ & \multirow{2}{*}{0.961783\ldots} &  \multirow{2}{*}{0.957248\ldots}  \\ 
& & $x^5-x^3+x^2+x-1$ &  &   \\ \hline
$(3,1)$ & 0 & & &   \\ \hline 
$(2,2)$ & 37 & $x^6+x^2-1$ & 0.946467\ldots & 0.944940\ldots  \\ \hline
$(4,1)$ & 0 & & &   \\ \hline
\end{tabular}
\caption{Monic irreducible polynomials of degree $5, 6$ with $m(f)<1$}
\label{deg56}
\end{table}

\subsection{Examples of fields with $m(\K)<1$ and $m(\K)=1$}

We see from  Lemma~\ref{lem:nece} that 
for small degree $n$ the condition $m(\al)<1$ implies that $\al$ is an algebraic unit, 
that is, $a_n=\pm 1$ in~\eqref{eq:pol}.
Hence, it might be of interest to find an algebraic integer $\al$, which is not an algebraic unit, but satisfies $m(\al)<1$. 
At the present time we do not have such examples; see also Question~\ref{quest:ma<1}
below. 

In the following we present some direct consequences of Tables~\ref{deg34} and~\ref{deg56}.

\begin{theorem}
\label{thm:cubic}
Let $\K$ be a number field  of degree 3 with signature $(s,t) =(1,1)$. 
Then, $m(\K) = 1$ if and only if $\K$ does not contain any roots of the following polynomials:
\begin{align*}
& x^3 - x^2 + 1, \quad x^3+x^2-1, \\
& x^3 + x - 1, \quad x^3 + x + 1. 
\end{align*}
\end{theorem}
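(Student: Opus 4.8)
The plan is to reduce the statement to a direct search over cubic integers of signature $(1,1)$ with $m(\alpha)<1$, which by Table~\ref{deg34} is exactly the list of four polynomials quoted. First I would dispose of the ``if'' direction: if $\K$ contains a root of one of the four listed polynomials, then since each of these polynomials is irreducible of degree $3$ and $[\K:\Q]=3$, such a root is a primitive element, so $\K=\Q(\alpha)$ and $m(\K)\le m(\alpha)<1$ by~\eqref{apibr} and Table~\ref{deg34} (the values $0.947279\ldots$ and $0.965571\ldots$ recorded there). Hence $m(\K)\ne 1$, and since $m(\K)\le 1$ always by~\eqref{eq:Bound mK}, in fact $m(\K)<1$.

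For the ``only if'' direction, suppose $m(\K)<1$. By~\eqref{apibr} there is a nonzero algebraic integer $\alpha\in\Z_\K$ with $m_\K(\alpha)=m(\K)<1$. Since $n=3\le 23$, Lemma~\ref{lem:nece} (or rather the observation following it) does not immediately apply because $\alpha$ need not generate $\K$; but here $\K$ itself has degree $3$, so any $\alpha\notin\Q$ in $\K$ satisfies $\Q(\alpha)=\K$, and $m_\K(\alpha)=m(\alpha)$. If instead $\alpha\in\Q$, then $\alpha=\pm1$ (the only rational algebraic integers with $|\alpha|\le 1$ giving a chance at $m<1$) and $m_\K(\pm1)=1$, a contradiction. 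So $\Q(\alpha)=\K$ and $m(\alpha)=m(\K)<1$, with $\alpha$ a cubic algebraic integer of signature $(1,1)$. By Lemma~\ref{lem:nece}, $\alpha$ is an algebraic unit. Now the minimal polynomial $f$ of $\alpha$ has the shape~\eqref{eq:pol}–\eqref{eq:coeff bound} with $n=3$, $s+t=2$, so $|a_1|<3\sqrt{2}$, $|a_2|<3\cdot 2=6$, and $|a_3|=1$; this leaves only finitely many candidate polynomials, and a finite check (the content of the computation reported in Table~\ref{deg34}) shows that the only monic irreducible cubics of signature $(1,1)$ with $m(f)<1$ are the four listed. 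Hence $\K=\Q(\alpha)$ contains a root of one of them, which is the contrapositive of what we wanted.

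The only genuine content beyond bookkeeping is the finite verification underlying Table~\ref{deg34}, i.e.\ that among the $O(1)$ monic irreducible cubics with $|a_1|\le 4$, $|a_2|\le 5$, $a_3=\pm1$ and signature $(1,1)$, exactly the four listed satisfy $\sum_{i}|\alpha_i|^2<2\cdot 2=4$ (equivalently $m(f)<1$); this is a routine but unavoidable computation, and I expect the main subtlety to be making sure the coefficient bounds~\eqref{eq:coeff bound} are applied correctly so that no candidate is missed, together with checking that each listed polynomial really is irreducible with the stated signature (e.g.\ $x^3+x^2-1$ has one real root near $0.7549$ and a complex pair, and $x^3+x-1$ has one real root near $0.6823$ and a complex pair). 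Once the table is granted, the proof is essentially immediate from the observation that in a cubic field every non-rational integer is a primitive element.
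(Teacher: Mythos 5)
Your proposal is correct and follows essentially the same route as the paper, which presents Theorem~\ref{thm:cubic} as a direct consequence of Table~\ref{deg34}: the whole content is the finite search over monic cubic units of signature $(1,1)$ plus the observation that, since $3$ is prime, every non-rational integer of $\K$ is a primitive element (so, unlike the quartic case, no subfield elements need separate treatment). The only blemish is that you have swapped the labels ``if'' and ``only if'' for the two directions, but both implications are in fact established.
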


\begin{theorem}
\label{thm:quartic}
Let $\K$ be a number field of degree 4 with signature $(s,t)=(2,1)$. 
Then, $m(\K) = 1$ if and only if $\K$ does not contain any roots of the following polynomials:
\begin{align*}
& x^4 + x^2 - 1, \quad 
 x^4 - x^3 + x^2 + x - 1, \\
& x^4 + x^3 + x^2 - x - 1. 
\end{align*}
\end{theorem}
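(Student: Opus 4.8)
The plan is to deduce Theorem~\ref{thm:quartic} directly from the numerical classification recorded in Tables~\ref{deg34} and~\ref{deg56}, together with Lemma~\ref{lem:nece}. First I would recall that by definition $m(\K)=\min_{\al\in\Z_\K\setminus\{0\}}m_\K(\al)$, so $m(\K)<1$ holds precisely when there is some nonzero algebraic integer $\al\in\Z_\K$ with $m_\K(\al)<1$. For a field $\K$ of degree $4$ with signature $(2,1)$, any such witnessing $\al$ generates a subfield $\Q(\al)\subseteq\K$; since $[\K:\Q]=4$, either $\Q(\al)=\K$ or $[\Q(\al):\Q]\in\{1,2\}$.

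The key reduction is to rule out the proper-subfield cases. If $\al\in\Q$ is a nonzero rational integer, then $\|\al\|_\K^2=(s+t)\al^2\ge s+t$, so $m_\K(\al)\ge1$; such $\al$ never witnesses $m(\K)<1$. If $[\Q(\al):\Q]=2$, then $\Q(\al)$ is either real quadratic (signature $(2,0)$) or imaginary quadratic (signature $(0,1)$); by~\eqref{eq:Bound mK} applied to $\Q(\al)$ we have $m(\al)\ge1$, hence $m_\K(\al)\ge\min\{1,m(\al)\}=1$ by Theorem~\ref{thm: mK mQ} — here I would need to check the linear-disjointness hypothesis, which holds automatically since $\K/\Q(\al)$ has degree $2=[\K:\Q]/[\Q(\al):\Q]$ and one can always choose $\be\in\K$ with $\K=\Q(\al,\be)$, $[\Q(\be):\Q]=[\K:\Q(\al)]$ (the argument is the one used implicitly in Theorem~\ref{thm: mK mQ}). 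Alternatively, and more cleanly, one can invoke Theorem~\ref{thm: mK mQ} directly: if $m(\al)\ge1$ for every subfield-generating $\al$ of degree $<4$, then the minimum over $\Z_\K$ is attained (if anywhere below $1$) at an $\al$ with $\Q(\al)=\K$.

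Thus $m(\K)<1$ if and only if $\K$ contains a nonzero algebraic integer $\al$ with $\Q(\al)=\K$ and $m(\al)<1$, i.e.\ if and only if $\K$ contains a root of some monic irreducible quartic $f\in\Z[x]$ of signature $(2,1)$ with $m(f)<1$. By Table~\ref{deg34}, the complete list of such polynomials is $x^4+x^2-1$, $x^4-x^3+x^2+x-1$, and $x^4+x^3+x^2-x-1$ — this is exactly the finiteness statement guaranteed by~\eqref{eq:coeff bound}, and the exhaustive search is the content of the tables. Negating, $m(\K)=1$ if and only if $\K$ contains no root of any of these three polynomials, which is the assertion of the theorem.

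The main obstacle is not any deep argument but the bookkeeping of the subfield reduction: one must be careful that Theorem~\ref{thm: mK mQ} (or the direct estimate $m_\K(\al)\ge1$ for $\al$ of small degree) genuinely covers every $\al\in\Z_\K$ that is not a generator of $\K$, so that the search can legitimately be restricted to generators, and hence to the finite list produced by PARI/GP. Once that is in place the proof is a one-line consequence of the table. I would also remark that the same reasoning gives Theorem~\ref{thm:cubic} for cubic fields, using that a proper subfield of a cubic field is $\Q$ itself.
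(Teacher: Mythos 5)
Your overall strategy --- reduce to elements $\al$ with $\Q(\al)=\K$ and then read off the answer from Table~\ref{deg34} --- is the same as the paper's, and your treatment of rational integers and of quartic generators is fine. But the way you dispose of quadratic $\al\in\Z_\K$ has a genuine gap. You invoke Theorem~\ref{thm: mK mQ} and assert that its hypothesis ``holds automatically'': that one can always choose $\be\in\K$ with $\K=\Q(\al,\be)$ and $\Q(\al)$, $\Q(\be)$ linearly disjoint over $\Q$. This is false, and the paper itself exhibits the counterexample in exactly the relevant setting: $\K=\Q(2^{1/4})$ is a quartic field of signature $(2,1)$ containing $\al=\sqrt{2}$, and its only proper subfields are $\Q$ and $\Q(\sqrt{2})$, so any $\be$ with $\Q(\al,\be)=\K$ must satisfy $\Q(\be)=\K$, which is not linearly disjoint from $\Q(\sqrt 2)$. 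Hence Theorem~\ref{thm: mK mQ} simply does not apply to such pairs $(\K,\al)$, and your ``alternative, cleaner'' formulation of the reduction inherits the same defect. The gap is not cosmetic: for a real quadratic unit with norm $-1$ the inequality you need, $2\al^2+\be^2\ge 3$ (or $\al^2+2\be^2\ge3$), does not follow from AM--GM alone, which only gives $2\sqrt{2}\,|\al\be|=2\sqrt2<3$; so something beyond ``$m(\al)\ge1$ for quadratic $\al$'' is genuinely required.

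What the paper does instead is a direct computation that avoids any disjointness hypothesis. Using the general counting identity~\eqref{eq:mKa} (valid for arbitrary $\al\in\K$), a real quadratic $\al$ with conjugate $\be$ and minimal polynomial $x^2+bx+c$ satisfies $\|\al\|_{\K}^2=2\al^2+\be^2$ or $\al^2+2\be^2$, whence $\|\al\|_{\K}^2\ge 2\sqrt2\,|c|$. This settles $|c|\ge2$ immediately, and the case $|c|=1$ is finished by an elementary case analysis on $b$ and $c$ (e.g.\ for $c=-1$, $|b|\ge2$ one conjugate exceeds $|b|$ in modulus; for $c=1$ one has $|b|\ge3$; the remaining $b=\pm1$, $c=-1$ cases are checked directly). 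To repair your proof you should replace the appeal to Theorem~\ref{thm: mK mQ} by this direct estimate (or by the brute-force check the paper mentions as an alternative). Note also that the analogous reduction for Theorem~\ref{thm:cubic} is indeed trivial, as you say, since the only proper subfield of a cubic field is $\Q$; the quartic case is precisely where the subfield bookkeeping cannot be waved away.
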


\begin{proof} 
From Table~\ref{deg34}, it suffices to show that for any algebraic integer $\al \in \K$ of degree 2, we have $m_{\K}(\al)\ge 1$. Note that  $\K$ has only two complex embeddings, so $\al$ must be real. Let $\be$ be the conjugate of $\al$, and assume that the minimal polynomial of $\al$ is $x^2+bx+c\in \Z[x]$. So, we have 
\begin{equation} \label{eq:Vieta}
\al + \be  = -b, \qquad \al\be = c 
\mand b^2-4c >0. 
\end{equation}
By definition (or in view of~\eqref{eq:mKa}), we get 
$$
\| \al \|_{\K}^2 = 2\al^2 + \be^2 
\mor  \| \al \|_{\K}^2 = \al^2 + 2\be^2. 
$$
Thus, $\| \al \|_{\K}^2 \ge 2\sqrt{2}|\al\be|=2\sqrt{2}|c|$. Then, if $|c|\ge 2$, we have $\| \al \|_{\K}^2 >3$, which implies that $m_{\K}(\al)> 1$. 

Now assume that $|c|=1$. Clearly, $b\ne 0$.  
If $c=-1$, then $\al\be<0$, and we must have either $|\al|>|b|$ or $|\be|>|b|$, so we still obtain  $m_{\K}(\al)>1$ for $|b|\ge 2$; while for $b=\pm 1$, the inequality $m_{\K}(\al)>1$ can be verified by direct computations. Finally, suppose that $c=1$. Then, by~\eqref{eq:Vieta}, we have $|b|\ge 3$, and then either $|\al|>2$ or $|\be|>2$, and thus we also get  $m_{\K}(\al)>1$. This completes the proof. 
\end{proof}

We remark that the above proof of Theorem~\ref{thm:quartic} can be replaced by computations using PARI/GP. 
Indeed, if $\|\al\|_{\K}^2<2+1=3$, then $\al$ and $\be$ are of bounded absolute value, and so are the coefficients $b$ and $c$, thus we can use PARI/GP to check all these quadratic polynomials. 

\begin{theorem}
\label{thm:quintic}
Let $\K$ be a number field of degree 5. 
\begin{enumerate}
\item[(i)] Among number fields 
 $\K$  of signature $(s,t)=(1,2)$ the smallest value $m(\K) = 0.961783\ldots$ is achieved when $\K$ is generated by a root of  
 the following polynomials:
$$
x^5-x^3-x^2+x+1, \qquad x^5-x^3+x^2+x-1. 
$$

\item[(ii)] If $\K$ is of signature $(s,t)=(3,1)$ then
$m(\K) = 1$. 
\end{enumerate}
\end{theorem}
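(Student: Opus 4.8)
The plan is to deduce both parts from the exhaustive search already described at the start of Section~\ref{sec:num}, using that the degree $5$ is prime in order to eliminate proper subfields. Concretely, if $\K$ has degree $5$ its only subfields are $\Q$ and $\K$, so for a nonzero $\al\in\Z_{\K}$ either $\al\in\Z$, in which case every embedding of $\K$ fixes $\al$ and hence $m_{\K}(\al)=\al^2\ge 1$, or $\Q(\al)=\K$, in which case $m_{\K}(\al)=m(\al)$ with $\al$ a degree-$5$ algebraic integer whose signature equals that of $\K$. Together with $\eqref{apibr}$ and the trivial bound $m(\K)\le m_{\K}(1)=1$ this yields
\[
m(\K)=\min\left\{1,\ \min\{m(\al)\,:\,\al\in\Z_{\K},\ \deg\al=5\}\right\}.
\]
By Lemma~\ref{lem:nece} any degree-$5$ algebraic integer with $m(\al)<1$ is a unit, so its minimal polynomial has constant term $\pm1$ and, by $\eqref{eq:coeff bound}$, lies in the finite list of monic integer polynomials of degree $5$ that PARI/GP was run over to construct Table~\ref{deg56}.

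For part~(i) I would invoke the output of that search: the degree-$5$ algebraic integers of signature $(1,2)$ with $m(\al)<1$ are exactly the roots of the $22$ polynomials counted in Table~\ref{deg56}, the smallest value of $m$ among them being $0.961783\ldots$, attained only at the roots of $x^5-x^3-x^2+x+1$ and $x^5-x^3+x^2+x-1$. Since the substitution $x\mapsto -x$ carries one of these polynomials into a constant multiple of the other, both define the same number field. Substituting this information into the displayed formula shows that $m(\K)\ge 0.961783\ldots$ for every $\K$ of signature $(1,2)$, with equality precisely when $\Z_{\K}$ contains a root of one — equivalently, both — of the two polynomials, which is the assertion.

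For part~(ii) the same table records that no degree-$5$ algebraic integer of signature $(3,1)$ satisfies $m(\al)<1$ (the corresponding count is $0$). Hence for any $\K$ of signature $(3,1)$ the inner minimum in the displayed formula is at least $1$, and therefore $m(\K)=1$.

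I do not anticipate a genuine conceptual difficulty: the substance is (a) carrying out the finite search of $\eqref{eq:coeff bound}$ correctly and computing the signatures and the values $m(f)$ for each candidate, and (b) the bookkeeping remark above, namely that primality of $5$ lets one pass freely between ``minimum of $m(\al)$ over algebraic integers of fixed degree and signature'' and ``$m(\K)$''. The only points that require a little care are confirming that the polynomials listed in Table~\ref{deg56} are irreducible and that the two minimizers in part~(i) generate a single field, so that the phrase ``is achieved when $\K$ is generated by a root of the following polynomials'' is unambiguous.
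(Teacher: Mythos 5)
Your argument is correct and is exactly the route the paper takes: the paper offers no written proof for this theorem, presenting it as a ``direct consequence'' of Table~\ref{deg56}, and the content you supply --- that $5$ is prime so every non-rational algebraic integer of $\K$ generates $\K$, that rational integers give $m_{\K}(\al)=\al^2\ge 1$, and that the two minimizing quintics are related by $x\mapsto -x$ and hence define the same fields --- is precisely the omitted bookkeeping. No discrepancies.
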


\begin{theorem}
\label{thm:sextic}
Let $\K$ be a number field of degree 6. 
\begin{enumerate}
\item[(i)]
Among number fields 
 $\K$  of signature  $(s,t)=(2,2)$ the smallest value $m(\K) = 0.946467\ldots$ is achieved when $\K$ is generated by a root of  
the following polynomial
$$
x^6 + x^2 -1. 
$$

\item[(ii)]
If $\K$ is of signature $(s,t)=(4,1)$ then
$m(\K) = 1$. 
\end{enumerate}
\end{theorem}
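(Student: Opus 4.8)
The plan is to bound $m_\K(\al)$ from below over all nonzero $\al\in\Z_\K$, since $m(\K)\le 1$ (take $\al=1$) and $m(\K)=\min_{\al\in\Z_\K\setminus\{0\}}m_\K(\al)$. I would organise the proof by $d_1:=[\Q(\al):\Q]$, which divides $6$, so $d_1\in\{1,2,3,6\}$. The facts I would lean on are that every real embedding of $\K$ restricts to a real embedding of $\Q(\al)$ (so $\Q(\al)$ has a real embedding, as $s\ge1$, and in particular no imaginary quadratic subfield), that a complex place of $\Q(\al)$ lifts only to complex places of $\K$, and that a real place of $\Q(\al)$ lifts to places of $\K$ of total local degree $d_2=[\K:\Q(\al)]$ whose types are constrained by the signature of $\K$. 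Together with $(s+t)m_\K(\al)=\sum_i(u_i+v_i)\al_i^2+d_2\,\cC(\al)$ (notation of~\eqref{eq:mKa}) this lets me, for each $d_1<6$, enumerate the admissible signatures of $\Q(\al)$ and read off the exact weights $u_i+v_i$ on the conjugates of $\al$.

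For $d_1=6$, $\al$ generates $\K$ and $m_\K(\al)=m(\al)$; here I would invoke Table~\ref{deg56}, whose PARI/GP search is exhaustive because $m(\al)<1$ forces $\al$ to be a unit (Lemma~\ref{lem:nece}) and then~\eqref{eq:coeff bound} bounds its minimal polynomial. It records that for signature $(2,2)$ the least value of $m(\al)$ over sextic $\al$ is $0.946467\ldots$, attained only at roots of $x^6+x^2-1$, and that for signature $(4,1)$ no sextic $\al$ has $m(\al)<1$. For $d_1=1$ only $\al=\pm1$ is relevant, with $m_\K(\pm1)=1$. For $d_1=2$ the field $\Q(\al)$ is real quadratic; a place count gives $\|\al\|_\K^2=2\al_1^2+2\al_2^2$ when $\K$ has signature $(2,2)$ and $\|\al\|_\K^2\in\{2\al_1^2+3\al_2^2,\ 3\al_1^2+2\al_2^2\}$ when $\K$ has signature $(4,1)$, so in both cases $\|\al\|_\K^2\ge 2(\al_1^2+\al_2^2)\ge 6$ by Smyth's inequality~\eqref{eq:Smyth}, whence $m_\K(\al)\ge 3/2$ resp.\ $m_\K(\al)\ge 6/5$.

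The case $d_1=3$ splits by the signature of $\Q(\al)$, namely $(1,1)$ or $(3,0)$. If $\K$ has signature $(2,2)$: a $(1,1)$ cubic forces the real place of $\Q(\al)$ to split, so $\|\al\|_\K^2=2\|\al\|^2$ and $m_\K(\al)=m(\al)\ge(\vartheta^{-2}+\vartheta)/2=0.947279\ldots$ by Theorem~\ref{cubic}, while a totally real cubic forces exactly one real place of $\Q(\al)$ to split, giving $\|\al\|_\K^2=2\al_i^2+\al_j^2+\al_k^2\ge\al_1^2+\al_2^2+\al_3^2\ge 9/2$ by~\eqref{eq:Smyth}, so $m_\K(\al)\ge 9/8$. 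If $\K$ has signature $(4,1)$: a $(1,1)$ cubic is impossible (it would force $\K$ to be $(2,2)$ or $(0,3)$), so $\Q(\al)$ is totally real with exactly two of its three real places splitting, and $\|\al\|_\K^2=2\al_i^2+2\al_j^2+\al_k^2$ for some labelling $\{i,j,k\}=\{1,2,3\}$. I expect this last subcase to be the main obstacle, because Smyth's bound $\al_1^2+\al_2^2+\al_3^2\ge 9/2$ only gives $\|\al\|_\K^2\ge 9/2$, short of the $5$ needed for $m_\K(\al)\ge 1$; the plan is to close the gap by a dichotomy. If $\al_i^2+\al_j^2\ge 1/2$ then $\|\al\|_\K^2=(\al_1^2+\al_2^2+\al_3^2)+(\al_i^2+\al_j^2)\ge 9/2+1/2=5$; and if $\al_i^2+\al_j^2< 1/2$ then $\al_i^2\al_j^2< 1/16$, so $\al_k^2=\Nm(\al)^2/(\al_i^2\al_j^2)>16$ and $\|\al\|_\K^2\ge\al_k^2>5$. (An alternative is the elementary finite verification that an irreducible totally real cubic algebraic integer satisfies $\al_1^2+\al_2^2+\al_3^2\ge 5$.)

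Assembling these bounds: in signature $(2,2)$ every nonzero $\al\in\Z_\K$ has $m_\K(\al)\ge 0.946467\ldots$, with equality possible only for a sextic $\al$ that is a root of $x^6+x^2-1$ (which then generates $\K$), and since $\K=\Q(\zeta)$ for $\zeta$ such a root does attain $m(\K)=0.946467\ldots$ by Table~\ref{deg56}, part~(i) follows; in signature $(4,1)$ every nonzero $\al\in\Z_\K$ has $m_\K(\al)\ge 1$, which with $m(\K)\le1$ gives $m(\K)=1$, proving part~(ii). Apart from the degree-$3$ totally real subcase in signature $(4,1)$, every step is a routine place count followed by an appeal to~\eqref{eq:Smyth}, Theorem~\ref{cubic}, or Lemma~\ref{lem:nece} together with the completed search in Table~\ref{deg56}.
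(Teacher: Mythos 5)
Your proposal is correct, and its skeleton coincides with the paper's: reduce part (i) and part (ii) for sextic generators to the exhaustive PARI/GP search behind Table~\ref{deg56} (justified by Lemma~\ref{lem:nece} and \eqref{eq:coeff bound}), then dispose of elements $\al$ of degree $1$, $2$, $3$ by counting how each place of $\Q(\al)$ splits in $\K$ and applying \eqref{eq:mKa}. Where you genuinely diverge is in how the low-degree cases are closed. The paper handles the quadratic case by elementary Vieta-style estimates and, crucially, settles both totally real cubic subcases by two \emph{further} PARI/GP enumerations of cubics with bounded coefficients ($|a|\le 5, |b|\le 11, |c|\le 7$, resp.\ $|a|\le 6, |b|\le 14, |c|\le 11$). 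You replace all of this with Smyth's mean-value bound \eqref{eq:Smyth}, which gives $\al_1^2+\al_2^2+\al_3^2\ge 9/2$ and immediately kills the weight pattern $(2,1,1)$ in signature $(2,2)$; for the harder weight pattern $(2,2,1)$ in signature $(4,1)$ your dichotomy (either $\al_i^2+\al_j^2\ge 1/2$, giving $\|\al\|_\K^2\ge 9/2+1/2=5$, or else $|\al_i\al_j|<1/4$ forces $\al_k^2>16$ via the norm) is a clean, fully computation-free substitute and is the one genuinely non-routine step the paper avoids by machine search. A further small improvement on your side: for a $(1,1)$-cubic $\al$ inside a $(2,2)$-sextic $\K$ you correctly get $\|\al\|_\K^2=2\|\al\|^2$ (so $m_\K(\al)=m(\al)\ge(\vartheta^{-2}+\vartheta)/2=0.947279\ldots>0.946467\ldots$ by Theorem~\ref{cubic}), whereas the paper's displayed expression $2\al^2+2|\be|^2+2|\gamma|^2$ counts the complex pair twice; the paper's conclusion survives, but your accounting is the accurate one and is needed to see that this subcase really clears the threshold $0.946467\ldots$ rather than merely $1$. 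In short: same decomposition, but your treatment of the subfield cases is more conceptual and eliminates two of the paper's three computer verifications.
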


\begin{proof}
(i) 
First, for algebraic integers $\al$ of degree 6 with exactly two real conjugates, by 
Table~\ref{deg56}, we can find that 
the minimum of $\|\al\|$ is achieved only when $\al$ is a root of the polynomial $x^6+x^2-1$. 
It remains to consider algebraic integers $\al \in \K$ of degree 2 or 3. 

If $\al$ is of degree 2, then, since the signature $(s,t)$ is $(2,2)$, $\al$ must be real. 
Let $\be$ be the conjugate of $\al$. 
By~\eqref{eq:sutv} and~\eqref{eq:mKa}, we have 
$$
\| \al \|_{\K}^2 = 2\al^2 + 2\be^2 \ge 4|\al\be| \ge 4,
$$
which implies that $m_{\K}(\al)\ge 1$. 

Now, assume that $\al$ is of degree 3. 
Without loss of generality, we suppose that $\al$ is real. Let $\be,\gamma$ be the two conjugates of $\al$. If $\be,\gamma$ are complex, then by~\eqref{eq:sutv} and~\eqref{eq:mKa}, we obtain  
$$
\| \al \|_{\K}^2 = 2\al^2 + 2|\be|^2 +2|\gamma|^2 \ge 6|\al\be\gamma|^{2/3} \ge 6,
$$
which implies that $m_{\K}(\al)> 1$ again. 
On the other hand, If $\be,\gamma$ are real, then by~\eqref{eq:sutv} and~\eqref{eq:mKa}, we have 
$$
\| \al \|_{\K}^2 = 2\al^2 + \be^2 + \gamma^2
\mor  \| \al \|_{\K}^2 = \al^2 + 2\be^2 + \gamma^2 ,
$$
or
$$
\| \al \|_{\K}^2 = \al^2 + \be^2 + 2\gamma^2.
$$
If $\| \al \|_{\K}^2 < 2+2 =4$, then the absolute values of $\al,\be,\gamma$ are less than $2$, and so $\al$ must be a root of an irreducible cubic polynomial, which only has real roots, of the form  
$$
x^3+ax^2+bx+c \in \Z[x],
$$ 
where $|a|\le 5, |b|\le 11, |c| \le 7$. By checking all these polynomials using PARI/GP, we complete the proof of (i).

(ii) 
By Table~\ref{deg56}, we only need to consider algebraic integers $\al \in \K$ of degree 2 or 3. 

First, assume that $\al$ is of degree 2. Since the field $\K$ has only two complex embeddings, $\al$ must be real. Let $\be$ the conjugate of $\al$.  Then, by~\eqref{eq:sutv} and~\eqref{eq:mKa}, we have 
$$
\| \al \|_{\K}^2 = 2\al^2 + 3\be^2 
\mor  \| \al \|_{\K}^2 = 3\al^2 + 2\be^2. 
$$
Applying the same arguments as in the proof of Theorem~\ref{thm:quartic}, we obtain 
$m_{\K}(\al)>1$.  

Finally, assume that $\al$ is of degree 3. 
Similarly, all the conjugates of $\al$, say $\be$ and $\gamma$, must be real. Then, by~\eqref{eq:sutv} and~\eqref{eq:mKa}, we have 
$$
\| \al \|_{\K}^2 = \al^2 + 2\be^2 + 2\gamma^2
\mor  \| \al \|_{\K}^2 = 2\al^2 + \be^2 + 2\gamma^2 ,
$$
or
$$
\| \al \|_{\K}^2 = 2\al^2 + 2\be^2 + \gamma^2.
$$
If $\| \al \|_{\K}^2 < 4+1 =5$, then the absolute values of $\al,\be,\gamma$ are less than $\sqrt{5}$, and so $\al$ must be a root of an irreducible cubic polynomial, which only has real roots, of the form  
$$
x^3+ax^2+bx+c \in \Z[x],
$$ 
where $|a|\le 6, |b|\le 14, |c| \le 11$. By checking all these polynomials using PARI/GP, we conclude the proof. 
\end{proof}

We remark that, by Theorem~\ref{cubic1}, we have infinitely many sextic number fields $\K$ with signature $(2,2)$ and $m(\K) \leq 
(\vartheta+\vartheta^{-2})/2=0.947279\ldots$.

Note that, applying the same argument as in Theorem~\ref{cubic1} the example of Theorem~\ref{thm:sextic} implies that

\begin{theorem}\label{cubic11}
 For each even $s \geq 4$ there exist infinitely many number fields $\K$ of signature $(s,s)$ for which 
 \[
0.944940\ldots =3\cdot 2^{-5/3} \le 
 m(\K) \leq 
\frac{\zeta^2+\zeta^{-1}}{2}=0.946467\ldots,
\] 
where $\zeta=0.826031\ldots$ is the root of $x^6+x^2-1=0$. 
\end{theorem}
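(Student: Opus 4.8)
The plan is to mimic the proof of Theorem~\ref{cubic1} almost verbatim, replacing the cubic unit $\vartheta^{-1}$ (root of $x^3-x-1$) by a real root $\zeta$ of the sextic $x^6+x^2-1$. First I would record that $x^6+x^2-1$ is irreducible (this underlies Theorem~\ref{thm:sextic} and is reflected in Table~\ref{deg56}), so that $\zeta$ is an algebraic integer of degree $6$, and that $\Q(\zeta)$ has signature $(2,2)$: setting $u=x^2$, the two real roots of $x^6+x^2-1$ are $\pm\zeta$ with $\zeta^2$ the unique real root of $u^3+u-1$, and the remaining four roots are non-real. I would also need the value $m(\zeta)=\|\zeta\|^2/4=(\zeta^2+\zeta^{-1})/2=0.946467\ldots$, equivalently $\cR(\zeta)=2\zeta^2$ and $\cC(\zeta)=2\zeta^{-1}$; this is precisely Theorem~\ref{thm:sextic}(i), and it also follows directly from the fact that the product of the three roots of $u^3+u-1$ equals $1$, which forces the complex roots of $u^3+u-1$ to have modulus $\zeta^{-1}$, so that each complex-conjugate pair of roots of $x^6+x^2-1$ contributes $\zeta^{-1}$ to $\cC(\zeta)$.

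Then, for a fixed even $s\ge 4$, I would take $\beta$ to be a totally real algebraic number of degree $s/2$ over $\Q(\zeta)$ and also over $\Q$ (that is, with $\Q(\zeta)$ and $\Q(\beta)$ linearly disjoint over $\Q$), and set $\K=\Q(\zeta,\beta)$. Applying Lemma~\ref{subfield} with $(s_1,t_1)=(2,2)$ and $(s_2,t_2)=(s/2,0)$, formula~\eqref{cfrt1} shows that $\K$ has signature $(s_1s_2,\,s_1t_2+s_2t_1+2t_1t_2)=(s,s)$, and~\eqref{cfrt4}, which applies because $t_2=0$ and $\zeta$ is an algebraic integer, gives $m(\K)\le m(\zeta)=(\zeta^2+\zeta^{-1})/2=0.946467\ldots$. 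For the lower bound I would, exactly as in the proof of Theorem~\ref{cubic1}, substitute $s=t$ into~\eqref{eq:Bound mK}: since $s+2t=3s$, this yields $m(\K)\ge(2^{-2/3}+2^{1/3})/2=3\cdot 2^{-5/3}=0.944940\ldots$.

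Finally, to obtain infinitely many such fields I would invoke, as in the proof of Theorem~\ref{cubic1}, the classical fact that there are infinitely many totally real number fields of each prescribed degree (see~\cite{ABC} or~\cite{DS}); since $\Q(\zeta)$ has only finitely many subfields, all but finitely many totally real fields $\Q(\beta)$ of degree $s/2$ are linearly disjoint from $\Q(\zeta)$, and the corresponding $\K=\Q(\zeta,\beta)$ form infinitely many distinct fields of signature $(s,s)$ satisfying the stated inequalities. I do not expect a genuine obstacle here: the only point requiring a moment's attention is the linear-disjointness bookkeeping when $s=4$ (where $\Q(\beta)$ runs over real quadratic fields and one simply discards the finitely many quadratic subfields of $\Q(\zeta)$), and everything else is a transcription of the proof of Theorem~\ref{cubic1}.
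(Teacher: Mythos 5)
Your proposal is correct and follows essentially the same route as the paper: start from $\zeta$ with $m(\zeta)=(\zeta^2+\zeta^{-1})/2$ (Theorem~\ref{thm:sextic}(i)), adjoin a totally real $\beta$ of degree $s/2$ over both $\Q$ and $\Q(\zeta)$, and apply \eqref{cfrt1} and \eqref{cfrt4} of Lemma~\ref{subfield} for the signature $(s,s)$ and the upper bound, with the lower bound read off from \eqref{eq:Bound mK} at $s=t$. The paper handles the "infinitely many $\beta$" point with the same brevity you do, citing the existence of infinitely many totally real fields of each degree.
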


\begin{proof}
Begin with the field $\Q(\zeta)$ of degree $6$ and take a totally real algebraic number $\beta$ of degree $s/2$ over $\Q(\zeta)$ and also over $\Q$. Then, by~\eqref{cfrt1} and~\eqref{cfrt4}, $\K=\Q(\zeta, \beta)$ is the field of degree $3s$ with signature $(s,s)$ for which the inequality $m(\K) \leq m(\zeta) \leq 0.946467\ldots$ holds. This implies the result, since
there are infinitely many such $\be$.
\end{proof}

\section{Concluding questions}

In conclusion, we pose some questions related to Lemmas~\ref{lem:d and N} and~\ref{lem:nece} and als 
to Theorems~\ref{kiy} and~\ref{thm:sextic} which might be of interest.

\begin{quest}
\label{quest:ma<1}
 Is it true that $m(\alpha) < 1$ for a non-zero algebraic integer $\alpha$ is only possible when $\alpha$ is an algebraic unit?
 \end{quest}

\begin{quest}
\label{quest:min m}
 Is $$m(\zeta) = 
\frac{\zeta^2+\zeta^{-1}}{2}=0.946467\ldots,$$ where 
$\zeta=0.826031\ldots$ is the root of $x^6+x^2-1=0$, the smallest value of $m(\al)$ when $\al$ runs through all non-zero algebraic integers?
\end{quest}

The authors, on the basis of some very limited numerical tests,  believe that the answers to Questions~\ref{quest:ma<1} and~\ref{quest:min m}
are positive. Unfortunately exhaustive testing of these two and similar questions is 
infeasible. For example, for Question~\ref{quest:ma<1}, 
by Lemma~\ref{lem:nece}, one has to work with polynomials of degree $n \ge 24$. 

We remark that, by Theorem~\ref{kiy}, the inequality~\eqref{derf1} of Lemma~\ref{lem:d and N} 
on the absolute size
$$
\|\alpha\|^2 > n 2^{s/n-1}
$$
is sharp   for  $s=2$. The authors are unaware of any other 
positive integers $s$ for which it is 
sharp on the set of algebraic integers $\al$ of degree $n$ with $s$ real conjugates.

\begin{quest}
\label{quest:sharp} Is there a positive integer $s \ne 2$ such that for any 
$\eps>0$ there exists a non-zero algebraic integer $\alpha$ of degree $n$ (depending on $s$ and $\eps$) with exactly $s$ real conjugates for which the inequality
$$\|\alpha\|^2 < n 2^{s/n-1}+\eps$$
holds?
\end{quest}

By~\eqref{kiol1}, \eqref{susm} and~\eqref{kiol3}, if such an algebraic unit of degree $n=s+2t$ exists, it should have $s$ real conjugates close to the points
$\pm 2^{-t/n}$ and $2t$ complex conjugates close to the circle $|z|=2^{s/(2n)}$. Some restrictions on the number of real conjugates $s$ of an algebraic unit $\al$ whose $2t$
complex conjugates all lie {\it exactly} on the circle $|z|=R$ have been recently described in
\cite[Theorem~1]{new} in an entirely different context.

\section*{Acknowledgements} 
The authors would like to thank the referee for very careful reading and valuable comments. 
The research of the second and third authors was supported by the Australian
Research Council Grant DP130100237.

\end{document}